\newcommand{\R}{\mathbb R}
\newcommand{\C}{\mathbb C}
\newcommand{\F}{\mathbf {F}}
\newcommand{\G}{\mathbf G}
\newcommand{\V}{\mathbf V}
\newcommand{\E}{\mathbf E}
\newtheorem{numbered}{}[section]
\newtheorem{theo}[numbered]{Theorem}
\newtheorem{prop}[numbered]{Proposition}
\newtheorem{defi}[numbered]{Definition}
\newtheorem{lem}[numbered]{Lemma}
\newtheorem{rem}[numbered]{Remark}
\newtheorem{cor}[numbered]{Corollary}
\newtheorem{ex}[numbered]{Example}
\numberwithin{equation}{section}
\newcommand{\theoref}[1]{Theorem~\ref{#1}}
\newcommand{\chapref}[1]{Chapter~\ref{#1}}
\newcommand{\defiref}[1]{Definition~\ref{#1}}
\newtheorem{exple}{\textbf{Example}}[section]
\newenvironment{proof}
	{\textit{\textbf{Proof.}}}
	{\hfill $\blacksquare$\vskip 8pt}
\renewcommand{\@chapapp}{}
\titleformat{\chapter}[hang]{\bf\huge}{\thechapter}{2pc}{}
\newlength{\plarg}
\newlength{\glarg}
\DeclareMathOperator{\rank}{\mathrm{rank}}
\DeclareMathOperator{\rot}{\mathrm{rot}}
\DeclareMathOperator{\dive}{\mathrm{div}}
\DeclareMathOperator{\grad}{\mathrm{grad}}
\DeclareMathOperator{\spa}{\mathrm{span}}
\begin{document}

\pagenumbering{roman}
\pagestyle{plain}


\begin{titlepage}
\noindent

\begin{center}
\begin{minipage}{\plarg}
 \centering
{\bfseries \Large{Lectures on Finslerian geometry }}\\
\vspace{1.5cm}
{\bfseries  Oumar Wone}\\ \vspace{1cm}
\end{minipage}
\end{center}
\end{titlepage}

\renewcommand{\baselinestretch}{1.15}\small\normalsize


\newpage
\pagenumbering{arabic}
\chapter*{Introduction}
One can trace back the birth of Finslerian geometry to the thesis of Riemann in $1854$. There he introduced the general concept of pseudo-Riemannian manifolds and thereby generalizing previous work by Gauss in the case of surfaces. Pseudo-Riemannian manifolds are manifolds $\mathbf{M}$ with a smoothly varying symmetric bilinear "scalar" product (or metric) on the tangent space $T_x\mathbf{M}$ at each point. Each such metric is given in local coordinates by an expression of the form $\sum a_{ij}dx_idx_j$, homogeneous of degree $2$ in the $dx_i$s. Pseudo-riemannian geometry is the study of pseudo-Riemannian manifolds up to appropriate isomorphisms. After Riemann in $1918$ Finsler, a student of Caratheodory, introduced the notion of Finslerian metric on the tangent space at each point of the manifold $\mathbf{M}$. These are given by expression of the form $F(x,dx)$ in local coordinates, homogeneous of degree $1$ in $dx$ and smoothly varying. He used his concept principally to study the calculus of variations, ie the problem of the extremum of an expression of the form $\int F(x,y,y^\prime)$ for $F$ and $y$ sufficiently differentiable maps belonging to some given function spaces. After the work of Caratheodory and his school, the study of Finslerian manifolds was given a new impetus in $1933$ by E. Cartan who, using the "absolute differential calculus" of Ricci, Bianchi, Levi-Civita linked the two concepts of Finslerian and pseudo-Riemannian manifolds, and hence the corresponding geometries. Finally one can say the modern presentation of Finslerian geometry is due to S. S. Chern and his school.

\tableofcontents
\newpage
\chapter{Calculus of variations}
\label{ch1}
\noindent
\section{Review of differential calculus on Banach spaces}
Our main reference in this chapter is \citep{hcartan}. Let $(\E,\|.\|)$ be a real Banach space. Let $I=\left[a,b\right]\subset\R$. Consider
$$\V:=\{\varphi:I\to \E, \varphi\in \mathscr{C}^1(I,\E)\}$$
equipped with the norm
$$\varphi\to\|\varphi\|=\|\varphi\|_\infty+\|\varphi^\prime\|_\infty$$
with $\displaystyle\|\varphi\|_\infty:=\sup_{t\in I}\|\varphi(t)\|_\E$ which exists because $t\in I\to \|\varphi(t)\|_\E$ is continuous and $I$ is compact. We adopt a similar definition for $\|\varphi^\prime\|_\infty$.
\begin{rem}
What is really $\varphi^\prime$? Recall that if $U$ is an open set in a Banach space $\F$ and $h:U\to \G$ with $\G$ a Banach space, then $h$ is said to be differentiable at $a\in U$ if there exists an $\R$-linear continuous application $L_a:\F\to \G$ such that
$$h(a+u)=h(a)+L_a(u)+\varepsilon(u)$$
with $\displaystyle\lim_{\|u\|\to 0}\dfrac{\varepsilon(u)}{\|u\|}=0$. One often denotes such an $L_a$ by the symbol $h^\prime(a)$. One has $h^\prime(a)\in\mathcal{L}(\F,\G)$, the space of continuous $\R$-linear maps from $\F$ to $\G$. In our situation we have $\varphi:I\to \E$ so our $\varphi^\prime(a)\in\mathcal{L}(\R,\E)\cong \E$.
\end{rem}
Hence $(\V,\|.\|)$ is a normed $\R$-vector space. We have more
\begin{prop}
$(\V,\|.\|)$ is a normed Banach vector space.
\end{prop}
\begin{proof}
Let $(\varphi_n)$ be a Cauchy sequence in $V$. Then $\exists$ N: $\forall n,m\geqslant N$
$$\|\varphi_n-\varphi_m\|_\infty\leqslant\|\varphi_n-\varphi_m\|.$$
Thus $(\varphi_n)$ is a Cauchy sequence with respect to the sup-norm and similarly for $(\varphi_n^\prime)$. Hence as $(\mathcal{C}(I),\|\|_\infty)$ ($\mathcal{C}(I)$ is the space of continuous maps from $I\to \mathbf{E}$) is a Banach space $(\varphi_n)$ converges uniformly to $\varphi\in\mathcal{C}(I)$ and similarly $(\varphi_n^\prime)$ converges uniformly to $\psi\in\mathcal{C}(I,\mathcal{L}(\R,\E)\cong \E)$. This means that $\varphi$ is differentiable and $\varphi^\prime=\psi$. Therefore $\phi:I\to \E\in \mathscr{C}^1(I,\E)$.  
\end{proof}

Let $(\E,\|\|)$ be a Banach space. Let $U\subset\R\times \E\times \E$ be an open set.
Consider
\begin{equation}
\begin{split}
F&:U\to R\\
(t,x,y)&\mapsto F(t,x,y)
\end{split}
\end{equation}
of class $\mathscr{C}^k$, $k³1$. Take $\varphi\in \mathscr{C}^1(I,\E)$ such that for all $t\in I$, $(t,\varphi(t),\varphi^\prime(t))\in U$. We define the functional
$$f(\varphi)=\int_a^bF(t,\varphi(t),\varphi^\prime(t))dt$$
and also $\Omega=\{\varphi\in \mathbf{V}, (t,\varphi(t),\varphi^\prime(t))\in U, \forall t\in I\}$. As a consequence we have an operator
\begin{equation}
\begin{split}
f&:\Omega\to\R\\
&\phi\mapsto f(\varphi)=\int_a^bF(t,\varphi(t),\varphi^\prime(t))dt.
\end{split}
\end{equation}
\begin{prop}
$\Omega$ is an open set in V.
\end{prop}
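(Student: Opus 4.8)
The plan is to show that $\Omega$ contains an open ball around each of its points. Fix $\varphi\in\Omega$ and introduce the evaluation curve $\gamma\colon I\to\R\times\E\times\E$ defined by $\gamma(t)=(t,\varphi(t),\varphi'(t))$. Since $\varphi\in\mathscr{C}^1(I,\E)$, both $\varphi$ and $\varphi'$ are continuous, hence $\gamma$ is continuous; and because $I$ is compact, its image $K:=\gamma(I)$ is a compact subset of $\R\times\E\times\E$. By the very definition of $\Omega$ we have $K\subset U$. I would equip $\R\times\E\times\E$ with the norm $\|(t,x,y)\|=|t|+\|x\|+\|y\|$, which is a Banach-space norm inducing the product topology.

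The central step is to produce a uniform tube of some radius $\delta>0$ around $K$ that still lies inside $U$. Because $U$ is open, its complement $C:=(\R\times\E\times\E)\setminus U$ is closed and, by $K\subset U$, disjoint from $K$. The function $p\mapsto d(p,C)=\inf_{q\in C}\|p-q\|$ is $1$-Lipschitz, hence continuous, and it is strictly positive at every point of $K$ (strict positivity follows from $C$ being closed together with $K\cap C=\emptyset$). As $K$ is compact, this continuous positive function attains a minimum $\delta>0$ on $K$. Consequently every point of $\R\times\E\times\E$ lying within distance $\delta$ of $K$ must belong to $U$.

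Finally I would verify that the open ball $B(\varphi,\delta)$ of $\V$ is contained in $\Omega$. Let $\psi\in\V$ with $\|\psi-\varphi\|<\delta$, and set $\gamma_\psi(t)=(t,\psi(t),\psi'(t))$. For each $t\in I$,
$$\|\gamma_\psi(t)-\gamma(t)\|=\|\psi(t)-\varphi(t)\|+\|\psi'(t)-\varphi'(t)\|\leqslant\|\psi-\varphi\|_\infty+\|\psi'-\varphi'\|_\infty=\|\psi-\varphi\|<\delta.$$
Thus $\gamma_\psi(t)$ lies within distance $\delta$ of the point $\gamma(t)\in K$, so $\gamma_\psi(t)\in U$ for every $t\in I$, which is exactly the assertion that $\psi\in\Omega$. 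Hence $B(\varphi,\delta)\subset\Omega$, and since $\varphi\in\Omega$ was arbitrary, $\Omega$ is open in $\V$.

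The delicate point, and the only place where compactness is genuinely used, is the positivity of $\delta$. In the possibly infinite-dimensional Banach space $\R\times\E\times\E$ one cannot invoke local finiteness or sequential compactness of neighborhoods; it is precisely the compactness of the continuous image $K=\gamma(I)$, which survives in infinite dimensions, together with the closedness of $C$, that forces $d(K,C)>0$. Passing from the naive pointwise statement ``each $\gamma(t)$ has a neighborhood in $U$'' to the uniform radius $\delta$ is therefore the step that carries the whole argument, and the product norm above is what converts that uniform radius into a bound on $\|\psi-\varphi\|$ in the $\mathscr{C}^1$-norm of $\V$.
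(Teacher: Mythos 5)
Your proof is correct and follows essentially the same route as the paper: consider the compact image $K$ of $t\mapsto(t,\varphi(t),\varphi'(t))$, extract a uniform radius $\delta>0$ around $K$ inside $U$, and check that the ball $B(\varphi,\delta)\subset\V$ maps into this tube via the $\mathscr{C}^1$-norm. The only difference is that where the paper simply invokes ``general topology'' for the existence of the uniform radius $\rho$, you actually prove it, using the $1$-Lipschitz distance function to the closed complement and its positive minimum on $K$ --- a welcome filling-in of the cited fact rather than a different argument.
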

\begin{proof}
Take $\varphi_0\in\Omega$. It induces a map
\begin{equation}
\begin{split}
&I\to U\subset \R\times \E\times \E\\
&t\mapsto (t,\varphi_0(t),\varphi_0^\prime(t)).
\end{split}
\end{equation}
Consider $K$ the compact of $U$ which is the image of $I$ by the latter map. From general topology (because $U$ is open) one knows that there exists $\rho>0$ such that if $(t,x,y)\in I\times \E\times \E$ verifies
$$\begin{cases}
   \|x-\varphi_0(t)\|_\E<\rho\\
    \|y-\varphi^\prime_0(t)\|_\E<\rho
\end{cases} $$ 
then $(t,x,y)\in U$. Choose $\varphi\in \mathbf{V}$ such that $\|\varphi-\varphi_0\|<\rho$ (i.e. $\varphi\in B(\varphi_0,\rho)$ the ball of center $\varphi_0$ and of radius $\rho$). Then for each $t\in I$ 
$$\begin{cases}
   \|\varphi(t)-\varphi_0(t)\|_\E<\rho\\
    \|\varphi^\prime(t)-\varphi^\prime_0(t)\|_\E<\rho
\end{cases}$$
hence $(t,\varphi(t),\varphi^\prime(t))\in U,\,\forall t\in I$. And this means $B(\varphi_0,\rho)\subset \Omega$ which is therefore open.
\end{proof}
\begin{prop}
Assume $F$ is $\mathscr{C}^k,\, k³1$ then $\forall u\in V$
$$f^\prime(\varphi)\cdot u=\int_a^b\dfrac{\partial F}{\partial x}(t,\varphi(t),\varphi^\prime(t))\cdot u(t)dt+\int_a^b\dfrac{\partial F}{\partial y}(t,\varphi(t),\varphi^\prime(t))\cdot u^\prime(t)dt.$$
\end{prop}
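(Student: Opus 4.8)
The plan is to show that $f$ is Fréchet differentiable at a fixed $\varphi\in\Omega$ and that its derivative is exactly the functional $L_\varphi(u)$ defined by the right-hand side. First I would check that $L_\varphi$ is a legitimate candidate, i.e. a continuous linear functional on $\V$. Linearity in $u$ is immediate. For continuity, note that $t\mapsto(t,\varphi(t),\varphi'(t))$ is a continuous map from the compact interval $I$ into $U$, so its image $K$ is compact, and the maps $t\mapsto\partial_x F(t,\varphi(t),\varphi'(t))$ and $t\mapsto\partial_y F(t,\varphi(t),\varphi'(t))$ are continuous into $\mathcal L(\E,\R)$, hence bounded on $I$. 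This gives $|L_\varphi(u)|\le (b-a)\big(\|\partial_x F\|_\infty\,\|u\|_\infty+\|\partial_y F\|_\infty\,\|u'\|_\infty\big)\le C\,\|u\|$, so $L_\varphi\in\mathcal L(\V,\R)$.

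Next I would use the preceding proposition (openness of $\Omega$) to fix $\rho>0$ with $B(\varphi,\rho)\subset\Omega$, together with the tube property established there: if $\|h\|_\E<\rho$ and $\|k\|_\E<\rho$, then $(t,\varphi(t)+h,\varphi'(t)+k)\in U$. Consequently, for $\|u\|<\rho$ the whole segment $s\mapsto(t,\varphi(t)+su(t),\varphi'(t)+su'(t))$, $s\in[0,1]$, remains in $U$, so the real function $g_t(s):=F(t,\varphi(t)+su(t),\varphi'(t)+su'(t))$ is $\mathscr{C}^1$ and the fundamental theorem of calculus yields, writing $\partial_x F_{t,s},\partial_y F_{t,s}$ for the partials evaluated at $(t,\varphi(t)+su(t),\varphi'(t)+su'(t))$,
$$F(t,\varphi+u,\varphi'+u')-F(t,\varphi,\varphi')=\int_0^1\big(\partial_x F_{t,s}\cdot u(t)+\partial_y F_{t,s}\cdot u'(t)\big)\,ds.$$
Integrating over $I$ and subtracting $L_\varphi(u)$ expresses the remainder $\mathcal R(u):=f(\varphi+u)-f(\varphi)-L_\varphi(u)$ as the double integral over $I\times[0,1]$ of $(\partial_x F_{t,s}-\partial_x F_{t,0})\cdot u(t)+(\partial_y F_{t,s}-\partial_y F_{t,0})\cdot u'(t)$, the interchange of the two integrations being harmless since the integrand is jointly continuous.

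To finish I would introduce the transversal modulus of continuity
$$\omega(\eta):=\sup\{\,\|\partial_x F(t,x',y')-\partial_x F(t,x,y)\|+\|\partial_y F(t,x',y')-\partial_y F(t,x,y)\|\ :\ (t,x,y)\in K,\ \|x'-x\|_\E\le\eta,\ \|y'-y\|_\E\le\eta\,\}.$$
Since $\|su(t)\|_\E\le\|u\|$ and $\|su'(t)\|_\E\le\|u\|$ for $s\in[0,1]$, this gives the bound $|\mathcal R(u)|\le (b-a)\,\omega(\|u\|)\,\|u\|$, and it remains only to prove $\omega(\eta)\to0$ as $\eta\to0$. This is the step I expect to be the main obstacle: because $\E$ is infinite dimensional, no tube around $K$ is compact, so Heine--Borel is unavailable and one cannot simply invoke uniform continuity. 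Instead I would argue by contradiction using the sequential compactness of $K$: if $\omega(\eta_n)\ge\varepsilon_0>0$ for some $\eta_n\to0$, choose witnesses $(t_n,x_n,y_n)\in K$ and perturbations $(x_n',y_n')$ with $\|x_n'-x_n\|_\E\le\eta_n$, $\|y_n'-y_n\|_\E\le\eta_n$; pass to a subsequence with $(t_n,x_n,y_n)\to(t_*,x_*,y_*)\in K$, observe that the perturbed points converge to the same limit, and use the continuity of $\partial_x F$ and $\partial_y F$ at $(t_*,x_*,y_*)$ to force the difference to $0$, contradicting $\varepsilon_0>0$. With $\omega(\|u\|)\to0$ we obtain $\mathcal R(u)=o(\|u\|)$, which by the definition of the Fréchet derivative recalled in the Remark identifies $f'(\varphi)=L_\varphi$ and thereby the claimed formula.
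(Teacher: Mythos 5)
Your proof is correct, but it takes a genuinely different route from the paper. The paper factors the work through a general lemma on differentiation under the integral sign for parameter-dependent integrals with the parameter in a Banach space: it sets $\lambda(\varphi,t)=F(t,\varphi(t),\varphi^\prime(t))$, so that $f(\varphi)=\int_a^b\lambda(\varphi,t)\,dt$, and obtains $\frac{\partial\lambda}{\partial\varphi}\cdot u=\frac{\partial F}{\partial x}\cdot u(t)+\frac{\partial F}{\partial y}\cdot u^\prime(t)$ by the chain rule, using that the evaluation maps $\varphi\mapsto\varphi(t)$, $\varphi\mapsto\varphi^\prime(t)$ are continuous linear on $\V$; the formula then drops out of the lemma. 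You instead attack the Fr\'echet quotient directly: candidate functional $L_\varphi$, the tube of radius $\rho$ around $K$ borrowed from the openness proof of $\Omega$, the fundamental theorem of calculus along the segment $s\mapsto(t,\varphi(t)+su(t),\varphi^\prime(t)+su^\prime(t))$, and a remainder bound $|\mathcal R(u)|\le(b-a)\,\omega(\|u\|)\,\|u\|$. The trade-off is instructive. The paper's modular route is shorter at the point of application and, via the corollary on higher derivatives of parameter integrals, gives the $\mathscr{C}^k$ regularity of $f$ with no extra effort; your route only establishes differentiability at each fixed $\varphi$ (which is all the displayed statement asks), and continuity of $\varphi\mapsto f^\prime(\varphi)$ would need a further, similar estimate. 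On the other hand, your argument is self-contained and, more importantly, it makes fully explicit the one genuinely delicate analytic point that the paper's lemma partly glosses: in an infinite-dimensional $\E$ no tube around the compact curve $K$ is compact, so the needed uniform smallness of $\partial_xF$, $\partial_yF$ oscillations transversal to $K$ cannot be had from Heine--Borel on a neighborhood; your sequential-compactness contradiction argument (witnesses $(t_n,x_n,y_n)\in K$ with perturbations of size $\eta_n\to0$, subsequence converging in $K$, continuity of the partials at the limit) is exactly the right repair, and it is the same mechanism that is silently required to justify the uniform-in-$t$ step inside the paper's lemma. One cosmetic remark: no interchange of integrals is actually needed in your remainder computation, since integrating the one-dimensional FTC identity in $t$ already produces the iterated integral you estimate.
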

\begin{rem}
If $\varphi\in\Omega$ and if $f:\Omega\to \R$, $f^\prime(\varphi)\in\mathcal{L}(\V,\R)$. So if $u\in \V$ then $f^\prime(\varphi)(u)\in\R$. Moreover if $\varphi:I\to E$ then $\varphi(t)\in \mathbf{E}$ and $\varphi^\prime(t)\in\mathcal{L}(\R,\E)\cong \mathbf{E}$. It follows that
$$\begin{cases}
   &\dfrac{\partial F}{\partial x}(t,\varphi(t),\varphi^\prime(t))\in\mathcal{L}(\E,\R)\\\\
   &\dfrac{\partial F}{\partial y}(t,\varphi(t),\varphi^\prime(t))\in\mathcal{L}(\E,\R)
   \end{cases}$$
   so the expressions $\dfrac{\partial F}{\partial x}(t,\varphi(t),\varphi^\prime(t))\cdot u(t)$ and $\dfrac{\partial F}{\partial y}(t,\varphi(t),\varphi^\prime(t))\cdot u^\prime(t)$ are well-defined.
\end{rem}
We now begin the proof of the previous proposition; to this end we need the following classical lemma.
\begin{lem}
Let $\E,\, \F$ two Banach spaces and $I=\left[a,b\right]$. Consider a mapping 
$$\varphi:U\times \E\to \F,\,U\subset \E,\,U\,open.$$
For each $x\in U$ we define $\psi(x)=\int_a^b\varphi(x,t)dt$ then $\psi:U\to \F$ is continuous. Moreover if $\dfrac{\partial\varphi(x,t)}{\partial x}$ exists for each $(x,t)\in U\times I$ and as a map in $Map(U\times I,\mathcal{L}(\E,\F))$ is continuous then $\psi$ is of class $\mathscr{C}^1$ and
$$\psi^\prime(x)=\int^b_a\dfrac{\partial\varphi(x,t)}{\partial x}.$$ 
\end{lem}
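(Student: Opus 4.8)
The plan is to establish the two assertions in turn --- continuity of $\psi$, then its $\mathscr{C}^1$ character --- both resting on a single device. Since the parameter domain $U$ sits in a possibly infinite-dimensional Banach space, I cannot use compactness of a neighbourhood of a point $x_0\in U$; instead all uniformity will be extracted from the compactness of $I=[a,b]$ through a tube-lemma argument. (The domain of $\varphi$ should of course read $U\times I$, since the integration is over $t\in I$.)

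For the continuity of $\psi$ at a fixed $x_0\in U$ I would start from
$$\|\psi(x)-\psi(x_0)\|\leqslant\int_a^b\|\varphi(x,t)-\varphi(x_0,t)\|\,dt\leqslant(b-a)\sup_{t\in I}\|\varphi(x,t)-\varphi(x_0,t)\|,$$
so that it suffices to show the supremum tends to $0$ as $x\to x_0$. Given $\varepsilon>0$, continuity of $\varphi$ at each $(x_0,t_0)$ provides $\delta_{t_0}>0$ and an open interval $J_{t_0}\ni t_0$ on which $\|\varphi(x,t)-\varphi(x_0,t_0)\|<\varepsilon/2$ whenever $\|x-x_0\|<\delta_{t_0}$; specializing to $x=x_0$ and combining via the triangle inequality upgrades this to $\|\varphi(x,t)-\varphi(x_0,t)\|<\varepsilon$ on the same range. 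Compactness of $I$ extracts a finite subcover $J_{t_1},\dots,J_{t_n}$, and $\delta=\min_i\delta_{t_i}$ then bounds the supremum by $\varepsilon$ uniformly in $t$.

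For differentiability, set $g=\partial\varphi/\partial x$ and propose $\psi^\prime(x_0)=L:=\int_a^b g(x_0,t)\,dt\in\mathcal{L}(\E,\F)$. Applying the Banach-space fundamental theorem of calculus to $s\mapsto\varphi(x_0+sh,t)$, whose derivative is $g(x_0+sh,t)\,h$, yields the identity
$$\varphi(x_0+h,t)-\varphi(x_0,t)-g(x_0,t)h=\int_0^1\big[g(x_0+sh,t)-g(x_0,t)\big]h\,ds,$$
valid for all $t\in I$ once $\|h\|$ is small enough that the segment $x_0+sh$ stays in the open set $U$. Hence the integrand is bounded in norm by $\|h\|\sup_{s\in[0,1]}\|g(x_0+sh,t)-g(x_0,t)\|$. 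Running the tube-lemma argument of the previous paragraph on the continuous map $g$ (now valued in the Banach space $\mathcal{L}(\E,\F)$) gives, for each $\varepsilon>0$, a $\delta>0$ with $\|g(x_0+sh,t)-g(x_0,t)\|<\varepsilon$ simultaneously for all $s\in[0,1]$ and $t\in I$ whenever $\|h\|<\delta$, using $\|x_0+sh-x_0\|\leqslant\|h\|$. Integrating gives $\|\psi(x_0+h)-\psi(x_0)-Lh\|\leqslant(b-a)\varepsilon\|h\|$, so $\psi^\prime(x_0)=L$; and $\psi^\prime$ is continuous by the first part applied to $g$, whence $\psi\in\mathscr{C}^1$.

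I expect the tube-lemma step --- the uniform vanishing of the suprema over $t\in I$ --- to be the only real obstacle, precisely because $\E$ need not be finite-dimensional and compactness must be borrowed from $I$ alone; the remaining ingredients are the existence of the Riemann integral of a continuous Banach-valued function on a compact interval and the fundamental theorem of calculus, both classical.
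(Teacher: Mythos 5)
Your proof is correct and follows essentially the same route as the paper's: uniform-in-$t$ estimates extracted from the compactness of $I$ by a finite-subcover (Borel--Lebesgue, tube-lemma) argument, followed by a mean-value-type bound on $\varphi(x_0+h,t)-\varphi(x_0,t)-\dfrac{\partial\varphi}{\partial x}(x_0,t)\cdot h$ which is then integrated over $I$. The only cosmetic differences are that you obtain this bound via the fundamental theorem of calculus along the segment $s\mapsto x_0+sh$ where the paper invokes the multidimensional mean-value theorem, and that you spell out (correctly) the uniformity in $t$ of the derivative estimate, which the paper asserts without repeating the compactness argument.
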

\begin{proof}
Let $\varepsilon>0$ be given. For each $(x,t)\in U\times I$, $\exists\,\eta(x,t)>0$ such that $\|\varphi(x^\prime,t^\prime)-\varphi(x,t)\|<\dfrac{\varepsilon}{2}$ for
$$\begin{cases}
   &\|x-x^\prime\|<\eta(x,t)\\
   &|t^\prime-t|<\eta(x,t).
   \end{cases}$$
   In particular $\|\varphi(x,t^\prime)-\varphi(x,t)\|<\dfrac{\varepsilon}{2}$ for $|t^\prime-t|<\eta(x,t)$. Hence for $|t^\prime-t|<\eta(x,t)$ and $|x^\prime-x|<\eta(x,t)$ we have $\|\varphi(x^\prime,t^\prime)-\varphi(x,t)\|<\varepsilon$ (add and substract $\varphi(x,t)$ to the expression inside the norm and use the triangle inequality). Now the family of intervals $\{]-\eta(x,t),t+\eta(x,t)[,t\in I\}$ cover $I$ which is a compact. Hence by the Borel-Lebesgue lemma it admits a finite sub-covering by such open intervals $]t_i-\eta(x,t_i),t_i+\eta(x,t_i)[$, say. Then for any $t^\prime\in I$, $\exists t_i\in I:\,|t^\prime-t_i|<\eta(x,t_i)$ and then $\|\varphi(x^\prime,t^\prime)-\varphi(x,t)\|<\varepsilon$. By taking the integral over $I$ one gets the required continuity. 
   
   Now assume that $\dfrac{\partial \varphi}{\partial x}$ exists and is continuous as a map from $U\times I$ to $\mathcal{L}(\E,\F)$. By the previous calculation the map $\lambda:\,\lambda(x)=\int_a^b\dfrac{\partial \varphi}{\partial x}(x,t)dt$ is continuous. We are going to show that $\psi$ is differentiable and that $\psi^\prime(x)=\lambda(x)$. By continuity of the map $\dfrac{\partial \varphi}{\partial x}$ (by hypothesis), to each $\varepsilon>0$ we can associate $\eta_x>0$ such that for each $t\in I$ and $h\in\E$ such that $x+h\in U$ one has
   $$\|\dfrac{\partial \varphi}{\partial x}(x+h,t)-\dfrac{\partial \varphi}{\partial x}(x,t)\|²\varepsilon$$
   for $\|h\|<\eta_x$. By the multidimensional mean-value theorem
   $$\| \varphi(x+h,t)-\varphi(x,t)-\dfrac{\partial \varphi}{\partial x}(x,t)\|²\varepsilon\|h\|,\,\|h\|²\eta_x$$
   hence by integrating one has
  \begin{equation}
  \begin{split}
  \| \psi(x+h,t)-\psi(x,t)-\dfrac{\partial \psi}{\partial x}(x,t)\|&²\int_a^b\| \varphi(x+h,t)-\varphi(x,t)-\lambda(x)\cdot h\|dt\\
  &²\varepsilon\|h\|(b-a).
  \end{split}
  \end{equation}
  Hence $\psi$ is differentiable.
\end{proof}
\begin{cor}
If moreover $\dfrac{\partial^n\varphi}{\partial x^n}$ exists and is continuous from $U\times I\to \mathcal{L}_n(\E,\F)$ then $\psi$ is of class $\mathscr{C}^n$ and $$\psi^{(n)}=\int_a^b\dfrac{\partial^n\varphi}{\partial x^n}(x,t)dt.$$
\end{cor}
\begin{rem}
We have $f:U\subset \E\to \F$, $U\ni a$, an open set. Hence for all $a\in U$ $f^\prime(a)\in\mathcal{L}(\E,\F)$. Therefore we can define a map $f^\prime:U\to \mathcal{L}(\E,\F)$ in the obvious way. Similarly $f^{\prime\prime}(a)=(f^\prime)^\prime(a)\in\mathcal{L}(\E,\mathcal{L}(\E,\F))\cong\mathcal{L}(\E\times \E,\F)$. Thus inductively we set $\mathcal{L}_n(\E,\F):=\mathcal{L}(\E,\mathcal{L}_{n-1}(\E,\F))\cong\mathcal{L}(\E^n,\F)$
\end{rem}
Let us come back to the proof of the proposition
\begin{prop}
With the previous notations if $F$ is of class $\mathscr{C}^k,\,k³1$ then so is $f$ and for each $u$ in $\V$ one has 
$$f^\prime(\varphi)\cdot u=\int_a^b\dfrac{\partial F}{\partial x}(t,\varphi(t),\varphi^\prime(t))\cdot u(t)dt+\int_a^b\dfrac{\partial F}{\partial y}(t,\varphi(t),\varphi^\prime(t))\cdot u^\prime(t)dt.$$
\end{prop}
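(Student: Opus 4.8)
The plan is to read $f$ as an integral depending on a parameter and to invoke the differentiation-under-the-integral Lemma, taking the \emph{parameter} to be $\varphi$ itself and the integration variable to be $t$. Concretely, set
$$\Phi:\Omega\times I\to\R,\qquad \Phi(\varphi,t)=F(t,\varphi(t),\varphi^\prime(t)),$$
so that $f(\varphi)=\int_a^b\Phi(\varphi,t)\,dt$. The first observation is that, for each fixed $t\in I$, the evaluation maps $\varphi\mapsto\varphi(t)$ and $\varphi\mapsto\varphi^\prime(t)$ are continuous linear maps $\V\to\E$ of norm $\leqslant 1$, since $\|\varphi(t)\|_\E\leqslant\|\varphi\|_\infty\leqslant\|\varphi\|$ and likewise for $\varphi^\prime(t)$. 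Hence $\varphi\mapsto(t,\varphi(t),\varphi^\prime(t))$ is continuous and affine, and $\Phi(\cdot,t)$ is $\mathscr{C}^k$ as the composite of the $\mathscr{C}^k$ map $F$ with this affine map; the joint continuity of $\Phi$ on $\Omega\times I$ (hence continuity of $f$, from the first part of the Lemma) follows from continuity of $F$ together with the uniform-in-$t$ estimates just noted.

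Next I would compute the partial derivative of $\Phi$ with respect to $\varphi$ by the chain rule. Since the increment of $\varphi\mapsto(t,\varphi(t),\varphi^\prime(t))$ in the direction $u$ is $(0,u(t),u^\prime(t))$, one gets
$$\dfrac{\partial\Phi}{\partial\varphi}(\varphi,t)\cdot u=\dfrac{\partial F}{\partial x}(t,\varphi(t),\varphi^\prime(t))\cdot u(t)+\dfrac{\partial F}{\partial y}(t,\varphi(t),\varphi^\prime(t))\cdot u^\prime(t),$$
which is exactly the integrand of the asserted formula. If the Lemma applies, then $f^\prime(\varphi)=\int_a^b\frac{\partial\Phi}{\partial\varphi}(\varphi,t)\,dt$, and the first-derivative claim is proved.

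The main obstacle is precisely the verification of the continuity hypothesis of the Lemma for $\frac{\partial\Phi}{\partial\varphi}$, and here one must be careful. Because the formula contains the term $u^\prime(t)$, the map $t\mapsto\frac{\partial\Phi}{\partial\varphi}(\varphi,t)$ is \emph{not} continuous from $I$ into $\mathcal{L}(\V,\R)$ for the operator norm: already for $\E=\R$ and $F(t,x,y)=y$ the functional reduces to $u\mapsto u^\prime(t)$, and its operator-norm distance between two instants $t_1,t_2$ stays bounded below as $t_1\to t_2$. So the naive ``joint continuity'' fails and one cannot quote the Lemma verbatim. What does hold, and what the differentiability step in the Lemma's proof genuinely uses, is continuity in $\varphi$ \emph{uniform in} $t$: letting $K=\{(t,\varphi(t),\varphi^\prime(t)):t\in I\}$, a compact subset of $U$, the continuity of $\frac{\partial F}{\partial x}$ and $\frac{\partial F}{\partial y}$ together with the compactness of $K$ yields, by a Borel--Lebesgue covering argument as in the first part of the Lemma, a uniform modulus of continuity of these partials along $K$. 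Using $\|u(t)\|,\|u^\prime(t)\|\leqslant\|u\|$ this gives $\sup_{t\in I}\|\frac{\partial\Phi}{\partial\varphi}(\varphi+w,t)-\frac{\partial\Phi}{\partial\varphi}(\varphi,t)\|_{\mathcal{L}(\V,\R)}\to 0$ as $\|w\|\to 0$, which both feeds the mean-value step of the Lemma and makes $\varphi\mapsto\int_a^b\frac{\partial\Phi}{\partial\varphi}(\varphi,t)\,dt$ continuous; hence $f\in\mathscr{C}^1$ with the stated derivative.

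Finally, for the $\mathscr{C}^k$ assertion I would iterate. The expression for $f^\prime(\varphi)$ has the same shape as $f$, namely an integral over $I$ of a $\mathscr{C}^{k-1}$ function of $(t,\varphi(t),\varphi^\prime(t))$ paired with linear evaluation data, so an induction on $k$ invoking the Corollary (with the same uniform-continuity remedy at each stage, the higher partials of $F$ being $\mathscr{C}^{k-2},\dots$) gives $f\in\mathscr{C}^k$ and exhibits the successive derivatives as integrals of the partials of $F$. I expect the continuity subtlety discussed above to be the only real difficulty; everything else is the chain rule and the uniform continuity of continuous maps along compact sets.
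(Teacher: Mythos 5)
Your proposal takes exactly the paper's route: the paper likewise introduces $\lambda(\varphi,t)=F(t,\varphi(t),\varphi^\prime(t))$, writes $f(\varphi)=\int_a^b\lambda(\varphi,t)\,dt$, invokes the differentiation-under-the-integral lemma with $\varphi$ as the parameter, and obtains the integrand from the chain rule applied to the composite of $F$ with $(\varphi,t)\mapsto(t,\varphi(t),\varphi^\prime(t))$; the $\mathscr{C}^k$ case is then handled, as you do, by iterating via the Corollary (the paper records this in the remark that follows its proof). Where you genuinely improve on the paper is the continuity step. The paper simply asserts that $\dfrac{\partial\lambda}{\partial\varphi}$ ``is continuous,'' which, read as joint continuity of $(\varphi,t)\mapsto\dfrac{\partial\lambda}{\partial\varphi}(\varphi,t)$ into $\mathcal{L}(\V,\R)$ with the operator norm --- the hypothesis the lemma literally demands --- is false, exactly as your counterexample shows: already for $\E=\R$ and $F(t,x,y)=y$ the operator is $u\mapsto u^\prime(t)$, and a thin bump in $u^\prime$ keeps the operator-norm distance between nearby instants bounded below. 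Your substitute hypothesis --- continuity in $\varphi$ uniform in $t$, extracted from the uniform continuity of $\dfrac{\partial F}{\partial x}$ and $\dfrac{\partial F}{\partial y}$ on a neighborhood of the compact $K=\{(t,\varphi(t),\varphi^\prime(t)):t\in I\}$ --- is precisely what the mean-value estimate in the lemma's proof consumes, and it simultaneously yields the continuity of $\varphi\mapsto f^\prime(\varphi)$. So your write-up is correct, follows the same architecture, and in addition repairs a real imprecision that the paper's own proof glosses over.
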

\begin{proof}
We first show that if $F$ is of class $\mathscr{C}^1$, then $f$ has derivative $f^\prime$ given by
$$f^\prime(\varphi)\cdot u=\int_a^b\dfrac{\partial F}{\partial x}(t,\varphi(t),\varphi^\prime(t))\cdot u(t)dt+\int_a^b\dfrac{\partial F}{\partial y}(t,\varphi(t),\varphi^\prime(t))\cdot u^\prime(t)dt.$$
This will imply that the map $\varphi\mapsto f^\prime(\varphi)$ from $\Omega$ to $\mathcal{L}(\V,\R)$ is continuous hence $f$ is of class $\mathscr{C}^1$. Introduce
$$\lambda:\Omega\times I\to \R,\,\lambda(\varphi,t)=F(t,\phi(t),\varphi^\prime(t)).$$
We then have
$$f(\varphi)=\int_a^b\lambda(\varphi,t)dt.$$ 
By the previous lemma if $\dfrac{\partial\lambda}{\partial\varphi}$ exists and is continuous as a function on $\Omega\times I$ then $f^\prime$ exists and is given by
$$f^\prime(\varphi)\cdot u=\int_a^b\dfrac{\partial\lambda}{\partial\varphi}(\varphi,t)\cdot udt.$$
Therefore we must show the existence of $\dfrac{\partial\lambda}{\partial\varphi}$ and compute it. Using the fact that $F$ is of class $\mathscr{C}^1$ and the fact that $\lambda$ can be written as the composition of the two $\mathscr{C}^1$ maps $(\varphi,t)\mapsto (t,\varphi(t),\varphi^\prime(t))$ and $F$. We get 
$$\dfrac{\partial\lambda}{\partial\varphi}\cdot u=\dfrac{\partial F}{\partial x}(t,\varphi(t),\varphi^\prime(t))\cdot u(t)+\dfrac{\partial F}{\partial y}(t,\varphi(t),\varphi^\prime(t))\cdot u^\prime(t)$$
which gives that $\dfrac{\partial\lambda}{\partial\varphi}$ is continuous and ends the proof.
\end{proof}
\begin{rem}
If $\lambda(\varphi,t)$ is of class $\mathscr{C}^k$ so is $\int_a^b\lambda(\varphi,t)dt$.
\end{rem}
\section{The minimum problem}
Let $\mathbf E$ be a Banach space. $\V$ denotes the previously defined set. Fix two points $\alpha,\beta\in \E$ and consider the set
$$W(\alpha,\beta)=\{\varphi:I\to \E,\mathscr{C}^1:\varphi(a)=\alpha,\varphi(b)=\beta\}.$$
In other words this is the set of $\mathscr{C}^1$ maps from $I$ to $\E$ with fixed end points $\alpha$ and $\beta$. One sees that $W(\alpha,\beta)$ is an affine space with associated vector space $W(0,0)$. Indeed by definition
$$W(\alpha,\beta)=\{\varphi:I\to \E,\mathscr{C}^1:\varphi(a)=0,\varphi(b)=0\}.$$
Introduce the following linear operators on $\V$, $\delta_a:\varphi\mapsto\varphi(a)$ and similarly for $\delta_b$. Then we have $W(0,0)=\ker\delta_a\cap\ker\delta_b$. Moreover for any $\varphi_0,\varphi_1\in W(\alpha,\beta)$, $\varphi_1-\varphi_0\in W(0,0)$ which gives at once the fact that $W(\alpha,\beta)$ is affine with associated vector space $W(0,0)$. Moreover one has $W(\alpha,\beta)=\varphi_0+W(0,0)$ ($\varphi_1$ could have been taken instead of $\varphi_0$). As a consequence the translation $\psi\mapsto\varphi_0+\psi$ induces an homeomorphism between $W(\alpha,\beta)$ and $W(0,0)$. This finally allows us to equip each such a $W(\alpha,\beta)$ with $(\alpha,\beta)\not=(0,0)$, with a distance with respect to which it is complete by defining $$d(\varphi_0+\psi_0,\varphi_0+\psi_1)=\|\psi_0-\psi_1\|$$ with $\|\cdot\|$ the previously defined norm in $\V$.

Recall that
$$W(\alpha,\beta)=\{\varphi:I\to \E,\mathscr{C}^1:\varphi(a)=\alpha,\varphi(b)=\beta\}.$$
 \begin{theo}
 $\varphi\in W(\alpha,\beta)$ is extremal if and only if 
 $$\displaystyle\int_a^b\left[\dfrac{\partial F}{\partial x}(t,\varphi(t),\varphi^\prime(t))\cdot u(t)+\dfrac{\partial F}{\partial y}(t,\varphi(t),\varphi^\prime(t))\cdot u^\prime(t)\right]dt=0$$
 for each $u:I\to \E$, $\mathscr{C}^1$, $u(a)=u(b)=0$.
 \end{theo}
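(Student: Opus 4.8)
The plan is to deduce the statement directly from the affine structure of $W(\alpha,\beta)$ together with the explicit formula for $f^\prime(\varphi)$ established in the preceding proposition. Here $\varphi$ being \emph{extremal} means that it is a critical (stationary) point of the restriction of $f$ to the affine space $W(\alpha,\beta)$, i.e. that the differential of $f|_{W(\alpha,\beta)}$ at $\varphi$ is the zero map; the entire argument consists in rewriting this abstract condition as the concrete integral identity.

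First I would record the linear picture already set up above. We have $W(\alpha,\beta)=\varphi+W(0,0)$, an affine subspace of $\V$ directed by
$$W(0,0)=\ker\delta_a\cap\ker\delta_b=\{u\in\V:\ u(a)=u(b)=0\},$$
which is a \emph{closed} linear subspace of $\V$ since $\delta_a,\delta_b$ are continuous linear maps. The translation $\psi\mapsto\varphi+\psi$ identifies $W(\alpha,\beta)$ with the Banach space $W(0,0)$, so the tangent space to $W(\alpha,\beta)$ at any of its points is canonically $W(0,0)$: the admissible variations are exactly the $\mathscr{C}^1$ maps $u$ vanishing at $a$ and $b$.

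Next I would exploit that $f$ is of class $\mathscr{C}^1$ on the open set $\Omega\subset\V$ (previous proposition), so its restriction to the relatively open set $\Omega\cap W(\alpha,\beta)$ of the affine space $\varphi+W(0,0)$ is again $\mathscr{C}^1$, and the differential of this restriction at $\varphi$ is $f^\prime(\varphi)$ restricted to $W(0,0)$. Concretely, for fixed $u\in W(0,0)$ the curve $s\mapsto\varphi+su$ stays in $W(\alpha,\beta)$ (because $u(a)=u(b)=0$) and in $\Omega$ for small $s$, and by the chain rule $\tfrac{d}{ds}\big|_{0}f(\varphi+su)=f^\prime(\varphi)\cdot u$. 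Hence $\varphi$ is extremal if and only if $f^\prime(\varphi)\cdot u=0$ for every $u\in W(0,0)$. Substituting the formula
$$f^\prime(\varphi)\cdot u=\int_a^b\dfrac{\partial F}{\partial x}(t,\varphi(t),\varphi^\prime(t))\cdot u(t)\,dt+\int_a^b\dfrac{\partial F}{\partial y}(t,\varphi(t),\varphi^\prime(t))\cdot u^\prime(t)\,dt$$
from the preceding proposition, valid for every $u\in\V$ and in particular for every $u\in W(0,0)$, yields precisely the asserted integral condition.

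The only delicate point is the claim that the differential of the restricted functional coincides with the restriction of $f^\prime(\varphi)$ to the tangent directions $W(0,0)$; this is where closedness of $W(0,0)$ and openness of $\Omega$ are used, so that $f$ composed with the affine chart $\psi\mapsto\varphi+\psi$ is genuinely differentiable on a neighbourhood of $0$ in $W(0,0)$ with differential $u\mapsto f^\prime(\varphi)\cdot u$. Once this is granted the equivalence is a direct unwinding of the definition of a critical point, and no further computation beyond the substitution above is needed.
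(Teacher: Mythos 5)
Your proof is correct and is exactly the argument the paper intends: the theorem is stated in the text without any proof, immediately after the affine structure $W(\alpha,\beta)=\varphi_0+W(0,0)$ and the formula for $f^\prime(\varphi)$ have been established, and your write-up simply makes explicit the unwinding of ``critical point of $f$ restricted to $W(\alpha,\beta)$'' via the directional derivative $\frac{d}{ds}\big|_{s=0}f(\varphi+su)=f^\prime(\varphi)\cdot u$ for $u\in W(0,0)$. Nothing is missing; your attention to the continuity of $\delta_a,\delta_b$ (hence closedness of $W(0,0)$) and to the openness of $\Omega$ covers precisely the points the paper leaves tacit.
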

 
 We make a small transformation. We set
 $$A(t):=\dfrac{\partial F}{\partial x}(t,\varphi(t),\varphi^\prime(t))\quad\&\quad B(t):=\dfrac{\partial F}{\partial y}(t,\varphi(t),\varphi^\prime(t))$$
 and suppose that $u(t)$ and $u^\prime(t)\in \mathbf E$ for all $t\in I$.
 Thus we must show that
 $$\displaystyle\int_a^b\left[A(t)\cdot u(t)+B(t)\cdot u^\prime(t)\right]dt=0.$$
\begin{theo}
The previous equation holds if and only if $B^\prime(t)$ exists and $B^\prime(t)=A(t)$.
\end{theo}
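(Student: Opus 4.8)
The plan is to prove the two implications separately, the reverse implication being a routine integration by parts and the forward implication being the real substance: it is the fundamental lemma of the calculus of variations in its du~Bois--Reymond form, adapted to the Banach-valued setting.

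First I would dispatch the ``if'' direction. Suppose $B$ is differentiable with $B^\prime(t)=A(t)$. Since the evaluation pairing $\mathcal{L}(\E,\R)\times\E\to\R$, $(\ell,x)\mapsto\ell(x)$, is continuous bilinear, the product rule gives $\frac{d}{dt}\bigl(B(t)\cdot u(t)\bigr)=B^\prime(t)\cdot u(t)+B(t)\cdot u^\prime(t)=A(t)\cdot u(t)+B(t)\cdot u^\prime(t)$. Integrating over $I$ and applying the fundamental theorem of calculus,
$$\int_a^b\bigl[A(t)\cdot u(t)+B(t)\cdot u^\prime(t)\bigr]\,dt=\bigl[B(t)\cdot u(t)\bigr]_a^b=B(b)\cdot u(b)-B(a)\cdot u(a)=0,$$
because every admissible $u$ satisfies $u(a)=u(b)=0$.

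For the ``only if'' direction, the key device is to integrate away the $A$-term. Set $C(t):=\int_a^t A(s)\,ds$; since $A$ is continuous (being a composition of continuous maps, as $F$ is $\mathscr{C}^k$ with $k\geqslant 1$), $C$ is of class $\mathscr{C}^1$ with $C^\prime(t)=A(t)$. Integration by parts, together with $u(a)=u(b)=0$, turns $\int_a^b A(t)\cdot u(t)\,dt$ into $-\int_a^b C(t)\cdot u^\prime(t)\,dt$, so the hypothesis collapses to
$$\int_a^b\bigl[B(t)-C(t)\bigr]\cdot u^\prime(t)\,dt=0\qquad\text{for every }u:I\to\E,\ \mathscr{C}^1,\ u(a)=u(b)=0.$$
Writing $D:=B-C$, I would then observe that as $u$ ranges over $W(0,0)$ its derivative $u^\prime$ ranges exactly over the continuous maps $v:I\to\E$ with $\int_a^b v=0$ (given such a $v$, the primitive $u(t)=\int_a^t v$ is admissible). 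Thus the problem reduces to showing that a continuous $D:I\to\mathcal{L}(\E,\R)$ annihilating every mean-zero continuous $\E$-valued field under the pairing must be constant.

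The heart of the argument, and the step I expect to be the main obstacle, is this du~Bois--Reymond conclusion in the Banach-valued setting, where $D(t)$ lives in the dual $\mathcal{L}(\E,\R)$ while the test functions live in $\E$. I would reduce to the scalar case by freezing a direction: fix $e\in\E$, put $g(t):=D(t)\cdot e$ (a continuous scalar function) with mean $\bar g:=\frac{1}{b-a}\int_a^b g$, and feed in the admissible test field $v(t)=(g(t)-\bar g)\,e$, which is continuous with $\int_a^b v=0$. The vanishing hypothesis then reads $\int_a^b(g(t)-\bar g)\,g(t)\,dt=0$, and since $\int_a^b(g(t)-\bar g)\,dt=0$ this equals $\int_a^b(g(t)-\bar g)^2\,dt$; hence $g\equiv\bar g$ by continuity. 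As $e$ was arbitrary, $D(t)\cdot e$ is independent of $t$ for every $e$, so $D$ is a constant element $k\in\mathcal{L}(\E,\R)$. Finally $B(t)=C(t)+k$ is of class $\mathscr{C}^1$ and $B^\prime(t)=C^\prime(t)=A(t)$, which is exactly the assertion.
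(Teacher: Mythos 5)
Your proof is correct, and it takes a genuinely different --- and in fact stronger --- route than the paper. The paper proves sufficiency exactly as you do, but for necessity it explicitly sidesteps the hard part: it says ``for simplicity reason we assume that $B^\prime(t)$ exists,'' integrates by parts to reduce to $\int_a^b(A(t)-B^\prime(t))\cdot u(t)\,dt=0$, and then invokes the fundamental lemma of Lagrange type (a continuous $C:I\to\mathcal{L}(\E,\R)$ with $\int_a^bC(t)\cdot u(t)\,dt=0$ for all admissible $u$ vanishes identically), proved by contradiction with an explicit $\mathscr{C}^\infty$ bump function $\lambda(t)u_0$. So the paper actually establishes only the weaker claim ``if the equation holds \emph{and} $B^\prime$ exists, then $B^\prime=A$,'' whereas the theorem as stated asserts the existence of $B^\prime$ as part of the conclusion. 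Your du Bois--Reymond argument delivers precisely this missing content: by introducing the primitive $C(t)=\int_a^tA(s)\,ds$ and shifting the problem to $\int_a^b\bigl[B(t)-C(t)\bigr]\cdot u^\prime(t)\,dt=0$, you only need to show $D=B-C$ is constant, and your identification of $\{u^\prime:u\in W(0,0)\}$ with the mean-zero continuous fields is exact. Your scalar reduction --- freezing $e\in\E$, testing against $v(t)=(g(t)-\bar g)e$, and using $\int_a^b(g-\bar g)g\,dt=\int_a^b(g-\bar g)^2\,dt$ --- is a clean way to avoid bump functions altogether, and it adapts to the dual-valued setting with no extra work since $D(t)\cdot e=D(a)\cdot e$ for every $e$ forces $D(t)=D(a)$ in $\mathcal{L}(\E,\R)$. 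What each approach buys: the paper's bump-function lemma is a reusable pointwise-vanishing statement (and is stated separately as a lemma there), but it cannot by itself produce the differentiability of $B$; your argument is self-contained, needs only continuity of $A$ and $B$, requires no smooth bump, and proves the full equivalence as stated, concluding $B=C+k$ is $\mathscr{C}^1$ with $B^\prime=A$.
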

Here is another formulation
\begin{theo}
A function $\varphi\in W(\alpha,\beta)$ is extremal if and only if the mapping $\dfrac{\partial F}{\partial y}(t,\varphi(t),\varphi^\prime(t))$ is differentiable with respect to $t\in I$ and 
$$\displaystyle\dfrac{d}{dt}\dfrac{\partial F}{\partial y}(t,\varphi(t),\varphi^\prime(t))=\dfrac{\partial F}{\partial x}(t,\varphi(t),\varphi^\prime(t)).$$
These equations are called the Euler-Lagrange equations for the problem of extremum.
\end{theo}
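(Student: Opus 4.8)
The plan is to obtain the Euler--Lagrange equations as a direct synthesis of the two preceding theorems, so that no new analytic machinery is required. First I would record that, by the first theorem of this section, a function $\varphi\in W(\alpha,\beta)$ is extremal precisely when
$$\int_a^b\left[A(t)\cdot u(t)+B(t)\cdot u^\prime(t)\right]dt=0$$
holds for every $u:I\to\E$ of class $\mathscr{C}^1$ with $u(a)=u(b)=0$, where I abbreviate $A(t)=\dfrac{\partial F}{\partial x}(t,\varphi(t),\varphi^\prime(t))$ and $B(t)=\dfrac{\partial F}{\partial y}(t,\varphi(t),\varphi^\prime(t))$, both viewed as elements of $\mathcal{L}(\E,\R)$. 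This reduces the ``extremal'' side of the equivalence to a purely integral condition on the fixed continuous maps $A$ and $B$.

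Next I would invoke the second theorem, which asserts that this integral identity is equivalent to the statement that $B$ is differentiable on $I$ with $B^\prime(t)=A(t)$ for all $t\in I$. Substituting back the definitions of $A$ and $B$ then yields exactly
$$\dfrac{d}{dt}\dfrac{\partial F}{\partial y}(t,\varphi(t),\varphi^\prime(t))=\dfrac{\partial F}{\partial x}(t,\varphi(t),\varphi^\prime(t)),$$
which is the asserted Euler--Lagrange equation. Both implications follow simultaneously, since each of the two steps is a genuine equivalence; thus $\varphi$ is extremal if and only if $t\mapsto\dfrac{\partial F}{\partial y}(t,\varphi(t),\varphi^\prime(t))$ is differentiable and satisfies the displayed identity.

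The one point that deserves emphasis, rather than any computational obstacle, is that the differentiability of $t\mapsto\dfrac{\partial F}{\partial y}$ is \emph{concluded}, not hypothesized: a priori $\varphi$ is only $\mathscr{C}^1$, so $B$ is merely continuous, yet extremality forces it to be $\mathscr{C}^1$ with the prescribed derivative. This regularity gain is the real content, and it is supplied entirely by the second theorem (a du Bois--Reymond type argument); once that is in hand the present statement is a formal consequence. I would therefore keep the proof short, taking care only to check that $A$ and $B$ indeed take values in $\mathcal{L}(\E,\R)$, so that the hypotheses of the second theorem apply verbatim in the present Banach-valued setting.
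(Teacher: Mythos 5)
Your synthesis is logically sound and matches the paper's own architecture: the paper introduces this theorem with the words ``here is another formulation,'' i.e.\ it too regards the statement as the conjunction of the two preceding theorems, with $A(t)=\dfrac{\partial F}{\partial x}(t,\varphi(t),\varphi^\prime(t))$ and $B(t)=\dfrac{\partial F}{\partial y}(t,\varphi(t),\varphi^\prime(t))$ viewed in $\mathcal{L}(\E,\R)$, exactly as you set things up. The difference is that the paper does not treat the second theorem as a citable black box: the proof attached to the present statement \emph{is} the proof of that equivalence. Sufficiency is shown just as you indicate (if $B^\prime=A$ then $A\cdot u+B\cdot u^\prime=\frac{d}{dt}(B\cdot u)$ integrates to $B(b)\cdot u(b)-B(a)\cdot u(a)=0$), and necessity is proved via the bump-function form of the fundamental lemma: if $C:I\to\mathcal{L}(\E,\R)$ is continuous and $\int_a^b C(t)\cdot u(t)\,dt=0$ for all admissible $u$, then $C\equiv 0$, applied to $C=A-B^\prime$. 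Where your account diverges from the paper is precisely on the point you single out as the real content. The paper's necessity argument opens with ``for simplicity reason we assume that $B^\prime(t)$ exists'': the differentiability of $B$ is there \emph{hypothesized}, not concluded. So the full du Bois--Reymond strength you invoke --- extremality forcing $B$ to be $\mathscr{C}^1$ --- is stated in the second theorem but never actually proved anywhere in the paper, and since the only proof the paper offers for that theorem is the very proof under discussion, citing it to obtain the regularity gain is circular within this text. Your proof is correct modulo that citation; to make it self-contained you would need to supply the standard du Bois--Reymond device (set $\tilde A(t)=\int_a^t A(s)\,ds$, integrate by parts to reduce the hypothesis to $\int_a^b\bigl(B(t)-\tilde A(t)\bigr)\cdot u^\prime(t)\,dt=0$ for all admissible $u$, and conclude that $B-\tilde A$ is constant, whence $B\in\mathscr{C}^1$ with $B^\prime=A$), an argument which neither your proposal nor the paper carries out.
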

\begin{proof}
If $A(t)=B^\prime(t)$ then
\begin{equation}
\begin{split}
&\displaystyle\int_a^b\left(B^\prime\cdot u(t)+B(t)\cdot u^\prime(t)\right)dt=\int_a^b\dfrac{d}{dt}(B(t)\cdot u(t))dt\\
&B(b)\cdot u(b)-B(a)\cdot u(a)=0.
\end{split}
\end{equation}
So the condition $A(t)=B^\prime(t)$ is sufficient. Let us prove that it is also necessary and for simplicity reason we assume that $B^\prime(t)$ exists. The function $t\mapsto B(t)\cdot u(t)$ is differentiable and because $u(a)=u(b)=0$ we have
\begin{equation}
\begin{split}
\displaystyle\int_a^b\left(B^\prime\cdot u(t)+B(t)\cdot u^\prime(t)\right)dt&=\left[B(t)\cdot u(t)\right]_a^b+\int_a^b(A(t)-B^\prime(t))\cdot u(t))dt\\
&=0.
\end{split}
\end{equation}
So $\displaystyle\int_a^b(A(t)-B^\prime(t))\cdot u(t))dt=0$ for any $u:I\to \E$ of class $\mathscr{C}^1$ and with $u(a)=u(b)=0$. Our goal is to deduce from this condition that $A(t)=B^\prime(t)$.
\begin{lem}
Let $C:I\to\mathcal{L}(\E,\R)$ be continuous and be such that $$\displaystyle\int_a^bC(t)\cdot u(t)dt=0$$ for each $u:I\to E$ of class $\mathscr{C}^1$ and with $u(a)=u(b)=0$. Then $C\equiv 0$.
\end{lem}
\begin{proof}
The proof is almost similar to the classical case. We assume by contradiction that $C­\not=0$. Then there exists $a<t_0<b$ with $C(t_0)­\not=0$. As $C(t_0)\in\mathcal{L}(\E,\R)$, there exists $u_0\in \E$: $C(t_0)\cdot u_0\not=­0$. We can assume that $C(t_0)\cdot u_0>0$.

By the continuity of $t\mapsto C(t)$, there exists $\varepsilon>0$ such that $\left[t_0-\varepsilon,t_0+\varepsilon\right]\subset\left[a,b\right]$ with $C(t)\cdot u_0>0$ on $\left[t_0-\varepsilon,t_0+\varepsilon\right]$.
Now choose a bump function $\lambda:I\to\R^{+}$, $\mathscr{C}^\infty$ with $supp\lambda\subset\left[t_0-\varepsilon,t_0+\varepsilon\right]$. For instance one make take for $\lambda$
$$\displaystyle\lambda(t)=\begin{cases}
      0&a\leqslant t\leqslant t_0-\varepsilon \text{ }, \\
     0& t_0+\varepsilon\leqslant t\leqslant b\text{}\\
     e^{-\frac{1}{(t_0+\varepsilon-t)(t-t_0+\varepsilon)}}&\, t\in]t_0-\varepsilon,t_0+\varepsilon[.
\end{cases}$$
We consider $u$ with $u(t)=\lambda(t)u_0$. Then for all $t\in I$ we obtain that $$C(t)\cdot u(t)=C(t)\cdot(\lambda(t)u_0)>0.$$ As a consequence we have
$$\displaystyle\int_a^bC(t)\cdot u(t)dt>0.$$
And this is a contradiction.
\end{proof}
Hence the theorem.
\end{proof}
\section{Euler-Lagrange equations in finite dimension}
Set $\E=\R^n$. We have $F(t,x_1,\cdots x_n,y_1,\cdots,y_n)$ of class $\mathscr{C}^k$ on $U\subset\R^{2n+1}$.

A function $\varphi:I\to\R^n$ of class $\mathscr{C}^1$ is given by its $n$ components $\varphi_i,\,1²i²n$, each of class $\mathscr{C}^1$ on $I$.
The mapping $\varphi\in\Omega$ if and only if $$(t,\varphi_1(t),\cdots,\varphi_n(t),\varphi_1^\prime(t),\cdots,\varphi_n^\prime(t))\in U$$
and we define the action
$$\displaystyle f(\varphi)=\int_a^bF(t,\varphi_1(t),\cdots,\varphi_n(t),\varphi_1^\prime(t),\cdots,\varphi_n^\prime(t))dt.$$
In this setting the Euler-Lagrange equations are 
$$\displaystyle\dfrac{d}{dt}\dfrac{\partial F}{\partial y_i}=\dfrac{\partial F}{\partial x_i},\quad 1\leqslant i\leqslant n.$$
More explicitly we have, by the chain-rule, 
\begin{equation}
\begin{split}
\displaystyle &\dfrac{d}{dt}\dfrac{\partial F}{\partial y_i}(t,\varphi_1(t),\cdots,\varphi_n(t),\varphi_1^\prime(t),\cdots,\varphi_n^\prime(t))=\dfrac{\partial^2F}{\partial t\partial y_i}(t,\varphi(t),\varphi^\prime(t))+\\
&\sum_{j=1}^n\dfrac{\partial^2F}{\partial x_j\partial y_i}\varphi_j^\prime(t)+\sum_{j=1}^n\dfrac{\partial^2F}{\partial y_j\partial y_i}\varphi_j^{\prime\prime}(t),\quad1\leqslant i\leqslant n.
\end{split}
\end{equation}
Therefore the Euler-Lagrange equations are equivalent to the system of second order differential equations for the $n$ unknown functions $x_1,\cdots,x_n$ of the variable $t$ given by
\begin{equation}
\begin{split}
&\displaystyle\dfrac{\partial^2F}{\partial t\partial x_i^\prime}(t,x,x^\prime)+\sum_{j=1}^n\dfrac{\partial^2F}{\partial x_j\partial x_i^\prime}(t,x,x^\prime)x_j^\prime+\sum_{j=1}^n\dfrac{\partial^2F}{\partial x_j^{\prime}\partial x_i^\prime}(t,x,x^\prime)x_j^{\prime\prime}\\
&=\dfrac{\partial F}{\partial x_i}(t,x,x^\prime),\,\,\,1\leqslant i\leqslant n.
\end{split}
\end{equation}
Under the non-degeneracy condition $\det\left(\dfrac{\partial^2F}{\partial x_j^{\prime}\partial x_i^\prime}\right)\not=­0$ on $U$ we can express the second derivative of the $x_i$s in terms of the remaining variables leading to equations of the form
$$\displaystyle x_i^{\prime\prime}=G_i(t,x,x^\prime),\,1\leqslant i \leqslant n$$
with the $G_i$s of class $\mathscr{C}^{k-2}$ if $F$ is of class $\mathscr{C}^k$.
\begin{prop}
If the determinant $\det\left(\dfrac{\partial^2F}{\partial x_j^{\prime}\partial x_i^\prime}\right)\not=­0$ and if the given $F$ is of class $\mathscr{C}^2$ then all the $x_i=\varphi_i(t)$ which satisfy the extremum condition are of class $\mathscr{C}^2$. 
\end{prop}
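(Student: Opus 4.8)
The plan is to exploit the non-degeneracy hypothesis through the inverse (equivalently, implicit) function theorem so as to recover one extra derivative for $\varphi$. Write $y=(y_1,\dots,y_n)$ for the velocity variables (the $x_i^\prime$ appearing in the statement), and along a given extremal $\varphi$ set
$$
B(t):=\dfrac{\partial F}{\partial y}\bigl(t,\varphi(t),\varphi^\prime(t)\bigr)\in\R^n,\qquad
A(t):=\dfrac{\partial F}{\partial x}\bigl(t,\varphi(t),\varphi^\prime(t)\bigr).
$$
First I would record the regularity already in hand. By the Euler--Lagrange theorem proved above, an extremal $\varphi$ satisfies $B^\prime(t)=A(t)$; moreover, since $F$ is $\mathscr{C}^2$ the partial $\partial F/\partial x$ is continuous, and $\varphi,\varphi^\prime$ are continuous, so $A$ is continuous. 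Hence $B$ is not merely differentiable but of class $\mathscr{C}^1$ on $I$. This is the only place where the extremal condition is used.

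Next I would linearize in the velocity. Consider the map
$$
\Phi:U\to\R^{2n+1},\qquad \Phi(t,x,y)=\Bigl(t,x,\dfrac{\partial F}{\partial y}(t,x,y)\Bigr),
$$
which is of class $\mathscr{C}^1$ because $F$ is $\mathscr{C}^2$. Its Jacobian is block triangular,
$$
\begin{pmatrix}
1 & 0 & 0\\
0 & I_n & 0\\
\partial_t\partial_y F & \partial_x\partial_y F & \partial_y\partial_y F
\end{pmatrix},
$$
so its determinant is exactly $\det\left(\dfrac{\partial^2F}{\partial x_j^\prime\partial x_i^\prime}\right)$, which is nonzero by hypothesis. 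By the inverse function theorem $\Phi$ is a local $\mathscr{C}^1$ diffeomorphism about every point of $U$, and its inverse necessarily has the form $(t,x,p)\mapsto(t,x,\Psi(t,x,p))$ with $\Psi$ of class $\mathscr{C}^1$; that is, the relation $p=\partial F/\partial y(t,x,y)$ can be solved locally and smoothly for $y=\Psi(t,x,p)$.

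Finally I would substitute along the extremal. Fix $t_0\in I$. In a neighborhood of $\bigl(t_0,\varphi(t_0),\varphi^\prime(t_0)\bigr)$ the velocity is determined by $\varphi^\prime(t)=\Psi\bigl(t,\varphi(t),B(t)\bigr)$, since $B(t)=\partial F/\partial y(t,\varphi(t),\varphi^\prime(t))$ and $\Psi$ inverts this relation (uniqueness of the local inverse forces the two expressions to agree). The right-hand side is the composition of the $\mathscr{C}^1$ map $\Psi$ with $t\mapsto(t,\varphi(t),B(t))$, whose components are all $\mathscr{C}^1$ by the first step. Hence $\varphi^\prime$ is $\mathscr{C}^1$ near $t_0$; as $t_0$ was arbitrary and $\mathscr{C}^1$-regularity is a local property, $\varphi^\prime$ is $\mathscr{C}^1$ on all of $I$, i.e. $\varphi$ is $\mathscr{C}^2$.

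The main obstacle is conceptual rather than computational: one must resist differentiating the second-order Euler--Lagrange system directly, which would illegitimately presuppose the existence of $\varphi^{\prime\prime}$, and instead work from the first-order form $B^\prime=A$ together with invertibility of the velocity Hessian. The only point genuinely requiring care is the passage from the local solutions $\Psi$ to a statement about $\varphi^\prime$ on all of $I$; this is handled by the uniqueness clause of the inverse function theorem, which ensures the locally defined solutions coincide with the actual $\varphi^\prime$ so that the $\mathscr{C}^1$ conclusion globalizes over the compact interval.
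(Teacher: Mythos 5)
Your proof is correct and follows exactly the route the paper intends: the paper's own proof consists of the single line ``this follows from the implicit function theorem,'' and your argument is precisely that theorem applied to the map $(t,x,y)\mapsto\bigl(t,x,\partial F/\partial y(t,x,y)\bigr)$, whose Jacobian is invertible by the non-degeneracy hypothesis. You also correctly supply the two details the paper leaves implicit --- that the first-order (du Bois--Reymond) form $B^\prime=A$ of the Euler--Lagrange equations gives $B\in\mathscr{C}^1$ without presupposing $\varphi^{\prime\prime}$, and that the local inverses patch consistently along the extremal --- so nothing further is needed.
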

\begin{proof}
This follows from the implicit function theorem.
\end{proof}
Let us now suppose that $F$ is autonomous, i.e. independent of $t$. We are hence trying to look for the curves which are extremum for an action of the type $\displaystyle\int_a^bF(x,y)dt$, with $F$ being of class $\mathscr{C}^1$ on $U\subset \E\times \E$.
\begin{theo}
If $F$ is autonomous then $\displaystyle\dfrac{\partial F}{\partial y}\cdot y-F$ (which is real-valued) is constant on the curves realizing the extremum or again it is a constant of motion.
\end{theo}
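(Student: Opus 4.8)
The plan is to show that the real-valued function
$$H(t):=\dfrac{\partial F}{\partial y}(\varphi(t),\varphi^\prime(t))\cdot\varphi^\prime(t)-F(\varphi(t),\varphi^\prime(t))$$
has vanishing derivative along any extremal $\varphi$, so that it is constant on the interval $I$. Here each partial derivative of $F$ lies in $\mathcal{L}(\E,\R)$ while $\varphi^\prime(t)\in\E$, so the pairing $\frac{\partial F}{\partial y}\cdot\varphi^\prime$ is a genuine scalar and $H:I\to\R$ is well defined. Writing, as before, $B(t):=\frac{\partial F}{\partial y}(\varphi(t),\varphi^\prime(t))$ and $A(t):=\frac{\partial F}{\partial x}(\varphi(t),\varphi^\prime(t))$, the Euler--Lagrange equation established above gives $B^\prime(t)=A(t)$ along the extremal.

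First I would fix the regularity. To differentiate $H$ I need $\varphi^{\prime\prime}$ to exist, so I would work under the hypothesis that $\varphi$ is of class $\mathscr{C}^2$; this is precisely what the non-degeneracy proposition of the previous section supplies once $\det\left(\frac{\partial^2 F}{\partial x_j^\prime\partial x_i^\prime}\right)\neq 0$, and otherwise it is taken as the standing regularity assumption. With that in hand, $H$ is differentiable.

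Next I would compute $\frac{dH}{dt}$ term by term. By the product rule,
$$\dfrac{d}{dt}\big(B(t)\cdot\varphi^\prime(t)\big)=B^\prime(t)\cdot\varphi^\prime(t)+B(t)\cdot\varphi^{\prime\prime}(t),$$
and by the chain rule, using that the autonomous $F$ carries no explicit $t$-dependence,
$$\dfrac{d}{dt}F(\varphi(t),\varphi^\prime(t))=\dfrac{\partial F}{\partial x}\cdot\varphi^\prime(t)+\dfrac{\partial F}{\partial y}\cdot\varphi^{\prime\prime}(t).$$
Substituting $B^\prime=A=\frac{\partial F}{\partial x}$ into the first display and subtracting the second, the term $\frac{\partial F}{\partial x}\cdot\varphi^\prime$ cancels against $B^\prime\cdot\varphi^\prime$, while $B(t)\cdot\varphi^{\prime\prime}=\frac{\partial F}{\partial y}\cdot\varphi^{\prime\prime}$ cancels against $\frac{\partial F}{\partial y}\cdot\varphi^{\prime\prime}$, leaving $\frac{dH}{dt}=0$.

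Since $I=[a,b]$ is connected, a function on $I$ with vanishing derivative is constant, so $H$ is constant along the extremal, which is the asserted constant of motion. The one genuine subtlety, and the main obstacle, is legitimizing the two differentiations: both the product rule and the chain rule applied above presuppose that $\varphi^\prime$ is itself differentiable, i.e. that $\varphi\in\mathscr{C}^2$. This is why I would invoke (or assume) the $\mathscr{C}^2$-regularity of extremals from the preceding non-degeneracy proposition; with only $\varphi\in\mathscr{C}^1$ the composite $t\mapsto F(\varphi(t),\varphi^\prime(t))$ need not be differentiable and the clean cancellation above breaks down.
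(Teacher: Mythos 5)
Your proof is correct and follows essentially the same route as the paper: differentiate $\dfrac{\partial F}{\partial y}\cdot\varphi^\prime-F$ along an extremal, apply the product and chain rules, and let the Euler--Lagrange relation $B^\prime=A$ produce the cancellation $\dfrac{dH}{dt}=0$. Your explicit attention to the $\mathscr{C}^2$-regularity needed to legitimize the two differentiations is a welcome refinement that the paper passes over silently, but it does not change the argument.
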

\begin{proof}
We have $\displaystyle\dfrac{\partial F}{\partial y}\in\mathcal{L}(\E,\R)$ so $\displaystyle\dfrac{\partial F}{\partial y}\cdot y\in \R$. Suppose $\varphi$ realizes the extremum then it follows from the chain rule and the Euler-Lagrange equations that
\begin{equation}
\begin{split}
\displaystyle A(t)&:=\dfrac{d}{dt}\left[\displaystyle\dfrac{\partial F}{\partial y}(\varphi(t),\varphi^\prime(t))\cdot \varphi^\prime(t)-F(\varphi(t),\varphi^\prime(t))\right]\\
&=\dfrac{d}{dt}\displaystyle\dfrac{\partial F}{\partial y}(\varphi(t),\varphi^\prime(t))\cdot \varphi^\prime(t)-\dfrac{\partial F}{\partial x}(\varphi(t),\varphi^\prime(t))\cdot\varphi^\prime(t)\\
&=0.
\end{split}
\end{equation}
\end{proof}
Another important special case is the case when $F(x,y)$ is quadratic homogeneous with respect to the second variable. This means that we have $F(x,\lambda y)=\lambda^2F(x,y)$ for any $\lambda\in\R$.
\begin{defi}
A curve $\varphi:I\to \E$ of class $\mathscr{C}^1$ such that $\varphi^\prime(t)­\not=0$ on $I$ and $(\varphi(t),\varphi^\prime(t))\in U$ for $t\in I$ is called a parametrized curve. Let $\lambda:I\to I$ be surjective, of class $\mathscr{C}^1$, increasing and such that $\lambda^\prime(t)­\not=0$ on $I$. Then from $\varphi$ we can build another $\E$-valued curve $\varphi\circ\lambda$ by composition. This induces an equivalence relation on the set of parametrized curves and a geometrical curve is an equivalence class of parametrized curves.
\end{defi}
Now we remark that if $F(x,y)$ is quadratic homogeneous then the integral $\displaystyle\int_a^b\sqrt{F(\varphi(t),\varphi^\prime(t))}dt$ is well-defined for a geometric curve $\varphi(t)$. Indeed setting $t=\lambda(u)$ and using the chain rule plus the hypothesis of quadratic homogeneity we have
\begin{equation}
\begin{split}
\displaystyle\int_a^b \sqrt{F(\varphi(t),\varphi^\prime(t))}dt&=\int_a^b\sqrt{F((\varphi\circ\lambda)(u),(\varphi\circ\lambda)^\prime})\dfrac{1}{\lambda^\prime(u)}\times \lambda^\prime(u)du\\
&=\int_a^b \sqrt{F(\varphi\circ\lambda(u),\varphi\circ\lambda^\prime(u))}du.
\end{split}
\end{equation} 
\begin{prop}
Assume that for each $x$, $F(x,y)$ is a non degenerate positive quadratic form. Then each curve which realizes the extremum for $\displaystyle\int Fdt$ also does so for the action $\displaystyle\int \sqrt{F}dt$.
\end{prop}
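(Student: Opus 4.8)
The plan is to exploit two structural features of the Lagrangian: $F$ is autonomous and it is homogeneous of degree $2$ in $y$. First I would invoke the theorem on autonomous Lagrangians proved above, which states that along any extremal $\varphi$ of $\int F\,dt$ the quantity $\dfrac{\partial F}{\partial y}\cdot y-F$ is a constant of motion. The decisive observation is Euler's identity for degree-$2$ homogeneity: differentiating $F(x,\lambda y)=\lambda^2F(x,y)$ in $\lambda$ and setting $\lambda=1$ gives $\dfrac{\partial F}{\partial y}(x,y)\cdot y=2F(x,y)$. Hence the constant of motion equals $2F-F=F$ itself, so $F(\varphi(t),\varphi^\prime(t))$ is constant along an energy extremal; call this constant $c$.

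Next I would check that $c>0$. Since for each $x$ the form $F(x,\cdot)$ is non-degenerate and positive, and since a parametrized curve satisfies $\varphi^\prime(t)\neq 0$, we get $F(\varphi(t),\varphi^\prime(t))>0$ for every $t$, so $c>0$. This positivity is exactly what guarantees that $\sqrt F$ is differentiable along the curve, which allows the Euler--Lagrange formalism established earlier to be applied to the length Lagrangian $\sqrt F$ as well.

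The core of the argument is then a comparison of the two sets of Euler--Lagrange equations. By the chain rule $\dfrac{\partial\sqrt F}{\partial y}=\dfrac{1}{2\sqrt F}\dfrac{\partial F}{\partial y}$ and $\dfrac{\partial\sqrt F}{\partial x}=\dfrac{1}{2\sqrt F}\dfrac{\partial F}{\partial x}$, so the Euler--Lagrange equation for $\int\sqrt F\,dt$ reads $\dfrac{d}{dt}\Big(\dfrac{1}{2\sqrt F}\dfrac{\partial F}{\partial y}\Big)=\dfrac{1}{2\sqrt F}\dfrac{\partial F}{\partial x}$. Along the energy extremal $F\equiv c$ is constant, so the scalar factor $\dfrac{1}{2\sqrt F}=\dfrac{1}{2\sqrt c}$ commutes with $\dfrac{d}{dt}$ and cancels from both sides, leaving $\dfrac{d}{dt}\dfrac{\partial F}{\partial y}=\dfrac{\partial F}{\partial x}$. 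But this is precisely the Euler--Lagrange equation for $\int F\,dt$, which $\varphi$ satisfies by hypothesis. Therefore $\varphi$ is also an extremal of $\int\sqrt F\,dt$.

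The main obstacle is securing the constancy of $F$ along energy extremals, since the whole reduction goes through only because the factor $\dfrac{1}{2\sqrt F}$ can be pulled through the time derivative; this rests on combining the autonomous constant-of-motion theorem with the homogeneity identity, and it is the genuinely non-trivial input. The positivity and non-degeneracy hypotheses then play the auxiliary but essential role of ensuring $c>0$, so that $\sqrt F$ is genuinely differentiable and the division by $\sqrt F$ is legitimate along the entire curve. I would also note that only this one implication is claimed, so no converse reasoning is needed.
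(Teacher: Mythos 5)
Your proof is correct and follows essentially the same route as the paper: you combine the constant-of-motion theorem for autonomous Lagrangians with the Euler homogeneity relation $\dfrac{\partial F}{\partial y}\cdot y=2F$ to conclude that $F$ is constant along extremals of $\int F\,dt$, and then cancel the resulting constant factor $\bigl(2\sqrt{F}\bigr)^{-1}$ to pass between the two Euler--Lagrange equations. Your explicit verification that the constant is strictly positive (via positivity and non-degeneracy of $F$) is a welcome detail that the paper only handles implicitly in its remark that $\sqrt{F}$ is smooth where $F\neq 0$.
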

First notice that we can speak about $G(x,y):=\sqrt{F(x,y)}$ because if $F$ is of class $\mathscr{C}^k$ on $U$ then $G$ is of class $\mathscr{C}^k$ on the open set 
$$\displaystyle\{(x,y): F(x,y)­\not=0\}.$$
\begin{proof}
Recall that the extremums for the action defined by $F$: $\displaystyle\int Fdt$ are solutions of the Euler-Lagrange equations 
$$\displaystyle\dfrac{d}{dt}\left(\dfrac{\partial F}{\partial y}\right)=\dfrac{\partial F}{\partial x}$$
and that the extremums for the action defined by $\sqrt{F}$ namely $\displaystyle\int \sqrt{F}dt$ are solutions of the Euler-Lagrange equations 
$$\displaystyle\dfrac{d}{dt}\left(\dfrac{1}{\sqrt{F}}\dfrac{\partial F}{\partial y}\right)=\dfrac{1}{\sqrt{F}}\dfrac{\partial F}{\partial x}.$$ 
But we know two properties of the problem at hand
\begin{itemize}
  \item we have constants of motions:
  $$\displaystyle\dfrac{\partial F}{\partial y}\cdot y-F$$
  is a constant of motion.
  \item We also have the Euler relation of homogeneity: $\displaystyle\dfrac{\partial F}{\partial y}\cdot y=2F$
\end{itemize}
Thus $2F-F=F=\dfrac{\partial F}{\partial y}\cdot y-F$ is a constant of motion. The result follows because we can multiply by $\sqrt{F}^{-1}$ (which is a constant) the first Euler-Lagrange equation to obtain the second one.
\end{proof}
\begin{prop}
Each extremal of $\displaystyle\int\sqrt{F}dt$ possesses a unique parametrization by $t\in I$ such that $\displaystyle F(\varphi(t),\varphi^\prime(t))$ is constant. The curve $t\mapsto\varphi(t)$ is then an extremal of $\displaystyle\int Fdt$.
\end{prop}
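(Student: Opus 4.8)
The plan is to exploit the reparametrization invariance of $\int_a^b\sqrt{F}\,dt$ established just above, together with the quadratic homogeneity $F(x,\lambda y)=\lambda^2F(x,y)$, in order to single out the parametrization proportional to arc length. Given an extremal $\varphi$, I would set $g(t):=\sqrt{F(\varphi(t),\varphi^\prime(t))}$; since $F(x,\cdot)$ is a positive non-degenerate quadratic form and $\varphi^\prime(t)\not=0$, the function $g$ is continuous and strictly positive on $I$. First I would introduce the arc-length $\sigma(t):=\int_a^tg(\tau)\,d\tau$, which is $\mathscr{C}^1$ and strictly increasing, and put $L:=\sigma(b)>0$. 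Then I define the normalized change of parameter $\mu(t):=a+\tfrac{b-a}{L}\sigma(t)$, a $\mathscr{C}^1$ increasing bijection of $I$ onto itself with $\mu(a)=a$ and $\mu(b)=b$; letting $\lambda:=\mu^{-1}$ be its admissible inverse and $\psi:=\varphi\circ\lambda$, a direct computation using $\psi^\prime=(\varphi^\prime\circ\lambda)\,\lambda^\prime$ and quadratic homogeneity yields
$$F(\psi(s),\psi^\prime(s))=\lambda^\prime(s)^2\,g(\lambda(s))^2=\Big(\tfrac{L}{b-a}\Big)^2,$$
which is constant. This establishes existence of the sought parametrization.

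For uniqueness, suppose $\psi_1,\psi_2$ are two parametrizations by $t\in I$ of the same geometric curve, each making $F$ constant, say equal to $c_1$ and $c_2$. They differ by an admissible reparametrization $\rho:I\to I$, $\psi_2=\psi_1\circ\rho$, and homogeneity gives $c_2=\rho^\prime(t)^2c_1$. Hence $\rho^\prime$ is a positive constant, so $\rho$ is affine; being increasing and surjective on $[a,b]$ it fixes the endpoints, whence $\rho=\mathrm{Id}$ and $\psi_1=\psi_2$.

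For the last assertion, I would use that, the functional $\int_a^b\sqrt{F}\,dt$ being invariant under reparametrization, its set of extremals is stable under reparametrization; thus $\psi=\varphi\circ\lambda$ is again an extremal of $\int\sqrt{F}\,dt$ and satisfies
$$\dfrac{d}{dt}\Big(\dfrac{1}{\sqrt{F}}\dfrac{\partial F}{\partial y}\Big)=\dfrac{1}{\sqrt{F}}\dfrac{\partial F}{\partial x}.$$
Along $\psi$ we have $\sqrt{F(\psi,\psi^\prime)}\equiv\tfrac{L}{b-a}$, a constant, so $1/\sqrt{F}$ may be pulled out of the derivative and cancelled on both sides, leaving $\tfrac{d}{dt}\tfrac{\partial F}{\partial y}=\tfrac{\partial F}{\partial x}$, which is exactly the Euler--Lagrange equation for $\int F\,dt$. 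Hence $\psi$ is an extremal of $\int F\,dt$.

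The main obstacle I expect is the book-keeping about regularity and admissibility: one must check carefully that $\mu$ (and hence $\lambda$) is a genuine $\mathscr{C}^1$ increasing bijection of $I$, which rests on $g$ being continuous and strictly positive, and that reparametrization really does preserve the class of extremals of the square-root functional. The positivity and non-degeneracy of the quadratic form $F(x,\cdot)$ are used precisely to guarantee $g>0$, so that both the arc-length change of variable and the factor $1/\sqrt{F}$ are well defined along the curve.
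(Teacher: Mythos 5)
Your proof is correct and follows essentially the same route as the paper's: your normalized change of parameter $\mu(t)=a+\tfrac{b-a}{L}\int_a^t\sqrt{F(\varphi,\varphi^\prime)}\,d\tau$ is exactly the paper's map $t=\tfrac{1}{C}\widetilde{f}(u)+k$ with $C$ and $k$ chosen to fix the endpoints, quadratic homogeneity then makes $F$ constant along the reparametrized curve, and the final reduction (pulling the constant $1/\sqrt{F}$ out of the Euler--Lagrange equation for $\sqrt{F}$ to recover that for $F$) is the paper's argument as well. The only difference is that you explicitly verify the uniqueness claim (constancy of $\rho^\prime$ forces $\rho=\mathrm{Id}$ on $[a,b]$), a step the statement requires but the paper's proof leaves implicit.
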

\begin{proof}
Let $\displaystyle x=\varphi(u)$ be an extremal of $\displaystyle\sqrt(F)dt$. We are looking for $u=\lambda(t)$ such that $\displaystyle \sqrt(F(\psi(\lambda(t)),\psi^\prime(\lambda(t)))\cdot\lambda^\prime(t))$ is independent of $t$. But
$$\displaystyle \sqrt(F(\psi(\lambda(t)),\psi^\prime(\lambda(t))))\cdot\lambda^\prime(t)=\lambda^\prime(t)\sqrt(F(\psi(u),\psi^\prime(u))),\quad u=\lambda(t).$$
We define $f(u):=\sqrt(F(\psi(u),\psi^\prime(u)))$. $f(u)$ is continuous and positive. We want to have
$$\displaystyle\dfrac{du}{dt}f(u)=C,\quad u=\lambda(t).$$
This will will give
$$\displaystyle t=\dfrac{1}{C}\int f(u)du+k=\dfrac{1}{C}\widetilde{f}(u)+k.$$
Choosing $C$ and $k$ so that 
$$\displaystyle u=a\Longrightarrow t=a$$
and
$$\displaystyle u=b\Longrightarrow t=b,$$
we see that $u\mapsto t=t(u)$ sends $I=\left[a,b\right]$ to itself and has a positive derivative. With this parameter $t$ the curve 
$$\displaystyle x=\varphi(t)=\psi(\lambda(t))$$
is a solution of the problem of extremum associated to $\displaystyle\int \sqrt{F}$. As $F$ is constant along this curve, the latter gives us a solution of the extremum problem associated to $F$.
\end{proof}
In the finite dimensional case, if $F$ is independent of $t$ (autonomous) then for each extremal, the following expression is constant along it
$$\displaystyle H=-F+\sum_{j=1}^np_jF_{p_j}=constant.$$
\subsection{Vocabulary}
Classically the following appellations are used
\begin{enumerate}
  \item $H$ is called the Hamiltonian. 
  \item $F=L=L(x,\xi)=L(x,\dot{x})$ is called the Lagrangian (remark a change of notation $x^\prime\leftrightarrow \dot{x}$).
  \item $\displaystyle E=E(x,\dot{x})=E(x,\xi)=\sum_1^n\dot{x}^i\dfrac{\partial L}{\partial \dot{x}^i}-L=\sum_1^n\xi^i\dfrac{\partial L}{\partial \xi^i}-L$ is called the Energy.
  \item $\displaystyle f_i:=\dfrac{\partial L}{\partial x^i}$ is called the Force.
  \item $\displaystyle p_i=\dfrac{\partial L}{\partial \dot{x}^i}=\dfrac{\partial L}{\partial \xi^i}$ is called the impulsion.
  \item $\displaystyle\dfrac{d}{dt}\left(\dfrac{\partial L}{\partial \dot{x}^i}\right)=\dfrac{\partial L}{\partial x^i}$ or $\dot{p}_i=f_i$ are the Euler-Lagrange equations.  
\end{enumerate}
  \begin{exple}
Let the Lagrangian $L$ be given by
$$\displaystyle L=\dfrac{m}{2}\sum_1^n(\xi^{i})^2-U(x)=F(x,\xi).$$  
  Then $f_i=-\dfrac{\partial U}{\partial x^i}$; $p_i=m\xi^i=m\dot{x}^i$ and the Euler-Lagrange equations are $$m\ddot{x}^i=-\dfrac{\partial U}{\partial x^i}.$$
  \end{exple}
Let the Lagrangian $L$ be given by
$$\displaystyle L=\dfrac{1}{2}\sum_{i,j}g_{ij}\xi^{i}\xi^{j}=\dfrac{1}{2}g_{ij}\xi^{i}\xi^{j}$$
with Einstein's summation convention understood.
We have 
$$\displaystyle p_i=g_{ij}\xi^j=\sum_{j}g_{ij}\xi^{j}$$
and
$$\displaystyle f_k=\dfrac{1}{2}\dfrac{\partial g_{ij}}{\partial x^k}\dot{x}^{i}\dot{x}^{j}=\dfrac{1}{2}\dfrac{\partial g_{ij}}{\partial x^k}\xi^{i}\xi^{j}.$$  
  Recall that the Euler-Lagrange equations are 
  $$\displaystyle \dfrac{dp_k}{dt}=\dfrac{1}{2}\dfrac{\partial g_{ij}}{\partial x^k}\dot{x}^{i}\dot{x}^{j}.$$
  But 
  $$\displaystyle\dfrac{dp_k}{dt}=\ddot {x}^jg_{kj}+\dot{x}^j\dfrac{dg_{kj}}{dt}=\ddot {x}^jg_{kj}+\dot{x}^i\dot{x}^j\dfrac{\partial g_{kj}}{\partial x^i}$$
  so that
  $$\displaystyle \ddot {x}^jg_{kj}+\dot{x}^i\dot{x}^j\dfrac{\partial g_{kj}}{\partial x^i}=\dfrac{1}{2}\dfrac{\partial g_{ij}}{\partial x^k}\dot{x}^{i}\dot{x}^{j}.$$
  We introduce the inverse matrix $g^{km}$ of $g_{ij}$ so that
  $$g^{km}g_{jk}=\delta_j^k.$$
  Then
  $$\displaystyle \ddot{x}^m+g^{km}\left(\dfrac{\partial g_{jk}}{\partial x^i}-\dfrac{1}{2}\dfrac{\partial g_{ij}}{\partial x^k}\right)\dot{x}^i\dot{x}^j=0.$$
 Using the following remark
  $$\displaystyle g^{km}\dfrac{\partial g_{jk}}{\partial x^i}\dot{x}^i\dot{x}^j=\dfrac{1}{2}\dot{x}^i\dot{x}^jg^{km}\left[\dfrac{\partial g_{jk}}{\partial x^i}+\dfrac{\partial g_{ik}}{\partial x^j}\right]$$
  we obtain
  $$\displaystyle\ddot{x}^m+\Gamma_{ij}^m\dot{x}^i\dot{x}^j=0$$
  with the Christoffel symbols
  $$\displaystyle\Gamma_{ij}^m=\dfrac{1}{2}g^{km}\left(\dfrac{\partial g_{ik}}{\partial x^j}+\dfrac{\partial g_{jk}}{\partial x^i}-\dfrac{\partial g_{ij}}{\partial x^k}\right).$$
  Thus
  \begin{theo}
  If 
  $$\displaystyle L=g_{ij}\dot{x}^i\dot{x}^j=<\dot{x},\dot{x}>=|\dot{x}|^2$$
  and the action is given by 
  $$\displaystyle S=\int_P^Q|\dot{x}|^2dt$$
  then the Euler-Lagrange equations for the action are just the equations of the geodesics of the metric $(g_{ij})$. 
  \end{theo}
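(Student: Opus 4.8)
The plan is to apply the Euler--Lagrange equations established earlier to the concrete Lagrangian $L=g_{ij}\dot x^i\dot x^j$ and then perform the algebraic manipulations that reveal the Christoffel symbols. Since the preceding computation has essentially carried this out, the proof amounts to assembling those steps into a clean derivation. Note first that the normalization is immaterial: replacing $L$ by $\tfrac12 L$ multiplies the Euler--Lagrange expression by a constant and so does not alter the extremals, which is why one may freely pass between $g_{ij}\dot x^i\dot x^j$ and $\tfrac12 g_{ij}\dot x^i\dot x^j$.

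First I would compute the two ingredients of the Euler--Lagrange equations $\tfrac{d}{dt}p_k=f_k$. The impulsion is $p_k=\partial L/\partial\dot x^k=2g_{kj}\dot x^j$, using the symmetry of $g$, and the force is $f_k=\partial L/\partial x^k=\partial_k g_{ij}\,\dot x^i\dot x^j$. Differentiating $p_k$ along the curve by the chain rule gives $\tfrac{d}{dt}p_k=2\bigl(g_{kj}\ddot x^j+\partial_i g_{kj}\,\dot x^i\dot x^j\bigr)$. Substituting into the Euler--Lagrange equation and dividing by the common factor yields the second-order system $g_{kj}\ddot x^j+\partial_i g_{kj}\,\dot x^i\dot x^j=\tfrac12\partial_k g_{ij}\,\dot x^i\dot x^j$.

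The next step is to solve for $\ddot x^m$ by contracting with the inverse metric $g^{km}$, which exists precisely because $g$ is nondegenerate, matching the hypothesis that $F$ is a nondegenerate quadratic form. This isolates the highest-order term and leaves $\ddot x^m+g^{km}\bigl(\partial_i g_{kj}-\tfrac12\partial_k g_{ij}\bigr)\dot x^i\dot x^j=0$.

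The main point -- and the only place where any genuine idea is needed -- is the symmetrization of the first-derivative term. Since the contraction is against the symmetric quantity $\dot x^i\dot x^j$, the coefficient $\partial_i g_{kj}$ may be replaced by its symmetric part $\tfrac12(\partial_i g_{kj}+\partial_j g_{ki})$ without changing the sum. After this replacement the three terms combine exactly into $\Gamma^m_{ij}=\tfrac12 g^{km}\bigl(\partial_j g_{ik}+\partial_i g_{jk}-\partial_k g_{ij}\bigr)$, so the system becomes $\ddot x^m+\Gamma^m_{ij}\dot x^i\dot x^j=0$. This is the geodesic equation of the metric $(g_{ij})$, which is what we wanted. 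I expect the symmetrization to be the only step requiring care, everything else being a direct application of the chain rule and the already-established Euler--Lagrange theorem.
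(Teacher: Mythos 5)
Your proposal is correct and follows essentially the same route as the paper: computing $p_k$ and $f_k$, expanding $\frac{d}{dt}p_k$ by the chain rule, contracting with the inverse metric $g^{km}$, and symmetrizing $\partial_i g_{kj}$ against $\dot{x}^i\dot{x}^j$ to assemble the Christoffel symbols $\Gamma^m_{ij}$ — which is precisely the computation the paper carries out (with the normalization $L=\tfrac{1}{2}g_{ij}\xi^i\xi^j$, a difference you correctly note is immaterial) in the paragraphs immediately preceding the theorem. No gaps.
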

  In this perspective let us examine the case of surfaces of revolution $\Sigma$ in $\R^3$. These are surfaces in $\R^3$ given parametrically by an equation of the form
  $$f(z,r=x^2+y^2)=0.$$
  Consider the cylindrical coordinates $x=r\cos\theta$, $y=r\sin\theta$, $z$ in $\R^3$. In $\R^3$ sits the Euclidian metric $dl^2=dx^2+dy^2+dz^2$. Then the induced metric on $\Sigma$ is 
  $$\displaystyle dl^2_{|_\Sigma}=dz^2+r^2d\theta^2+dr^2.$$
  Indeed 
  $$\displaystyle x=r\cos\theta\Longrightarrow dx=\cos\theta dr-r\sin\theta d\theta$$
  and
  $$\displaystyle y=r\sin\theta\Longrightarrow dy=\sin\theta dr+r\cos\theta d\theta.$$
  Plugging the expressions of $dx$ and $dy$ into the metric $dl^2$ we get the desired expression of $dl^2_{|_\Sigma}$.
  Assume that locally we can solve the equation $f(z,r)=0$ to obtain $r=r(z)$. The metric then transforms as
  $$dl^2_{|_\Sigma}=(1+r^{\prime2})dz^2+r^2d\theta^2=:g_{zz}dz^2+r^2d\theta^2.$$
  The Lagrangian for the geodesics is 
  $$L=\dfrac{1}{2}\left(g_{zz}\dot{z}^2+r^2(z)\dot{\theta}^2\right)$$
  Consider the two vectors $e_z$ and $e_\theta$ with
  $$<e_z,e_z>=g_{zz},\quad<e_z,e_\theta>=0,\quad <e_\theta,e_\theta>=r^2(z).$$
  The energy $E=L$ and the impulsion is $p=\dfrac{\partial L}{\partial \dot{\theta}}=r^2\dot{\theta}$ are conserved quantities because $L$ is autonomous.
  We define 
  $$\displaystyle\cos\alpha=\dfrac{<v,e_\theta>}{\sqrt{<v,v><e_\theta,e_\theta>}}$$ 
  with $v=\dot{z}e_z+\dot{\theta}e_\theta$. This gives by an easy computation
  $$\displaystyle\cos\alpha=\dfrac{r^2\dot{\theta}}{\sqrt{2}\sqrt{E}r}=\dfrac{p}{\sqrt{2}\sqrt{E}r}.$$
  Hence
  \begin{theo}
  The quantity $r\cos\alpha$ is constant along geodesics on surfaces of revolution.
  \end{theo}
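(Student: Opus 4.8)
The plan is to read off the result directly from the expression for $\cos\alpha$ established just above, together with the two conservation laws already available. Starting from
$$\cos\alpha=\dfrac{p}{\sqrt{2}\sqrt{E}\,r},$$
I would first clear the factor $r$ to isolate the quantity named in the theorem:
$$r\cos\alpha=\dfrac{p}{\sqrt{2}\sqrt{E}}.$$
The right-hand side depends only on $p$ and $E$, so proving the theorem amounts to checking that this ratio does not vary along a geodesic.

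For this, recall the two invariants noted before the statement. The energy $E=L$ is a constant of motion because the Lagrangian $L=\dfrac{1}{2}\big(g_{zz}\dot z^2+r^2(z)\dot\theta^2\big)$ is autonomous, so the earlier theorem on autonomous Lagrangians applies. The impulsion $p=\dfrac{\partial L}{\partial\dot\theta}=r^2\dot\theta$ is a constant of motion because $\theta$ is a cyclic coordinate: since $L$ does not depend on $\theta$, the Euler-Lagrange equation in the $\theta$-direction reads $\dfrac{d}{dt}\dfrac{\partial L}{\partial\dot\theta}=\dfrac{\partial L}{\partial\theta}=0$, hence $\dot p=0$.

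Once both $p$ and $E$ are known to be constant along the geodesic, the quotient $p/(\sqrt{2}\sqrt{E})$ is a ratio of constants and is therefore itself constant; equivalently $r\cos\alpha$ is constant, as claimed. There is essentially no obstacle here: the geometric content was already absorbed into the formula for $\cos\alpha$, and the only point deserving a word of care is the conservation of $p$, which rests on $\theta$ being ignorable rather than on the autonomy of $L$. The argument is thus a brief synthesis of the homogeneity and autonomy results with the definition of the angle $\alpha$.
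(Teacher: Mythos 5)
Your proof is correct and follows essentially the same route as the paper, which derives $\cos\alpha=\dfrac{p}{\sqrt{2}\sqrt{E}\,r}$ and reads off the constancy of $r\cos\alpha$ from the conservation of $E$ and $p$. Your one refinement is in fact more careful than the text: the paper attributes the conservation of both quantities to $L$ being autonomous, whereas, as you note, autonomy only yields $E=\mathrm{const}$, while $\dot p=0$ rests on $\theta$ being a cyclic coordinate, i.e.\ $\dfrac{d}{dt}\dfrac{\partial L}{\partial\dot\theta}=\dfrac{\partial L}{\partial\theta}=0$.
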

  \subsection{Hamiltonian description}
  \subsubsection{The Legendre Transform}
  Recall that the energy and the impulsion are given by
  $$\displaystyle E=\sum_1^n\dot{x}^i\dfrac{\partial L}{\partial \dot{x}^i}-L$$
  and
  $$\displaystyle p_i=\dfrac{\partial L}{\partial \dot{x}^i}.$$
  We say that the Lagrangian is strongly non-degenerate if
  $$\displaystyle\det\left(\dfrac{\partial^2L}{\partial\dot{x}^i\partial\dot{x}^j}\right)­\not =0.$$
  If this is the case the equation $p_i=\dfrac{\partial L(x,\dot{x})}{\partial \dot{x}^i}$ can be solved using the implicit function theorem and its solution is of the form
  $$\displaystyle \dot{x}^i=V^i(x,p).$$
  \begin{defi}
  The Hamiltonian $H(x,p)$ is the energy $E$ given as a function of $x$ and $p$. The space with coordinates $(x,p)$ is called the phase space. If the map $(x,\dot{x})\mapsto (x,p)$ is invertible for strongly non-degenerate Lagrangians. Then this map is called the Legendre transform. It transforms Lagrangians $L(x,\dot{x})$ (beings living in the $(x,\dot{x})$ world) into Hamiltonians $H(x,p)$ (beings living in the $(x,p)$ world).
  \end{defi}
  \begin{theo}
  If the Lagrangian $L(x,\dot{x})$ is strongly non-degenerate i.e.
  $$\displaystyle\det\left(\dfrac{\partial^2L}{\partial\dot{x}^i\partial\dot{x}^j}\right)­\not =0$$
  then
  $$\displaystyle p_i=\dfrac{\partial L(x,\dot{x})}{\partial \dot{x}^i}$$
  can be solved bijectively. Let $H(x,p)$ be the Hamiltonian (recall that $\displaystyle \dot{x}=V(x,p)$). Then
  $$\displaystyle\dfrac{\partial L}{\partial x}-\dfrac{d}{dt}\left(\dfrac{\partial L}{\partial \dot{x}}\right)=0\iff\begin{cases}
     \dot{p} =& -\dfrac{\partial H}{\partial x} \\\\
      \dot{x}=&\dfrac{\partial H}{\partial p} 
\end{cases}$$
in the phase space.
  \end{theo}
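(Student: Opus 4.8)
The plan is to establish the two Hamilton equations by differentiating the Hamiltonian once and then exploiting the defining relation $p_i=\partial L/\partial\dot{x}^i$, which forces a cancellation. Recall that by definition $H(x,p)=\sum_{j=1}^n p_j\dot{x}^j-L(x,\dot{x})$, where $\dot{x}^j=V^j(x,p)$ is the inverse Legendre map whose existence and smoothness are guaranteed by the implicit function theorem applied to $p_i=\partial L/\partial\dot{x}^i$ under the non-degeneracy hypothesis $\det\left(\partial^2L/\partial\dot{x}^i\partial\dot{x}^j\right)\neq 0$. So $H$ is genuinely a function of the independent variables $(x,p)$ alone, and the first task is to read off its partial derivatives.

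First I would compute the total differential of $H$. Writing $dL=\sum_i\frac{\partial L}{\partial x^i}dx^i+\sum_i\frac{\partial L}{\partial\dot{x}^i}d\dot{x}^i$ and substituting into $dH=\sum_j\dot{x}^j\,dp_j+\sum_j p_j\,d\dot{x}^j-dL$, I expect the terms $\sum_j p_j\,d\dot{x}^j$ and $\sum_i\frac{\partial L}{\partial\dot{x}^i}d\dot{x}^i$ to cancel precisely because $p_i=\partial L/\partial\dot{x}^i$. This should yield the clean identity
$$dH=\sum_{i=1}^n\dot{x}^i\,dp_i-\sum_{i=1}^n\frac{\partial L}{\partial x^i}\,dx^i,$$
from which, viewing $(x,p)$ as independent, I would read off
$$\frac{\partial H}{\partial p_i}=\dot{x}^i=V^i(x,p),\qquad\frac{\partial H}{\partial x^i}=-\frac{\partial L}{\partial x^i}.$$
The advantage of working with differentials is that it bypasses the explicit chain-rule bookkeeping through $V(x,p)$: one never has to expand $\partial V^j/\partial p_i$ or $\partial V^j/\partial x^i$, since those contributions are exactly the ones organized into the cancelling term.

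Next I would specialize to a curve. Taking a trajectory $t\mapsto x(t)$ and setting $p_i(t):=\frac{\partial L}{\partial\dot{x}^i}(x(t),\dot{x}(t))$, so that $(x(t),p(t))$ traces a curve in phase space, I would note that by the very construction of the inverse Legendre map $\dot{x}^i(t)=V^i(x(t),p(t))=\frac{\partial H}{\partial p_i}(x(t),p(t))$ holds identically; thus the first Hamilton equation $\dot{x}=\partial H/\partial p$ is automatic and carries no dynamical information. The Euler--Lagrange equation $\frac{d}{dt}\frac{\partial L}{\partial\dot{x}^i}=\frac{\partial L}{\partial x^i}$ then becomes $\dot{p}_i=\frac{\partial L}{\partial x^i}=-\frac{\partial H}{\partial x^i}$, which is exactly the second Hamilton equation. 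Running the computation in reverse, a solution of Hamilton's system produces via $\dot{x}=\partial H/\partial p$ a curve for which $p=\partial L/\partial\dot{x}$ and $\dot{p}_i=\partial L/\partial x^i$, i.e. an extremal, giving the converse implication.

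I expect the main obstacle to be organizational rather than deep: one must justify rigorously that $L(x,V(x,p))$ may be differentiated through the implicitly defined $V$, and keep careful track of which variables are held fixed when passing between the $(x,\dot{x})$ and $(x,p)$ descriptions. This rests on the implicit function theorem, exactly as in the earlier finite-dimensional regularity proposition. Once the differential identity for $dH$ is secured, the equivalence of the two systems is immediate.
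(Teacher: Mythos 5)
Your proposal is correct and follows essentially the same route as the paper: compute $dH$ for $H=-L+\sum_i \dot{x}^i p_i$, let the relation $p_i=\partial L/\partial\dot{x}^i$ cancel the $d\dot{x}^i$ terms, read off $\partial H/\partial p_i=\dot{x}^i$ and $\partial H/\partial x^i=-\partial L/\partial x^i$, and then invoke the Euler--Lagrange equations to obtain Hamilton's system. Your additional remarks---spelling out the reverse implication and flagging the implicit-function-theorem justification for differentiating through $V(x,p)$---merely make explicit what the paper's one-directional computation leaves tacit.
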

  \begin{proof}
  To $L(x,\dot{x})$ we associate $H=-L+\sum\dot{x}^ip_i$ and we want to know what the Euler-Lagrange equations become. We have
  $$\displaystyle dH=-dL+\sum p_id\dot{x}^i+\sum \dot{x}^idp_i.$$
  Expanding we get
  $$\displaystyle dH=-\sum\dfrac{\partial L}{\partial x^i}dx^i-\sum\dfrac{\partial L}{\partial \dot{x}^i}d\dot{x}^i+\sum p_id\dot{x}^i+\sum \dot{x}^idp_i.$$
  Using the identity $\dfrac{\partial L}{\partial \dot{x}^i}=p_i$ we have
  $$\dfrac{\partial H}{\partial x^i}=-\dfrac{\partial L}{\partial x^i}\quad \&\quad\dfrac{\partial H}{\partial p_i}=\dot{x}^i.$$
As 
$$\dot{p}_i=\dfrac{\partial L}{\partial x^i}$$
the Euler-Lagrange equations become the Hamilton equations
$$\dot{x}^i=\dfrac{\partial H}{\partial p_i}\quad\&\quad \dot{p}_i=-\dfrac{\partial H}{\partial x^i}.$$
  \end{proof}
  \begin{rem}
  If $L(x,\dot{x})$ does not depend explicitly on $t$ then $H$ is constant along the extremums. Indeed one has
  \begin{equation}\begin{split}\displaystyle\dfrac{dH}{dt}&=\sum_{i=1}^{n}\left(\dfrac{\partial H}{\partial x_i}\dfrac{dx_i}{dt}+\dfrac{\partial H}{\partial p_i}\dfrac{dp_i}{dt}\right)\\
  &=\sum_{i=1}^{n}\left(\dfrac{\partial H}{\partial x_i}\dfrac{\partial H}{\partial p_i}-\dfrac{\partial H}{\partial p_i}\dfrac{\partial H}{\partial x_i}\right)=0.
  \end{split}
\end{equation}  
 Therefore $H$ is a first integral of the Euler equations. More generally if one considers a sufficiently smooth function $\displaystyle\phi=\phi(x_1,\cdots,x_n,p_1,\cdots,p_n)$ then
 $$\displaystyle\dfrac{d\phi}{dt}=\sum_{i=1}^{n}\left(\dfrac{\partial \phi}{\partial x_i}\dfrac{\partial H}{\partial p_i}-\dfrac{\partial \phi}{\partial p_i}\dfrac{\partial H}{\partial x_i}\right)=\{\phi,H\}$$
 where $\{\phi,H\}$ is the Poisson bracket of $\phi$ and $H$. Hence $\phi$ is a first integral of the Euler system 
 $$\displaystyle \dot{x}_i=\dfrac{\partial H}{\partial p_i} \quad \& \quad \dot{p}_i=-\dfrac{\partial H}{\partial x_i}$$
 if and only if $\displaystyle\{\phi,H\}=0$.
 \end{rem}
 \subsubsection{The Legendre Transform in dimension 1}
 Let $f(\xi)$ be a strictly convex function on $\R$. Let us find its minimum. To do so we introduce the tangential coordinate 
 $$p=f^\prime(\xi).$$
 Then
 $$\displaystyle\dfrac{dp}{d\xi}=f^{\prime\prime}(\xi)>0.$$
 Consider the new function of $p$
 $$\displaystyle H\left(p\right)=-f(\xi)+p\xi.$$
 It is well-defined because $\xi$ is a well-defined function of $p$. The transformation
 $$\displaystyle (\xi,f(\xi))\mapsto(p,H\left(p\right))$$
 is the Legendre transform. As
 $$\displaystyle dH=-f^{\prime}(\xi)d\xi+pd\xi+\xi dp,$$
one has  
$$\displaystyle \dfrac{dH}{dp}=\xi \quad \&\quad \dfrac{d^2H}{dp^2}=\dfrac{d\xi}{dp}=\dfrac{1}{\dfrac{dp}{d\xi}}=\dfrac{1}{f^{\prime\prime}}>0.$$
Thus the convexity of $f$ implies the convexity of $H$. Moreover one has
$$\displaystyle -H\left(p\right)+pH^{\prime}\left(p\right)=f(\xi)-p\xi+p\xi=f(\xi).$$
This means that the Legendre transform is involutive. To illustrate the determination of the minimum we restrict to a simple example. Let $\displaystyle a>1$ and $\displaystyle f(\xi)=\dfrac{\xi^{a}}{a}$. A simple computation gives $\displaystyle\xi=p^{\frac{1}{a-1}}$. Plugging this value into $H$ gives
$$\displaystyle H=-\dfrac{\xi^{a}}{a}+p\xi=\dfrac{p^b}{b},\quad\dfrac{1}{a}+\dfrac{1}{b}=1.$$
We consider $-H\left(p\right)+\xi p$ as a function of two variables. Let us show that
$$\displaystyle f(\xi)=\max_p\left(-H\left(p\right)+\xi p\right).$$
We have
$$\displaystyle \dfrac{\partial}{\partial p}(-H\left(p\right)+\xi p)=0\iff\xi=H^\prime\left(p\right)$$
and
$$\displaystyle \dfrac{\partial^2}{\partial p^2}(-H\left(p\right)+\xi p)=-H^{\prime\prime}\left(p\right)=-\dfrac{1}{f^{\prime\prime}(\xi)}<0.$$
As a consequence the extremum is a maximum and we obtain
$$\displaystyle f(\xi)=\max_p(-H\left(p\right)+\xi p)$$
and
$$\min_\xi f(\xi)=\min_\xi\max_p(-H\left(p\right)+\xi p).$$
\section{The Noether theorem}
We have seen that the Euler equations corresponding to the functional
$$\displaystyle\int_a^bF(x_1,\cdots,x_n,\dot{x}_1,\cdots,\dot{x}_n)dt$$
where $F$ is independent of $t$ admit as first integral the following function $H$
$$\displaystyle H=-F+\sum_{i=1}^n\dot{x}_iF_{\dot{x}_i}.$$
The fact that $F$ is independent of $t$ means that $F$ and the integral are invariant under the change of variable $\displaystyle x\mapsto x^\star=x+\epsilon$, with $\epsilon$ a constant.
Consider 
$$\displaystyle J(x_1,\cdots,x_n)=\int_{t_0}^{t_1}F(t,x_1,\cdots,x_n,\dot{x}_1,\cdots,\dot{x}_n)dt=\int_{t_0}^{t_1}F(t,x,\dot{x})dt$$
and the transformation
\begin{equation}
\begin{split}
t^\star&=\phi(t,x,\dot{x})\\
x_i^\star&=\Psi_i(t,x,\dot{x}),\quad 1²i²n
\end{split}
\end{equation}
This transformation sends a curve $\gamma$ defined by $x=x(t),\,t_0\leqslant t\leqslant t_1$ on another curve $\gamma^\star$ defined by $\displaystyle x^\star=x^\star(t^\star),\,t_0^\star \leqslant t^\star \leqslant t_1^\star$.
\begin{defi}
The functional $\displaystyle J(x)=\int_{t_0}^{t_1}F(t,x,\dot{x})dt$ is said to be invariant under the above group of transformations if $\displaystyle J(\gamma)=J(\gamma^\star)$, i.e.
$$\displaystyle\int_{t_0^\star}^{t_1^\star}F(t^\star,x^\star,\dfrac{dx^\star}{dt^\star})dt^\star= \int_{t_0}^{t_1}F(t,x,\dfrac{dx}{dt}).$$
\end{defi}
\begin{ex}
We consider $\displaystyle J(x)=\int_{t_0}^{t_1}\dot{x}^2dt$ and the transformations 
$$\displaystyle\begin{cases}t^\star=t+\epsilon\\x^\star(t^\star)=x(t)=x(t^\star-\epsilon).\end{cases}$$
Then
$$\displaystyle J(\gamma^\star)=\int_{t_0^\star}^{t_1^\star}\dfrac{dx^{\star 2}}{dt^\star}(t^\star)dt^\star=\int_{t_0+\epsilon}^{t_1+\epsilon}\dfrac{dx^{ 2}}{dt^\star}(t+\epsilon)dt^\star=\int_{t_0}^{t_1}\dfrac{dx^{2}}{dt}(t)dt=J(\gamma).$$
\end{ex}
We take a family of transformations
$$\displaystyle \begin{cases}t^\star=\phi(t,x,\dot{x};\epsilon)\\x_i^\star=\Psi_i(t,x,\dot{x};\epsilon)\end{cases}$$
with:
\begin{enumerate}
\item $\phi$ and $\Psi_i$ differentiable functions with respect to $\epsilon$
\item $\phi(t,x,\dot{x};0)=t$;\quad $\Psi_i(t,x,\dot{x};0)=x_i,\,1\leqslant i\leqslant n$.
\end{enumerate}
\begin{theo}[Noether]
If the functional $\displaystyle J(\gamma)=\int_{t_0}^{t_1}F(t,x,\dot{x})dt$ is invariant under the action of the above given family of transformations for every $t_0$ and $t_1$ then
$$\displaystyle\sum_{i=1}^n F_{\dot{x}_i}\psi_i+(F-\sum\dot{x}_iF_{\dot{x}_i})\varphi\equiv constant$$
along any extremal of $J(\gamma)$, where
$$\displaystyle\varphi(t,x,\dot{x})=\dfrac{\partial\phi(t,x,\dot{x};\epsilon)}{\partial\epsilon}_{|_{\epsilon=0}}$$
$$\displaystyle\psi_i(t,x,\dot{x})=\dfrac{\partial\Psi_i(t,x,\dot{x};\epsilon)}{\partial\epsilon}_{|_{\epsilon=0}}.$$
In other words, every one parameter family of transformations leaving the functional $J(\gamma)$ invariant admits a first integral for its Euler system.   
\end{theo}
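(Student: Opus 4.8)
The plan is to combine the hypothesis that $J$ is invariant for \emph{every} choice of endpoints $t_0,t_1$ with differentiation in the group parameter at $\epsilon=0$. Since the right-hand side $\int_{t_0}^{t_1}F(t,x,\dot x)\,dt$ does not depend on $\epsilon$, differentiating the invariance identity $J(\gamma)=J(\gamma^\star)$ in $\epsilon$ at $\epsilon=0$ will produce a pointwise differential identity; invoking the Euler--Lagrange equations along an extremal will then let me recognise that identity as the statement that the candidate quantity
\[
I(t):=\sum_{i=1}^n F_{\dot x_i}\psi_i+\Bigl(F-\sum_{i=1}^n\dot x_i F_{\dot x_i}\Bigr)\varphi
\]
has vanishing $t$-derivative.

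First I would pull the transformed action back to the original parameter. Setting $t^\star=\phi(t,x(t),\dot x(t);\epsilon)$ along the curve and writing $\dot t^\star=dt^\star/dt$, $\dot x_i^\star=dx_i^\star/dt$, the substitution $t^\star\mapsto t$ turns the left integral into $\int_{t_0}^{t_1}F\bigl(t^\star,x^\star,\dot x^\star/\dot t^\star\bigr)\,\dot t^\star\,dt$. I would then differentiate the integrand in $\epsilon$ at $\epsilon=0$, using $t^\star=t$, $x^\star=x$, $\dot t^\star=1$, $\dot x^\star=\dot x$ there, together with $\partial_\epsilon t^\star|_0=\varphi$, $\partial_\epsilon x_i^\star|_0=\psi_i$ and the interchange $\partial_\epsilon\dot t^\star|_0=\dot\varphi$, $\partial_\epsilon\dot x_i^\star|_0=\dot\psi_i$. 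The only slightly delicate term is the derivative of the quotient $\dot x_i^\star/\dot t^\star$, which at $\epsilon=0$ equals $\dot\psi_i-\dot x_i\dot\varphi$. Because the $\epsilon$-derivative of the left side vanishes for all $t_0,t_1$, the integrand identity must hold pointwise:
\[
F_t\varphi+\sum_i F_{x_i}\psi_i+\sum_i F_{\dot x_i}(\dot\psi_i-\dot x_i\dot\varphi)+F\dot\varphi=0.
\]

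It then remains to match this with $dI/dt$. Along an extremal the Euler--Lagrange equations give $\tfrac{d}{dt}F_{\dot x_i}=F_{x_i}$, and a short computation (identical in spirit to the autonomous ``constant of motion'' computation earlier in the text) yields $\tfrac{d}{dt}\bigl(F-\sum_i\dot x_i F_{\dot x_i}\bigr)=F_t$. Differentiating $I(t)$ by the product rule and substituting these two facts reproduces exactly the left-hand side of the pointwise identity above; hence $dI/dt=0$, so $I$ is constant along the extremal, which is the assertion.

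I expect the main obstacle to be the careful bookkeeping in the second step: justifying differentiation under the integral sign and the interchange of $\partial_\epsilon$ and $d/dt$ (both legitimate under the assumed differentiability of $\phi,\Psi_i$ and smoothness of the extremal), and correctly handling the nonlinear dependence of $F$ on the reparametrised velocity $\dot x^\star/\dot t^\star$, which is the source of the characteristic combination $\dot\psi_i-\dot x_i\dot\varphi$. Everything after that is a term-by-term comparison.
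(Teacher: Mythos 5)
Your proof is correct, and it takes a genuinely different route from the paper. The paper's proof is very short: after writing the Taylor--Young expansions $t^\star=t+\epsilon\varphi+\circ(\epsilon)$, $x_i^\star=x_i+\epsilon\psi_i+\circ(\epsilon)$, it simply invokes the ``fundamental identity'' of Gelfand--Fomin (the first-variation formula with variable endpoints, valid along an extremal),
\[
\delta J=\epsilon\Bigl[\sum_{i=1}^{n}F_{\dot{x}_i}\psi_i+\bigl(F-\sum\dot{x}_iF_{\dot{x}_i}\bigr)\varphi\Bigr]_{x=x_0}^{x=x_1},
\]
sets $\delta J=0$ by invariance, and concludes because the endpoints are arbitrary. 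You instead never touch the boundary-term machinery: you use the hypothesis ``for every $t_0,t_1$'' at the start rather than at the end, differentiating the pulled-back invariance identity under the integral at $\epsilon=0$ to obtain the pointwise relation $F_t\varphi+\sum_i F_{x_i}\psi_i+\sum_i F_{\dot x_i}(\dot\psi_i-\dot x_i\dot\varphi)+F\dot\varphi=0$ along the curve, and then check directly, via the Euler--Lagrange equations and the identity $\tfrac{d}{dt}\bigl(F-\sum_i\dot x_iF_{\dot x_i}\bigr)=F_t$ (which indeed is the earlier autonomous conservation computation with $F_t$ retained), that this relation is exactly $dI/dt=0$. I verified the delicate points: the quotient derivative $\partial_\epsilon(\dot x_i^\star/\dot t^\star)|_0=\dot\psi_i-\dot x_i\dot\varphi$ is right, and the term-by-term match with $dI/dt$ works. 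The trade-off: the paper's argument is two lines but rests entirely on the cited variable-endpoint variation formula, whose own proof contains precisely the integration by parts your computation replaces; your version is self-contained, makes explicit where the arbitrariness of $t_0,t_1$ enters, and exposes transparently how the reparametrised velocity produces the characteristic combination $\dot\psi_i-\dot x_i\dot\varphi$ -- at the modest cost of the bookkeeping you yourself flag (differentiation under the integral, interchange of $\partial_\epsilon$ and $d/dt$, and enough regularity of the extremal for $\dot t^\star$ to make sense, since $\phi$ depends on $\dot x$).
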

\begin{proof}
One has by the Taylor-Young formula
$$\displaystyle t^\star=\phi(t,x,\dot{x};\epsilon)=\phi(t,x,\dot{x};0)+\epsilon\dfrac{\partial\phi(t,x,\dot{x};\epsilon)}{\partial\epsilon}_{|_{\epsilon=0}}+\circ(\epsilon)$$
$$\displaystyle x_i^\star=\displaystyle\Psi_i(t,x,\dot{x};\epsilon)=\Psi_i(t,x,\dot{x};0)+\epsilon\dfrac{\partial\Psi_i(t,x,\dot{x};\epsilon)}{\partial\epsilon}_{|_{\epsilon=0}}+\circ(\epsilon)$$
or again
$$\displaystyle t^\star=t+\epsilon\varphi(t,x,\dot{x})+\circ(\epsilon)$$
$$\displaystyle x_i^\star=x_i+\epsilon\psi_i(t,x,\dot{x})+\circ(\epsilon).$$
We suppose that the curve $x_i=x_i(t),\,1\leqslant i\leqslant n$ is an extremal; then one has the following fundamental identity \cite[p.~59]{gelfandfomin}
$$\displaystyle\delta J=\epsilon\left[\sum_{i=1}^{n}F_{\dot{x}_i}\psi_i+(F-\sum\dot{x}_i)F_{\dot{x}_i})\varphi\right]_{x=x_0}^{x=x_1}.$$
Because $J(\gamma)$ is invariant under the group of transformations we have $\delta J=0$ hence the result because $x_0$ and $x_1$ are arbitrary.
\end{proof}
\section{The Linear Sigma model}
Let $\mathbf{M}$ be a Riemannian manifold of dimension $m$, with metric tensor $$\displaystyle(\gamma_{\alpha\beta})_{1\leqslant\alpha,\beta\leqslant m}.$$ We recall the following notations 
$$\displaystyle(\gamma^{\alpha\beta})_{1\leqslant\alpha,\beta\leqslant m};\quad\gamma^{\alpha\beta}\gamma_{\beta\nu}=\delta_\nu^\alpha$$
and the Christoffel symbols
$$\Gamma_{\beta\gamma}^{\alpha}=\dfrac{1}{2}\gamma^{\alpha\delta}\left(\gamma_{\beta\delta,\eta}+\gamma_{\eta\delta,\beta}-\gamma_{\beta\eta,\delta}\right).$$
For a function $\phi:\mathbf{M}\to \R$ of class $\mathscr{C}^1$, we consider the invariantly defined  "norm"
$$\displaystyle \|d\phi\|^2=\langle d\phi,d\phi\rangle_{T^\star \mathbf{M}}=\gamma^{\alpha\beta}\dfrac{\partial\phi(x)}{\partial x^\alpha}\dfrac{\partial\phi(x)}{\partial x^\beta}$$
The Dirichlet integral of $\phi$ with respect to this metric is 
$$\displaystyle \mathcal{S}(\phi)=\dfrac{1}{2}\int_\mathbf{M}\gamma^{\alpha\beta}\dfrac{\partial\phi(x)}{\partial x^\alpha}\dfrac{\partial\phi(x)}{\partial x^\beta}\sqrt{\gamma}dx_1\cdots dx_n=\dfrac{1}{2}\int_\mathbf{M}\|d\phi\|^2dv_\gamma(\mathbf{M}).$$
Minimizers of the functional are harmonic functions: they solve the Laplace-Beltrami equation
$$\displaystyle\Delta_\mathbf{M}\phi=\dfrac{1}{\sqrt{\gamma}}\dfrac{\partial}{\partial x^\alpha}\left(\sqrt{\gamma}\gamma^{\alpha\beta}\dfrac{\partial}{\partial x^\beta}\phi\right)=0.$$
Let us specialize to the two-dimensional case, that is we consider a surface endowed with some Riemannian metric. We have
$$\displaystyle\|d\phi\|^2=\gamma^{zz}\dfrac{\partial\phi}{\partial z}\dfrac{\partial\phi}{\partial z}+2\gamma^{z\overline{z}}\dfrac{\partial\phi}{\partial z}\dfrac{\partial\phi}{\partial \overline{z}}+\gamma^{\overline{z}\,\overline{z}}\dfrac{\partial\phi}{\partial \overline{z}}\dfrac{\partial\phi}{\partial \overline{z}}.$$
The action becomes
\begin{equation}\begin{split}\displaystyle\mathcal{S}(\phi)&=\dfrac{1}{2}\int\left(\gamma^{zz}\dfrac{\partial\phi}{\partial z}\dfrac{\partial\phi}{\partial z}+2\gamma^{z\overline{z}}\dfrac{\partial\phi}{\partial z}\dfrac{\partial\phi}{\partial \overline{z}}+\gamma^{\overline{z}\,\overline{z}}\dfrac{\partial\phi}{\partial \overline{z}}\dfrac{\partial\phi}{\partial \overline{z}}\right)\sqrt{\gamma_{zz}\gamma_{\overline{z}\,\overline{z}}-\gamma_{z\overline{z}}^2}dz\wedge d\overline{z}\\
&=\dfrac{1}{2}\int_\mathbf{M}\gamma^{\alpha\beta}\dfrac{\partial\phi}{\partial x^\alpha}\dfrac{\partial\phi}{\partial x^\beta}\sqrt{\gamma_{11}\gamma_{22}-\gamma_{12}^2}dx\wedge dy.\end{split}\end{equation}
It is fruitful to consider the action not only as a function of the field $\phi$, but also to see it as depending on the metric $\gamma$. We thus write $\mathcal{S}(\phi,\gamma)$. We want to study the effect of variations $\delta\gamma$ of the metric on $\mathcal{S}$. We recall that $$\sqrt{\gamma_{11}\gamma_{22}-\gamma_{12}^2}=(\sqrt{\gamma^{11}\gamma^{22}-\left(\gamma^{12}\right)^2})^{-1}.$$
Computation gives
$$\displaystyle\delta\mathcal{S}(\phi,\gamma)=\int_\mathbf{M}T_{\alpha\beta}\delta\gamma^{\alpha\beta}\sqrt{\gamma_{11}\gamma_{22}-\gamma_{12}^{2}}dx\wedge dy$$
with
$$\displaystyle T_{\alpha\beta}=\dfrac{\partial\phi}{\partial x^\alpha}\dfrac{\partial\phi}{\partial x^\beta}-\dfrac{1}{2}\gamma_{\alpha\beta}\gamma^{\varepsilon\eta}\dfrac{\partial\phi}{\partial x^\varepsilon}\dfrac{\partial\phi}{\partial x^\eta}.$$
$T_{\alpha\beta}$ is called the energy-momentum tensor.
\begin{lem}
$T$ is trace-free, that is $T_\alpha^\alpha=0$. 
\end{lem}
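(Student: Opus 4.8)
The plan is to compute the trace directly by raising one index of $T_{\alpha\beta}$ with the metric and contracting, i.e.\ by forming $T_\alpha^\alpha=\gamma^{\alpha\beta}T_{\alpha\beta}$, and then to exploit the fact that we are on a surface, so the dimension is exactly $2$. First I would substitute the explicit expression for the energy-momentum tensor, obtaining
$$\gamma^{\alpha\beta}T_{\alpha\beta}=\gamma^{\alpha\beta}\dfrac{\partial\phi}{\partial x^\alpha}\dfrac{\partial\phi}{\partial x^\beta}-\dfrac{1}{2}\,\gamma^{\alpha\beta}\gamma_{\alpha\beta}\,\gamma^{\varepsilon\eta}\dfrac{\partial\phi}{\partial x^\varepsilon}\dfrac{\partial\phi}{\partial x^\eta}.$$

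The two ingredients I would then invoke are both immediate. The first is that $\gamma^{\alpha\beta}\dfrac{\partial\phi}{\partial x^\alpha}\dfrac{\partial\phi}{\partial x^\beta}$ and $\gamma^{\varepsilon\eta}\dfrac{\partial\phi}{\partial x^\varepsilon}\dfrac{\partial\phi}{\partial x^\eta}$ are one and the same scalar, namely $\|d\phi\|^2$, the dummy indices being irrelevant. The second is the contraction identity $\gamma^{\alpha\beta}\gamma_{\alpha\beta}=\delta_\alpha^\alpha$, which counts the dimension of $\mathbf{M}$; since here $\mathbf{M}$ is a surface, $\delta_\alpha^\alpha=2$. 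Plugging these in gives
$$T_\alpha^\alpha=\|d\phi\|^2-\dfrac{1}{2}\cdot 2\cdot\|d\phi\|^2=\|d\phi\|^2-\|d\phi\|^2=0,$$
which is the claim.

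I do not expect any genuine obstacle: the computation is a single contraction. The one point worth flagging, rather than a difficulty, is that the cancellation is special to dimension two. In dimension $m$ the same manipulation yields $T_\alpha^\alpha=\bigl(1-\tfrac{m}{2}\bigr)\|d\phi\|^2$, so it is precisely the factor $\gamma^{\alpha\beta}\gamma_{\alpha\beta}=2$ matching the $\tfrac12$ in the definition of $T_{\alpha\beta}$ that forces the trace to vanish. This reflects the conformal invariance of the two-dimensional Dirichlet action, so in the write-up I would make explicit that the step $\gamma^{\alpha\beta}\gamma_{\alpha\beta}=2$ is where the two-dimensionality enters.
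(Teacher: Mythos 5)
Your proof is correct, but it takes a genuinely different route from the paper. You compute the trace directly: substitute the explicit formula for $T_{\alpha\beta}$, use $\gamma^{\alpha\beta}\gamma_{\alpha\beta}=\delta_\alpha^\alpha=2$, and watch the two copies of $\|d\phi\|^2$ cancel. The paper instead argues variationally: it invokes the conformal invariance $\mathcal{S}(\phi,e^\sigma\gamma)=\mathcal{S}(\phi,\gamma)$ of the two-dimensional Dirichlet action, notes that the infinitesimal conformal variation of $\gamma^{-1}$ is $-\delta\sigma\,\gamma^{-1}$, and deduces from $\delta\mathcal{S}=0$ that $\int_\mathbf{M} T_\alpha^\alpha\,\delta\sigma\sqrt{\gamma_{11}\gamma_{22}-\gamma_{12}^2}\,dx\wedge dy=0$ for all $\delta\sigma$, which forces $T_\alpha^\alpha=0$. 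Your computation is more elementary and gives the pointwise vanishing immediately, with no appeal to the variational formula for $\delta\mathcal{S}$; the paper's argument is more conceptual, exhibiting tracelessness as the direct expression of conformal symmetry, and it would apply to any conformally invariant action even without an explicit closed form for the energy-momentum tensor. Your closing remark that $T_\alpha^\alpha=\bigl(1-\tfrac{m}{2}\bigr)\|d\phi\|^2$ in dimension $m$, so that the cancellation is special to $m=2$, recovers exactly the mechanism the paper's proof encodes structurally, so the two arguments illuminate each other: yours shows \emph{that} the trace vanishes by arithmetic, the paper's shows \emph{why} it must.
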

\begin{proof}
This is due to the conformal invariance of $\mathcal{S}$ in dimension $2$: when we change the metric $\gamma$ to $e^\sigma\gamma$ for some $\sigma:\mathbf{M}\to\R$, then $\mathcal{S}$ remains invariant:
$$\displaystyle \mathcal{S}(\phi,e^\sigma\gamma)=\mathcal{S}(\phi,\gamma).$$
Infinitesimally the variation of $\gamma^{-1}$ is $-\delta\sigma\gamma^{-1}$ so that
$$\displaystyle0=\int_\mathbf{M} T_{\alpha\beta}\delta\sigma\gamma^{\alpha\beta}\sqrt{\gamma_{11}\gamma_{22}-\gamma_{12}^2}dx\wedge dy=\int_\mathbf{M} T_{\alpha}^\alpha\delta\sigma\sqrt{\gamma_{11}\gamma_{22}-\gamma_{12}^2}dx\wedge dy$$ 
for all variations $\delta\sigma$. This implies $T_\alpha^\alpha=0$.
\end{proof}
\section{Electromagnetism}
The electromagnetic field is described by two vector fields. The electric field 
$$E=(E_1,E_2,E_3)$$
and the magnetic field
$$H=(H_1,H_2,H_3).$$
They satisfy the Maxwell equations
\begin{equation}
\begin{aligned}
\displaystyle\rot E&=-\dfrac{\partial H}{\partial x_0},\\
\dive H&=0,\end{aligned}
\qquad
\begin{aligned}
\displaystyle\rot H&=\dfrac{\partial E}{\partial x_0}\\
\dive E&=0.
\end{aligned}
\end{equation}
We recall that
$$\displaystyle\dive E=\dfrac{\partial E_1}{\partial x_1}+\dfrac{\partial E_2}{\partial x_2}+\dfrac{\partial E_3}{\partial x_3}$$
and
$$\displaystyle\rot E=\left(\dfrac{\partial E_3}{\partial x_2}-\dfrac{\partial E_2}{\partial x_3},\dfrac{\partial E_1}{\partial x_3}-\dfrac{\partial E_3}{\partial x_1},\dfrac{\partial E_2}{\partial x_1}-\dfrac{\partial E_1}{\partial x_2}\right).$$
We can express $E$ and $H$ in terms of a potential $\{A_j\}=(A_0,A_1,A_2,A_3)$ by introducing
$$\displaystyle E=\grad A_0-\dfrac{\partial A}{\partial x_0},\quad H=\rot A, \quad A:=(A_1,A_2,A_3)$$
and
$$\grad A_0=\left(\dfrac{\partial A_0}{\partial x_1},\dfrac{\partial A_0}{\partial x_2},\dfrac{\partial A_0}{\partial x_3}\right).$$
This potential $\{A_j\}$ is not uniquely determined by $E$ and $H$. Indeed $E$ and $H$ remain unchanged when we make a gauge transformation, i.e. when we replace $\{A_j\}$ by a new potential $\{\mathcal{A}_j\}$, satisfying
$$\displaystyle\mathcal{A}_j(x)=A_j(x)+\dfrac{\partial f}{\partial x_j}(x),\quad j=0,1,2,3$$
with $x=(x_0,x_1,x_2,x_3)$ and $f(x)$ an arbitrary sufficiently differentiable function. To avoid this lack of uniqueness we impose a condition called the Lorentz condition
$$\displaystyle-\dfrac{\partial A_0}{\partial x_0}+\dive A=\sum_{j=0}^3g_{jj}\dfrac{\partial A_j}{\partial x_j}=0$$
where $g_{jj}=(-1,1,1,1)$. We consider the antisymmetric matrix
$$\displaystyle H=\left(\begin{array}{cccc}0 & -E_1 & -E_2 & -E_3 \\E_1 & 0 & H_3 & -H_2 \\E_2 & -H_3 & 0 & H_1 \\E_3 & H_2 & -H_1 & 0\end{array}\right).$$
Then $\displaystyle H_{ij}=\dfrac{\partial A_j}{\partial x_i}-\dfrac{\partial A_i}{\partial x_j}$. It is possible to express the Maxwell equations by means of the matrix $H$:

$$\displaystyle \sum_{i=0}^3g_{ii}\dfrac{\partial H_j}{\partial x_i}=0,\quad j=0,1,2,3$$
$$\dfrac{\partial H_{ij}}{\partial x_k}+\dfrac{\partial H_{ki}}{\partial x_j}+\dfrac{\partial H_{jk}}{\partial x_i}=0,\quad i,j,k=0,1,2;\, 1,2,3;\,2,3,0;\,3,0,1.$$
Summing up we see that the second identity in the previous equation is identically satisfied, while the first one reduces to
\begin{equation}\displaystyle\Box A_j=0,\,j=0,1,2,3,\quad \Box=-\dfrac{\partial^2}{\partial x_0^2}+\dfrac{\partial^2}{\partial x_1^2}+\dfrac{\partial^2}{\partial x_2^2}+\dfrac{\partial^2}{\partial x_3^2}.\end{equation}
$\Box$ is called the d'Alembertian. These d'Alembertian equations can be shown to be the consequence of a variational problem if we consider the Lagrangian 
\begin{equation}\displaystyle\mathscr{L}=\dfrac{1}{8\pi}(E^2-H^2)=\dfrac{1}{8\pi}\left[\left(\grad A_0-\dfrac{\partial A}{\partial x_0}\right)^2-(\rot A)^2\right].\end{equation}
We verify that the Euler-Lagrange of $\mathscr{L}$ reduce to give the d'Alembertian equations. The Euler-Lagrange equations are
$$\dfrac{\partial\mathscr{L}}{\partial A_j}-\sum_{i=0}^3\dfrac{\partial\mathscr{L}}{\partial\left(\dfrac{\partial A_j}{\partial x_i}\right)}=0,\,(j=0,1,2,3).$$
A computation using the Lorentz condition gives the result.
\section{Hadamard's variation formula for the Green's function}
In this section we intend to derive the so-called Hadamard variation formula, which represents the first order term in the expansion of the Green's function in powers of a small parameter describing an infinitesimal displacement of the boundary of the domain. This formula has many applications in the calculus of variations, and it suggests the answers to a variety of extremal problems that are basic to an understanding of the behavior of solutions of linear elliptic partial differential equations.

\subsection{Hadamard's variation formula for the Green's function}
Let $D$ a plane domain with boundary an analytic simple closed curve. We are interested in the derivation of Hadamard's variational formula for the harmonic Green's function of the plane domain $D$. Let $D^\star$ the region whose boundary $\partial D^\star$ is obtained by shifting $\partial D$ an infinitesimal distance 
\begin{equation}
\label{had1}
\delta\nu=\epsilon\rho(s)
\end{equation}
along its inner normal $\nu$, where $\epsilon$ is a small number and $\rho$ is an real analytic function of the arc length $s$ on $\partial D$. It is to be understood that points of $\partial D$ where $\delta \nu$ is negative are moved in the direction of the outer normal instead of the inner normal. Our objective will be to find the first order term $\delta G$ in the expansion of the harmonic Green's function $G^\star$ of $D^\star$ in powers of the parameter $\epsilon$.

Consider the identity
\begin{equation}
\label{had2}
\displaystyle\int\int_{D-D^\star}\nabla G\cdot\nabla G^\star dx_1dx_2=\int_{\partial D}\dfrac{\partial G}{\partial \nu}\dfrac{\partial G^\star}{\partial \nu}\delta\nu ds+O(\epsilon^2),
\end{equation}  
which is a consequence of \eqref{had1} and the fact that $G$ vanishes on $\partial D$, so that
$$\dfrac{\partial G}{\partial s}=0$$
there. It is assumed that the integral on the left in \eqref{had2} is evaluated over $D^\star-D$ instead of $D-D^\star$, but with a reversed sign, at points associated with negative values of the normal displacement $\delta\nu$ of $\partial D$. Moreover we suppose that $\epsilon$ is so small that the analytic extensions of $G$ and $G^\star$ beyond the boundary curves $\partial D$ and $\partial D^\star$, which can be found by the Schwarz reflection principle, are defined in a fixed region including the closures of both $D$ and $D^\star$. 

We use complex numbers $z$, $\zeta$ to indicate points inside the domain $D$, with, for example, z$=x_1+ix_2$, and we adopt the normalization
$$G(z,\zeta)=-\dfrac{1}{2\pi}\log|z-\zeta|+\ldots$$
of the singularity of the Green's function. Thus Green's theorem gives
$$\int\int_{D}\nabla G\cdot\nabla G^\star dx_1dx_2=-\int_{\partial D} G\dfrac{\partial G^\star}{\partial \nu} ds+G(w,\zeta)=G(w,\zeta)$$
and
$$\int\int_{D^\star}\nabla G\cdot\nabla G^\star dx_1dx_2=-\int_{\partial D^\star}\dfrac{\partial G}{\partial \nu}G^\star ds+G^\star(\zeta,w)=G^\star(\zeta,w),$$
where
$$G=G(z,\zeta),\quad G^\star=G^\star(z,w).$$
Comparig these results with \eqref{had2}, we conclude that
$$G^\star(\zeta,w)-G(\zeta,w)=-\int_{\partial D}\dfrac{\partial G}{\partial \nu}\dfrac{\partial G^\star}{\partial \nu}\delta\nu ds+O(\epsilon^2)$$
because of the symmetry of $G$. Since the difference between $G^\star$ and $G$ is of the order of magnitude $\epsilon$, we can replace the normal derivative $\dfrac{\partial G^\star}{\partial \nu}$ on the right by $\dfrac{\partial G}{\partial \nu}$ to establish the announced Hadamard formula
\begin{equation}
\label{had3}
\displaystyle\delta G(\zeta,w)=-\int_{\partial D}\dfrac{\partial G}{\partial \nu}(z,\zeta)\dfrac{\partial G}{\partial \nu}(z,w)\delta\nu ds
\end{equation}
for the first variation $\delta G$ of the Green's function $G$.

A rigorous mathematical analysis of Hadamard's formula requiring fewer assumptions about the boundary $\partial D$ can be made by introducing the notion of interior variations of the domain $D$. Let 
$$S_j=S_j(x_1,x_2),\quad j=1,2,$$
stand for a pair of functions possessing piece-wise continuous partial derivatives of a conveniently high order in a neighborhood of the closure of $D$. For small enough choices of the parameter $\epsilon$ the transformation
\begin{equation}
\label{had4}
x^\star_j=x_j+\epsilon S_j,\quad j=1,2,
\end{equation}  
is one-to-one in $D$ and maps that region onto a nearby domain $D^\star$. We refer to \eqref{had4} as an interior variation because it is defined inside $D$ as well as on the boundary curves $\partial D$. Our intention is to develop the Green's function $G^\star$ of $D^\star$ as a perturbation series in powers of $\epsilon$. For that purpose we shall use the change of variables \eqref{had4} to express it as a function of $x_1$ and $x_2$ over the original domain region $D$, where it will be found to satisfy a linear elliptic partial differential equation with coefficients involving $\epsilon$. The variations of $G^\star$ under changes in these coefficients will suffice to specify in a precise fashion how the Green's function depends in its domain of definition. We employ the complex notation
\begin{equation}
\label{had5}
z^\star=z+\epsilon F(z,\overline{z})
\end{equation} 
for the interior variation \eqref{had4}, where
\begin{equation}
\label{had6}
F(z,\overline{z})=S_1(x_1,x_2)+iS_2(x_1,x_2).
\end{equation}
Similarly, we put
$$\zeta^\star=\zeta+\epsilon F(\zeta,\overline{\zeta}),\quad w^\star=w+\epsilon F(w,\overline{w}).$$
Our aim is to determine a partial differential equation in $D$ for the transformed Green's function 
$$g(z,\zeta;\epsilon):=G^\star(z^\star,\zeta^\star),$$
whose principal arguments are the real and imaginary parts $x_1$ and $x_2$ of the variable $z$.

Since Laplace's equation
$$\Delta G^\star=0$$
is the Euler equation for an extremal problem of the form
$$\displaystyle\int\int\left[\sum_{j=1}^2\left(\dfrac{\partial G^\star}{\partial x_j^\star}\right)^2\right]dx_1^\star dx_2^\star=minimum$$
in the calculus of variation, $g$ must satisfy the Euler equation of the transformed problem
$$\displaystyle\int\int\left[\sum_{j=1}^2\left(\sum_{k=1}^2\dfrac{\partial g}{\partial x_k}\dfrac{\partial x_k}{\partial x_j^\star}\right)^2\right]\dfrac{\partial (x_1^\star,x^\star_2)}{\partial(x_1,x_2)}dx_1dx_2=minimum.$$
Standard procedures show that the latter equation is 
\begin{equation}
\label{had7}
L_\epsilon[g]=0,
\end{equation}
where $L_\epsilon$ represents the self-adjoint differential operator defined by
\begin{equation}
\label{had8}
\displaystyle L_\epsilon[g]=\sum_{k,l=1}^2\dfrac{\partial}{\partial x_k}\left(A_{kl}\dfrac{\partial g}{\partial x_l}\right),
\end{equation}
$$\displaystyle A_{kl}=\dfrac{\partial(x_1^\star,\partial x_2^\star)}{\partial(x_1,x_2)}\sum_{j=1}^2\dfrac{\partial x_k}{\partial x_j^\star}\dfrac{\partial x_l}{\partial x_j^\star}$$
depending on the parameter $\epsilon$. Note, incidentally, that
\begin{equation}
\label{had9}
A_{11}A_{22}-A_{21}A_{12}=1.
\end{equation}
We intend to derive a Fredholm integral equation for $g$ which will enable us to expand it in a Neumann series each of whose terms is multiplied by a successively higher power of $\epsilon$. The integral equation in question will be based on a parametrix for the linear elliptic differential equation \eqref{had7}. For present purposes we shall need the inverse matrix of elements $a_{jk}=a_{jk}(z)$ such that
$$\sum_{k=1}^2a_{jk}A_{kl}=\delta_{jl}.$$
Let us introduce the quadratic form
\begin{equation}
\label{had10}
\displaystyle\Gamma(z,\zeta)=\sum_{j,k=1}^2a_{jk}(\zeta)(x_j-\xi_j)(x_k-\xi_k),
\end{equation}
where $\xi_1$ and $\xi_2$ stand for the real and imaginary parts of $\zeta$. We denote by $\alpha=\alpha(z,\zeta)$ a fixed function of the two points $z$ and $\zeta$ in $D$ which possesses continuous derivatives of an appropriate order, which fulfills the boundary condition $\alpha(z,\zeta)=0$ when either $z$ or $\zeta$ lies on $\partial D$, but which has the value 
$$\alpha(\zeta,\zeta)=\dfrac{1}{4\pi}$$
when $z$ coincides with $\zeta$ inside $D$. The expression
$$P_\epsilon=\alpha(z,\zeta)\log\left(\dfrac{1}{\Gamma(z,\zeta)}\right)$$
then defines the required parametrix $P_\epsilon$.

Direct computation shows that 
\begin{equation}
\label{had11}
L_\epsilon[P_\epsilon]=O\left(\dfrac{1}{r}\right),\,r=|z-\zeta|.
\end{equation}
Thus, in particular, the quantity on the left of \eqref{had11} is integrable over the domain $D$. In a similar way it is found that
\begin{equation}
\label{had12}
L_\epsilon[P_\epsilon]-L_0[P_0]=O\left(\dfrac{\epsilon}{r}\right).
\end{equation}
Finally we shall have occasion in what follows to exploit the estimate
\begin{equation}
\label{had13}
L_\epsilon[G-P_0]-L_0[G-P_0]=O\left(\dfrac{\epsilon}{r}\right),
\end{equation}
which is obviously true for reasonable choices of the factor $\alpha$.

For the self adjoint differential operator \eqref{had8} Green's identity takes the form
\begin{equation}
\label{had14}
\displaystyle\int\int_D\{vL_\epsilon[u]-uL_\epsilon[v]\}dx_1dx_2=\int_{\partial D}B_\epsilon[u,v]ds,
\end{equation}
where
$$B_\epsilon[u,v]=\sum_{k,l=1}^2A_{kl}\{u\dfrac{\partial v}{\partial x_k}\dfrac{\partial x_l}{\partial \nu}-v\dfrac{\partial u}{\partial x_k}\dfrac{\partial x_l}{\partial \nu}\}.$$
We put
$$u=G(z,\zeta)-P_0(z,\zeta)+P_\epsilon(z,\zeta)$$
and
$$v=g(z,w;\epsilon)$$
in
\eqref{had14}, after removing from $D$ small circles about the singular points $\zeta$ and $w$ which are later allowed to disappear. The limits of the contributions from these circles may be evaluated in a standard fashion, with note made that $g$ has the same logarithmic infinity as $P_\epsilon$ and that 
$$a_{11}a_{22}-a_{12}a_{21}=1$$
because of \eqref{had9}. We are thus led to the representation 
\begin{equation}
\label{had15}
\displaystyle g(\zeta,w;\epsilon)=G(w,\zeta)-P_0(w,\zeta)+P_\epsilon(w,\zeta)+\int\int_Dg(z,w;\epsilon)L_\epsilon[G-P_0+P_\epsilon]dx_1dx_2
\end{equation}
of the transformed Green's function $g$, since the equations \eqref{had7} holds throughout $D$ and the boundary conditions
$$g=G=P_\epsilon=P_0=0$$
apply along $\partial D$.

We interpret \eqref{had15} as a Fredholm integral equation for $g$ with the known kernel
$$K(z,\zeta;\epsilon)=L_\epsilon[G(z,\zeta)-P_0(z,\zeta)+P_\epsilon(z,\zeta)].$$
The relations \eqref{had12} and \eqref{had13} provide the simpler restriction
$$K=O\left(\dfrac{\epsilon}{r}\right)$$
on the growth of $K$ which shows that it is integrable and that the Neumann series obtained by iterating \eqref{had15} has to converge for sufficiently small values of the parameter $\epsilon$. Moreover, each successive term of the series contains the factor $\epsilon$ more times than does its predecessor. We conclude, in particular, that
$$g-G=O(\epsilon).$$
Hence we can replace $g$ by $G$ on the right in \eqref{had15} to derive the perturbation formula
\begin{equation}
\label{had16}
\begin{split}
\displaystyle G^\star(\zeta^\star,w^\star)-g(\zeta,w)&=P_\epsilon(w,\zeta)-P_0(w,\zeta)\\
&+\int\int_DG(z,w)K(z,\zeta;\epsilon)dx_1dx_2+O(\epsilon^2)
\end{split}
\end{equation}  
for the Green's function, where on the left the definition of $g$ and the symmetry of $g$ have been taken into account.

Our purpose will be to simplify \eqref{had16} and to develop from it a rigorous verification of the Hadamard variational formula \eqref{had3}. Let
$$\delta A_{jk}=\dfrac{\partial A_{jk}}{\partial \epsilon}\epsilon,$$
where the derivative on the right is to be evaluated at $\epsilon=0$. An application of Green's theorem shows that
\begin{equation}
\begin{split}
\label{had17}
\displaystyle\int\int_D G(z,w)K(z,\zeta;\epsilon)dx_1dx_2&=G(\zeta,w)-\int\int_D\sum_{k,l=1}^2A_{jk}\dfrac{\partial G}{\partial x_j}\dfrac{\partial}{x_k}[G-P_0+P_\epsilon]dx_1dx_2\\
&=G(w,\zeta)-\int\int_D\sum_{j=1}^2\dfrac{\partial G}{\partial x_j}\dfrac{\partial}{\partial x_j}[G-P_0+P_\epsilon]dx_1dx_2\\
&-\int\int\sum_{j,k=1}^2\delta A_{jk}\dfrac{\partial G}{\partial x_j}\dfrac{\partial}{\partial x_k}[G-P_0+P_\epsilon]dx_1dx_2+O(\epsilon^2),
\end{split}
\end{equation} 
since $A_{jk}=1$ when $\epsilon=0$. On the other hand, an additional integration by parts gives using the boundary conditions $g=G=P_\epsilon=P_0=0$ along $\partial D$,
$$\int\int_D\sum_{j=1}^2\dfrac{\partial G}{\partial x_j}\dfrac{\partial}{\partial x_j}[G-P_0+P_\epsilon]dx_1dx_2=G(w,\zeta)-P_0(w,\zeta)+P_\epsilon(w,\zeta),$$and it evident that
$$\int\int_D\sum_{j=1}^2\delta A_{jk}\dfrac{\partial G}{\partial x_j}\dfrac{\partial}{\partial x_k}[-P_0+P_\epsilon]dx_1dx_2=O(\epsilon^2).$$
Therefore \eqref{had16} combines with \eqref{had17} to yield the more elegant variational formula
\begin{equation}
\label{had18}
\displaystyle G^\star(\zeta^\star,w^\star)-G(\zeta,w)=-\int\int_D\sum_{j,k=1}^2\delta A_{jk}\dfrac{\partial G(z,\zeta)}{\partial x_j}\dfrac{\partial G(z,w)}{\partial x_k}dx_1dx_2+O(\epsilon^2),
\end{equation}
from which the parametrix $P_\epsilon$ has been eliminated.

By direct calculation we find that
$$\dfrac{\partial x_k}{\partial x_j^\star}=\delta_{jk}-\epsilon\dfrac{\partial S_k}{\partial x_j}+O(\epsilon^2)$$
and
$$\dfrac{\partial(x_1^\star,x_2^\star)}{\partial(x_1,x_2)}=1+\epsilon\left(\dfrac{\partial S_1}{\partial x_1}+\dfrac{\partial S_2}{\partial x_2}\right)+O(\epsilon^2),$$
whence
\begin{equation}
\label{had19}
\delta A_{jk}=\epsilon\{\delta_{jk}\left(\dfrac{\partial S_1}{\partial x_1}+\dfrac{\partial S_2}{\partial x_2}\right)-\dfrac{\partial S_j}{\partial x_k}-\dfrac{\partial S_k}{\partial x_j}\}.
\end{equation}
substituting \eqref{had19} into \eqref{had18} and rearranging terms appropriately, we obtain
\begin{equation}
\label{had20}
\displaystyle G^\star(\zeta^\star,w^\star)-G(\zeta,w)=\epsilon\int\int_D\sum_{j,k=1}^2T_{jk}\dfrac{\partial S_j}{\partial x_k}dx_1dx_2+O(\epsilon^2),
\end{equation}
\begin{equation}
\label{had21}
\begin{split}
T_{jk}=T_{jk}(z;\zeta,w)&=\dfrac{\partial G(z,\zeta)}{\partial x_j}\dfrac{\partial G(z,w)}{\partial x_k}+\dfrac{\partial G(z,w)}{\partial x_j}\dfrac{\partial G(z,\zeta)}{\partial x_k}\\
&-\delta_{jk}\left(\dfrac{\partial G(z,\zeta)}{\partial x_1}\dfrac{\partial G(z,w)}{\partial x_1}+\dfrac{\partial G(z,\zeta)}{\partial x_2}\dfrac{\partial G(z,w)}{\partial x_2}\right)
\end{split}
\end{equation}
is a generalization of the Maxwell stress tensor. Because $G$ is a harmonic function of the variables $x_1$ and $x_2$, the identities
$$\displaystyle\sum_{k=1}^2\dfrac{\partial T_{jk}}{\partial x_k}=0,\quad j=1,2,$$
are seen to hold throughout $D$. Thus an integration by parts enables us to bring \eqref{had20} in the form
\begin{equation}
\label{had22}
\begin{split}
\displaystyle G^\star(\zeta^\star,w^\star)-G(\zeta,w)&=\epsilon\sum_{j=1}^2\left[\dfrac{\partial G(\zeta,w)}{\partial \xi_j}S_j(\xi_1,\xi_2)+\dfrac{\partial G(\zeta,w)}{\partial u_j}S_j(u_1,u_2)\right]\\
&-\int_{\partial D}\sum_{j,k=1}^2T_{jk}(z;\zeta,w)S_{j}(x_1,x_2)\dfrac{\partial x_k}{\partial\nu}ds+O(\epsilon^2)
\end{split}
\end{equation}
involving only a boundary integral, where $u_1$ and $
u_2$ stand for the real and imaginary parts of $w$.

According to Taylor's theorem we have
$$\displaystyle G^\star(\zeta^\star,w^\star)-G^\star(\zeta,w)=\epsilon\sum_{j=1}^2\left[\dfrac{\partial G^\star(\zeta,w)}{\partial \xi_j}S_j(\xi_1,\xi_2)+\dfrac{\partial G^\star(\zeta,w)}{\partial u_j}S_j(u_1,u_2)\right]+O(\epsilon^2).$$ 
Moreover $G^\star$ can be replaced by $G$ on the right, because of \eqref{had22}. Therefore \eqref{had22} actually reduces to the simpler formula
\begin{equation}
\label{had23}
\displaystyle\delta G=-\epsilon\int_{\partial D}\sum_{j,k=1}^2T_{jk}S_j\dfrac{\partial x_k}{\partial \nu}ds
\end{equation}
for the first order perturbation 
$$\displaystyle\delta G=\epsilon\lim_{\epsilon\to 0}\dfrac{G^\star(\zeta,w)-G(\zeta,w)}{\epsilon}$$
of the Green's function $G$. 
\begin{rem}
It is natural to ascribe to \eqref{had23} the physical interpretation that the variation $\delta G$ in potential energy, viewed as work done, must be proportional to a force, represented by the stress tensor $T_{jk}$, times the displacement defined by the vector $(S_1,S_2)$. \end{rem}
Finally if we insert the expression 
$$\rho=S_1\dfrac{\partial x_1}{\partial\nu}+S_2\dfrac{\partial x_2}{\partial \nu}$$
for the normal shift of the boundary curves $\partial D$ into \eqref{had1} and exploit the boundary condition $G=0$ when calculating $T_{jk}$ along $\partial D$, we find that \eqref{had23} is equivalent to Hadamard's variational formula \eqref{had3}.
\begin{rem}
In fluids at rest pressure is the only contact force. For solids at rest or in motion, this is in general false. Beside pressure like forces acting along the normal to a contact surface, there may also be shear forces acting tangentially to it. In complete analogy with pressure, the relevant quantity turns out to be the shear stress, defined to be the shear force per unit of area. For instance friction is part of shear stresses. In three dimensions we have Cauchy's stress hypothesis which expresses the infinitesimal force $d\mathscr{F}$ exerted on an element of surface $dS$, with the stress tensor $\sigma$ and $dS$:
$$d\mathscr{F}=\sigma\cdot dS$$
with
$$\sigma=\left(\begin{array}{ccc}\sigma{xx} & \sigma{xy} & \sigma{xz} \\\sigma{yx} & \sigma{yy} & \sigma{yz} \\\sigma{zx} & \sigma{zy} & \sigma{zz}\end{array}\right),\quad dS=(dS_x,dS_y,dS_z).$$
Under stress fluids flow, whereas solids deform.

Suppose we act on the external surface of a body, with some distribution of contact forces under control. Imagine now that the material is displaced by an infinitesimal displacement, so as to neglect the effect of internal forces. Let $\delta u=\delta u(x)$, the displacement vector. The work of all external forces is then
$$\displaystyle \delta W=\int_S\delta u\cdot\sigma\cdot dS.$$
If the constraints are plane, meaning that $\sigma{xz}=\sigma{zx}=\sigma{yz}=\sigma{zy}=\sigma{zz}=0$ and $S$ is a plane surface, then we get a formula analogous to \eqref{had23}.  
\end{rem}
\chapter{Symplectic Geometry}
\label{chap3}
In this chapter we refer to \citep[part I]{mcduff}. We have seen that if $H(x),\,x=(q,p)$ is a Hamiltonian function on $\R^{2n}$, then 
$$\displaystyle\dot{q}=H_p(q,p),\quad \dot{p}=-H_q(q,p),\quad p=(p_1,\cdots,p_n),\, q=(q_1,\cdots,q_n)$$
or
\begin{equation}\displaystyle\left(\begin{array}{c}\dot{q} \\\dot{p}\end{array}\right)=\left(\begin{array}{cc}0 & I_n \\-I_n & 0\end{array}\right)\left(\begin{array}{c}H_q \\H_p\end{array}\right).\end{equation}
Let $\mathbf{V}$ be an $m$-dimensional vector space over $\R$ and let $\Omega:\mathbf{V}\times \mathbf{V}\to\R$ be a bilinear map. The map $\Omega$ is skew-symmetric if
$$\displaystyle\Omega(u,v)=-\Omega(v,u),\,\forall u,v\in V\iff\Omega(u,u)=0,\forall u\in V.$$
\begin{theo}[Standard form for skew-symmetric bilinear maps] Let $\Omega$ be a skew-symmetric bilinear map on $\mathbf{V}$. Then there is a basis $\displaystyle B=\{u_1,\cdots,u_k,e_1,\cdots,e_n,f_1,\cdots,f_n\}$ of $\mathbf{V}$ such that:
\begin{enumerate}
\item $\Omega(u_i,v)=0$ for all $i$ and all $v\in \mathbf{V}$.
\item $\Omega(e_i,e_j)=\Omega(f_i,f_j)$ for all $i$ and all $j$.
\item $\Omega(e_i,f_j)=\delta_{ij}$, with $\delta_{ij}$ the Kronecker symbol. 
\end{enumerate}
\end{theo}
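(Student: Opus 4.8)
The plan is to argue by induction on $m=\dim\V$, first stripping off the degenerate directions and then peeling hyperbolic planes off the nondegenerate remainder one at a time, in the manner of a symplectic Gram--Schmidt.

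First I would isolate the degenerate directions. Set $U=\{u\in\V:\Omega(u,v)=0\text{ for all }v\in\V\}$, the radical of $\Omega$; this is a linear subspace, and I choose any basis $u_1,\dots,u_k$ of it, which automatically satisfies condition (1). Next I pick a vector-space complement $W$ with $\V=U\oplus W$. The key preliminary claim is that $\Omega$ restricts to a \emph{nondegenerate} form on $W$: if $w\in W$ satisfies $\Omega(w,w')=0$ for all $w'\in W$, then since every $v\in\V$ splits as $v=u+w'$ with $u\in U$ and $\Omega(w,u)=0$ by definition of $U$, we get $w\in U\cap W=\{0\}$. This reduces the whole problem to producing a standard basis on a space carrying a nondegenerate skew form.

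The heart of the argument is the inductive peeling on $W$. If $W=\{0\}$ there is nothing to do. Otherwise I choose any nonzero $e_1\in W$; nondegeneracy furnishes a vector pairing nontrivially with $e_1$, and after rescaling I obtain $f_1\in W$ with $\Omega(e_1,f_1)=1$. These are independent, since a relation $f_1=\lambda e_1$ would force $\Omega(e_1,f_1)=\lambda\,\Omega(e_1,e_1)=0$. Let $W_1=\spa(e_1,f_1)$ and let $W_1^{\Omega}=\{w\in W:\Omega(w,e_1)=\Omega(w,f_1)=0\}$ be its $\Omega$-orthogonal complement. I would then verify the splitting $W=W_1\oplus W_1^{\Omega}$: the explicit assignment $w\mapsto w_1:=\Omega(w,f_1)\,e_1-\Omega(w,e_1)\,f_1$ lands in $W_1$ and leaves $w-w_1$ in $W_1^{\Omega}$ (a two-line check using $\Omega(e_1,f_1)=1$ and $\Omega(e_1,e_1)=\Omega(f_1,f_1)=0$), while $W_1\cap W_1^{\Omega}=\{0\}$ follows by pairing a general element $ae_1+bf_1$ against $e_1$ and $f_1$. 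Because this decomposition is $\Omega$-orthogonal and $\Omega$ is nondegenerate on both $W$ and $W_1$, its restriction to $W_1^{\Omega}$ is again nondegenerate; the inductive hypothesis, applied to the strictly smaller space $W_1^{\Omega}$, then yields vectors $e_2,\dots,e_n,f_2,\dots,f_n$ satisfying (2) and (3) among themselves, with trivial radical. Adjoining $e_1,f_1$ and using that they pair trivially with everything in $W_1^{\Omega}$ completes the basis $B$.

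I do not expect a serious obstacle, as the content is purely linear-algebraic and the induction is clean. The one place demanding care is the splitting lemma $W=W_1\oplus W_1^{\Omega}$ together with the inheritance of nondegeneracy by $W_1^{\Omega}$ — this is precisely what makes the induction go through and is the symplectic analogue of the Gram--Schmidt step. I would close by recording that (2) and (3) hold globally: $\Omega(e_i,f_j)=\delta_{ij}$ because distinct hyperbolic planes are mutually $\Omega$-orthogonal by construction, and $\Omega(e_i,e_j)=\Omega(f_i,f_j)=0$ for the same reason, with the diagonal terms vanishing by skew-symmetry.
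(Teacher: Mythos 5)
Your proposal is correct and follows essentially the same route as the paper's proof: split off the radical $\mathbf{U}$, choose a complement $\mathbf{W}$, and inductively peel off hyperbolic planes $\mathbf{W}_1=\spa\{e_1,f_1\}$ using the same explicit projection $w\mapsto\Omega(w,f_1)e_1-\Omega(w,e_1)f_1$ to get $\mathbf{W}=\mathbf{W}_1\oplus\mathbf{W}_1^{\Omega}$. Your only additions --- explicitly checking that $\Omega$ restricts nondegenerately to $\mathbf{W}$ and that $\mathbf{W}_1^{\Omega}$ inherits nondegeneracy --- are points the paper uses implicitly, so they tighten rather than change the argument.
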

\begin{proof}
We apply the skew version of the Gram-Schmidt orthogonalization process. Let
$$\displaystyle \mathbf{U}=\ker(\Omega):=\{u\in \mathbf{V},\Omega(u,v)=0,\forall v\in \mathbf{V}\}.$$
We choose a basis $\{u_1,\cdots,u_k\}$ of $\mathbf{U}$ and a complementary subspace $\mathbf{W}$ to $\mathbf{U}$ in $\mathbf{V}$:
$$\mathbf{V}=\mathbf{U}\oplus \mathbf{W}.$$
Consider any $e_1\in \mathbf{W},\,e_1­0.$ Then there exists an $f_1\in \mathbf{W}$ verifying $\Omega(e_1,f_1)­0$ (by definition). We can assume that $\Omega(e_1,f_1)=1$. Let 
$$\displaystyle \mathbf{W}_1=\spa\{e_1,f_1\}\quad \&\quad \mathbf{W}_1^\Omega=\{w\in \mathbf{W}\,/\, \Omega(w,v)=0,\forall v\in \mathbf{W}_1\}.$$
Claim $1$: $\mathbf{W}_1\cap \mathbf{W}_1^\Omega=\{0\}$. Indeed if $v=ae_1+bf_1\in \mathbf{W}_1\cap \mathbf{W}_1^\Omega$, then 
$\displaystyle\begin{cases}
\Omega(v,e_1)&=a\Omega(e_1,e_1)+b\Omega(f_1,e_1)=-b=0\\
\Omega(v,f_1)&=a\Omega(e_1,f_1)+b\Omega(f_1,f_1)=a=0.\qquad 
\end{cases}$
$\Longrightarrow v=0.$

\noindent Claim $2$: $\mathbf{W}=\mathbf{W}_1\oplus \mathbf{W}_1^\Omega$. In fact if $v\in \mathbf{W}$ and we define
$$c:=\Omega(v,e_1),\,d:=\Omega(v,f_1),$$
then
$$v=(-cf_1+de_1)+(v+cf_1-de_1)$$
with $(-cf_1+de_1)\in \mathbf{W}_1$ and one verifies that $(v+cf_1-de_1)\in \mathbf{W}_1^\Omega.$ Hence the claim.

\noindent We proceed in the same way this time with $\mathbf{W}_1^\Omega$: we choose $e_2\in \mathbf{W}_1^\Omega$ with $e_2­0$ and $f_2\in \mathbf{W}_1^\Omega$ with $\Omega(e_2,f_2)=1$. We then consider $\mathbf{W}_2=\spa\{e_2,f_2\}$ and so on. The process eventually stops as $\dim \mathbf{V}<+\infty$. Thus
$$\displaystyle \mathbf{V}=\mathbf{U}\oplus \mathbf{W}_1\oplus \mathbf{W}_2\oplus\cdots\oplus \mathbf{W}_n$$
where all the summands are orthogonal with respect to $\Omega$ and where $\mathbf{W}_i$ has a basis given by $\{e_i,f_i\}$ and $\Omega(e_i,f_i)=1$.
\end{proof}
\begin{rem}
If $k=\dim \mathbf{U}$ then $\dim \mathbf{V}=k+2n$. Both n and k are invariants of $(\mathbf{V},\Omega)$ i.e. they are independent of the choice of a basis. The integer $2n$ is called the rank of $\Omega$. 
\end{rem}
\begin{rem}
Let $\mathbf{V}$ be a real vector space of dimension $m$ and $\Omega:\mathbf{V}\times \mathbf{V}\to \R$ a bilinear map. One defines
$$\displaystyle\widetilde{\Omega}:\mathbf{V}\longrightarrow \mathbf{V}^\star,\quad \widetilde{\Omega}(v)(u)=\Omega(v,u).$$
The kernel of $\widetilde{\Omega}$ is the previously defined subspace $\displaystyle \mathbf{U}=\{u\in \mathbf{V},\Omega(u,v)=0,\forall v\in \mathbf{V}\}$
\end{rem}
\begin{defi}
A skew symmetric form $\Omega$ is called a symplectic (or non-degenerate) form when $\widetilde{\Omega}$ is is bijective i.e. $\mathbf{U}=\{0\}$. In this case the couple $(\mathbf{V},\Omega)$ is called a symplectic vector space.
\end{defi}
\begin{rem}
When $(\mathbf{V},\Omega)$  is symplectic, $\mathbf{U}=\{0\}$ and $\dim \mathbf{V}=2n$. Also $\mathbf{V}$ admits a basis 
$$\mathscr{B}=\{e_1,\cdots,e_n,f_1,\cdots,f_n\}$$
with
$$\Omega(e_i,f_j)=\delta_{ij},\quad \Omega(e_i,e_j)=\Omega(f_i,f_j)=0.$$
This basis is called a symplectic basis of $(\mathbf{V},\Omega)$. In coordinates one has
\begin{equation}
\begin{split}
\Omega(u,v)&=(u_1,\cdots,u_n,u_{n+1},\cdots,u_{2n})\left(\begin{array}{cc}0 & -I_n \\I_n & 0\end{array}\right)\left(\begin{array}{c}v_1 \\\vdots \\v_n \\ v_{n+1} \\ \vdots\\ v_{2n}\end{array}\right)\\
&=u_1v_{n+1}+u_2v_{n}+\cdots+u_nv_{2n}-u_{n+1}v_{1}-\cdots-u_{2n}v_{n}.
\end{split}
\end{equation}
The prototype of a symplectic manifold is $(R^{2n},\Omega_0)$ with $\Omega_0$ given in coordinates by the previous calculation. A symplectic basis of it is given by the canonical basis of $\R^{2n}$. 
\end{rem}
\begin{defi}
A symplectomorphism $\varphi$ between symplectic vector spaces $(\mathbf{V},\Omega)$ and $(\mathbf{V}^\prime,\Omega^\prime)$ is a linear isomorphism $\varphi:\mathbf{V}\to \mathbf{V}^\prime$ such that $\varphi^\star\Omega^\prime=\Omega$ (by definition $(\varphi^\star\Omega^\prime)(u,v)=\Omega^\prime(\varphi(u),\varphi(v))).$
\end{defi}
\section{Symplectic Manifolds}
Let $\omega$ be a $2$-form on a manifold $\mathbf{M}$, this means that for each $p\in \mathbf{M}$
$$\omega_p: T_p\mathbf{M}\times T_p\mathbf{M}\to \R$$
is a skew-symmetric bilinear form on the tangent space to $\mathbf{M}$ at $p$, and that $\omega_p$ varies smoothly with $p$.
\begin{defi}
The $2$-form $\omega$ is symplectic if $\omega$ is closed and $\omega_p$ is symplectic for each $p\in \mathbf{M}$.
\end{defi}
\begin{rem}
If $\omega$ is symplectic then $\dim \mathbf{M}=\dim T_p\mathbf{M}$ must be even.
\end{rem}
\begin{defi}
A symplectic manifold is a pair $(\mathbf{M},\omega)$ where $\mathbf{M}$ is a manifold and $\omega$ a symplectic form on $\mathbf{M}$.
\end{defi}
\begin{ex}
Let $\mathbf{M}=\R^{2n}$ with coordinates $x_1,\cdots,x_n, y_1,\cdots,y_n$. The form 
$$\displaystyle\omega_0=\sum_1^ndx_i\wedge dy_i$$
is symplectic and
$$\displaystyle \bigg\{\dfrac{\partial}{\partial x_1}_{|_p},\cdots,\dfrac{\partial}{\partial x_n}_{|_p},\dfrac{\partial}{\partial y_1}_{|_p},\cdots,\dfrac{\partial}{\partial y_n}_{|_p}\bigg\}$$
is the symplectic basis on $T_p\mathbf{M}$.
\end{ex}
\begin{ex}
Let $\mathbf{M}=\C^n$ with coordinates $z_1,\cdots,z_n$, the form $\displaystyle\omega_0=\dfrac{i}{2}\sum_1^ndz_k\wedge d\overline{z}_k$ is symplectic. It equals the previous one under the identification $\C^n\simeq\R^{2n},\, z_k=x_k+iy_k$.
\end{ex}
\begin{ex}
The unit sphere $\mathbf{M}=\mathbf{S}^2$ in $\R^3$ has a symplectic form
$$\displaystyle\omega_p(u,v)=\langle p,u\wedge v\rangle,\quad u,v\in T_p\mathbf{M}.$$
$T_p\mathbf{M}$ may be identified with the plane orthogonal to $p$. This form is closed because it is of top degree and it is non-degenerate because $\langle p,u\wedge v\rangle\not =­0$ when $u­\not=0$ and we take for instance $v=u\wedge p$.\end{ex}
\begin{defi}
Let $(\mathbf{M}_1,\omega_1)$ and $(\mathbf{M}_2,\omega_2)$ be two $2n$-dimensional symplectic manifolds and let $g:\mathbf{M}_1\to \mathbf{M}_2$ be a diffeomorphism. We say that $g$ is a symplectomorphism when $g^\star\omega_2=\omega_1$ i.e.
$$(g^\star\omega_2)_q(u,v)=(\omega_1)_{g(q) }(dg_q(u),dg_q(v))=(\omega_1)_q(u,v).$$
\end{defi}
The theorem of Darboux says that locally the dimension is the only invariant of a symplectic manifold up to diffeomorphism.
\begin{theo}[Darboux]
Let $(\mathbf{M},\omega)$ be a $2n$-dimensional symplectic manifold and let $p\in\mathbf{M}$. Then there exists an open set $U$ and coordinates $x_1,\cdots,x_n,y_1,\cdots,y_n$ on $U$, centered at $p$ such that
$$\displaystyle\omega|_U=\sum_1^ndx_i\wedge dy_i.$$
\end{theo}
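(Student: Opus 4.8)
The plan is to use Moser's deformation method, reducing the statement to the pointwise linear normal form already established for skew-symmetric bilinear maps. First I would choose any chart centered at $p$ identifying a neighborhood of $p$ with an open set $\mathcal{O}\subset\R^{2n}$, sending $p$ to the origin. On $\mathcal{O}$ the form $\omega$ becomes a closed, non-degenerate $2$-form, and its value $\omega_p$ at the origin is a symplectic bilinear form on $\R^{2n}$. By the standard form theorem for skew-symmetric bilinear maps proved above, after a further \emph{linear} change of coordinates I may assume
$$\omega_p=\sum_{i=1}^n dx_i\wedge dy_i=:(\omega_0)_p.$$
Thus I arrive at two closed $2$-forms on $\mathcal{O}$: the constant-coefficient form $\omega_0=\sum_i dx_i\wedge dy_i$ and the given $\omega$, which agree at the origin.

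Next I would interpolate between them. Set $\omega_t=\omega_0+t(\omega-\omega_0)$ for $t\in[0,1]$. Each $\omega_t$ is closed, and at the origin $(\omega_t)_0=(\omega_0)_0$ is non-degenerate; by continuity of the determinant of its matrix, $\omega_t$ remains non-degenerate, hence symplectic, for every $t\in[0,1]$ on a possibly smaller ball $B$ around the origin. The goal is to produce a time-dependent vector field $X_t$ on $B$ whose flow $\psi_t$ satisfies $\psi_t^\star\omega_t=\omega_0$ with $\psi_0=\Id$. Differentiating this identity in $t$ and using the formula for the derivative of a pull-back along a flow, the requirement becomes the \emph{homotopy equation}
$$\mathcal{L}_{X_t}\omega_t+\dfrac{d}{dt}\omega_t=\mathcal{L}_{X_t}\omega_t+(\omega-\omega_0)=0.$$

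To solve it I would first invoke the Poincar\'e lemma: since $\omega-\omega_0$ is closed on the ball $B$, which is contractible, there is a $1$-form $\mu$ with $d\mu=\omega-\omega_0$, and because $\omega-\omega_0$ vanishes at the origin I can choose $\mu$ so that $\mu_0=0$. Using Cartan's magic formula $\mathcal{L}_{X_t}=d\,\iota_{X_t}+\iota_{X_t}\,d$ together with $d\omega_t=0$, the homotopy equation reduces to
$$d\left(\iota_{X_t}\omega_t+\mu\right)=0,$$
and it suffices to solve the pointwise \emph{linear} equation $\iota_{X_t}\omega_t=-\mu$. Because $\omega_t$ is non-degenerate on $B$, this determines a unique smooth vector field $X_t$, depending smoothly on $t$; since $\mu_0=0$, it follows that $X_t(0)=0$ for all $t$.

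Finally I would integrate $X_t$. The main technical obstacle is ensuring the flow $\psi_t$ is defined for all $t\in[0,1]$ on a single neighborhood of the origin; this is exactly where $X_t(0)=0$ is needed, for then the origin is a stationary point of the non-autonomous flow, and by continuous dependence on initial conditions the flow exists on $[0,1]$ over a sufficiently small ball $B'\subset B$. On $B'$ the relation $\psi_t^\star\omega_t=\omega_0$ holds for all $t$; taking $t=1$ gives $\psi_1^\star\omega=\omega_0$. Composing the original chart with $\psi_1$ then yields coordinates $x_1,\cdots,x_n,y_1,\cdots,y_n$ centered at $p$ in which $\omega=\sum_{i=1}^n dx_i\wedge dy_i$, as claimed.
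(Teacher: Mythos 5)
Your proof is correct, but note that the paper itself states Darboux's theorem \emph{without} proof (it is quoted from the reference on symplectic topology and used only to justify the local model $\omega_0=\sum_i dx_i\wedge dy_i$), so there is no in-paper argument to compare against. What you give is the standard Moser deformation trick, and all the essential points are in place: the reduction to the linear case via the paper's standard-form theorem for skew-symmetric bilinear maps, the interpolation $\omega_t=\omega_0+t(\omega-\omega_0)$, the homotopy equation obtained by differentiating $\psi_t^\star\omega_t=\omega_0$, Cartan's formula reducing it to the pointwise algebraic equation $\iota_{X_t}\omega_t=-\mu$, and the role of $\mu_0=0$ in guaranteeing $X_t(0)=0$ so the non-autonomous flow exists on all of $[0,1]$ near the origin. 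Two small points deserve a word of justification if you write this up fully: the nondegeneracy of $\omega_t$ must hold \emph{uniformly} in $t$ on a fixed ball, which follows from compactness of $[0,1]$ and continuity of $(x,t)\mapsto\det\left((\omega_t)_x\right)$; and the normalization $\mu_0=0$ is always achievable, either by using the radial homotopy operator in the Poincar\'e lemma (whose primitive vanishes at the origin) or simply by subtracting from $\mu$ the constant-coefficient one-form equal to $\mu_0$, which is closed and so does not change $d\mu$. The alternative classical route, not taken here, is an induction on $n$ constructing conjugate coordinate functions one pair at a time via Hamiltonian flows; your Moser argument is shorter and is the one used in the paper's cited reference.
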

Hence locally every symplectic manifold is diffeomorphic to $\R^{2n}$ with linear coordinates $x_1,\cdots,x_n,y_1,\cdots,y_n$ and symplectic form 
$$\displaystyle\omega_0=\sum_1^ndx_i\wedge dy_i.$$
\section{Symplectic form on the Cotangent Bundle of a Manifold}
Let $\mathbf{X}$ be a manifold and $\mathbf{M}=T^\star\mathbf{X}$ its tangent bundle. If the manifold structure on $\mathbf{X}$ is described by coordinate charts $(U,x_1,\cdots,x_n)$ with $x_i:U\to\R$, then at any $x\in U$, the differentials $(dx_1)_x,\cdots,(dx_n)_x$ form a basis of $T^\star_x\mathbf{X}$. If $\xi\in T^\star_x\mathbf{X}$, then $\displaystyle \xi=\sum_1^n\xi_i(dx_i)_x$ for some real coordinates $\xi_1,\cdots,\xi_n$. This induces a map
\begin{equation}
\begin{split}
T^\star U&\longrightarrow\R^{2n}\\
(x,\xi)&\longrightarrow(x_1,\cdots,x_n,\xi_1,\cdots,\xi_n).
\end{split}
\end{equation}
The coordinates $x_1,\cdots,x_n,\xi_1,\cdots,\xi_n$ are the cotangent coordinates associated to the coordinates $x_1,\cdots,x_n$ of $U$. The transition functions on the overlaps are smooth: given two charts $(U,x_1,\cdots,x_n)$ and $(U^\prime,x_1^\prime,\cdots,x_n^\prime)$ let $x\in U\cap U^\prime$; if $\xi\in T^\star_x\mathbf{X}$ then
$$\displaystyle\xi=\sum_1^n\xi_i(dx_i)_x=\sum_{i,j}^n\xi_i\dfrac{\partial x_i}{\partial x^\prime_j}(dx_j^\prime)_x=\sum_1^n\xi^\prime_j(dx_j^\prime)_x$$
where
$$\displaystyle\xi^\prime_j=\sum_i\xi_i\dfrac{\partial x_i}{\partial x_j^\prime}$$
depends smoothly on the variables because $\mathbf{X}$ is a manifold. Thus $\mathbf{M}=T^\star\mathbf{X}$ is a $2n$-dimensional manifold.  

\noindent Let $(U,x_1,\cdots,x_n)$ be a coordinate chart of $\mathbf{X}$, with associated cotangent coordinates $(T^\star U,x_1,\cdots,x_n,\xi_1,\cdots,\xi_n)$. Define a $2$-form $\omega$ on $T^\star U$ by
$$\omega=\sum_1^ndx_i\wedge d\xi_i.$$
If $\displaystyle\alpha=\sum_1^n\xi_idx_i$, then $\omega=-d\alpha$. We are going to show that $\alpha$ is intrinsically defined (hence $\omega$ is also intrinsically defined). Indeed if we consider two overlapping coordinate charts of $\mathbf{M}$, $(T^\star U,x_1,\cdots,x_n,\xi_1,\cdots,\xi_n)$ and $(T^\star U^\prime,x_1^\prime,\cdots,x_n^\prime,\xi_1^\prime,\cdots,\xi_n^\prime)$, then on the intersection $T^\star U\cap T^\star U^\prime$ one has:
$$\displaystyle\xi_j^\prime=\sum_i\xi_i\dfrac{\partial x_i}{\partial x_j^\prime}.$$
Since
$$\displaystyle dx_j^\prime=\sum_i\dfrac{\partial x_j^\prime}{\partial x_i}dx_i$$
we have
$$\displaystyle\alpha=\sum_i\xi_idx_i=\sum_j\xi_j^\prime dx_j^\prime=\alpha^\prime.$$
This follows from the following identity
$$\displaystyle\sum_j\xi_j^\prime dx_j^\prime=\sum_j(\sum_i\xi_i\dfrac{\partial x_i}{\partial x_j^\prime})\sum_k\dfrac{\partial x_j^\prime}{\partial x_k}dx_k=\sum_i\xi_i(\sum_{j,k}\dfrac{\partial x_i}{\partial x_j^\prime}\dfrac{\partial x_j^\prime}{\partial x_k})dx_k=\alpha.$$
The $1$-form $\alpha$ is the tautological or Liouville $1$-form on $\mathbf{M}$ and the $2$-form $\omega$ is the canonical symplectic form on $\mathbf{M}$. We can give a coordinate free definition of $\alpha$: let 
\begin{equation}
\displaystyle
\begin{split}
\pi:\mathbf{M}=T^\star\mathbf{X}&\to \mathbf{X}\\
p=(x,\xi)&\to x,\quad \xi\in T^\star_x \mathbf{X}
\end{split}
\end{equation}
be the natural projection. The tautological $1$-form $\alpha$ may be defined pointwise by
$$\alpha_p=(d\pi_p)^t\xi \in T_p\mathbf{M}$$
with $d\pi_p^t$ the transpose of $d\pi_p$, that is $(d\pi_p)^t\xi=\xi\circ d\pi_p$. Equivalently
$$\alpha_p(v)=\xi((d\pi_p)v),\,v\in T_p\mathbf{M}.$$
The canonical symplectic $2$-form $\omega$ on $\mathbf{M}=T^\star\mathbf{X}$ is defined as $\omega=-d\alpha$ and locally:
$$\displaystyle\omega=\sum_i^ndx_i\wedge d\xi_i.$$
\chapter{Finslerian Geometry}
\label{chap{2}}
\section{Extremals}
We consider an open set $U\subsetñ\R^n,\,x=(x^1,\cdots,x^n)$ and a function 
$$f:U\times\R^n\to\R,\,\mathscr{C}^2.$$
Let $C:\left[a,b\right]\to U,\quad x^i=x^i(t),\,a²t²b$ be a $\mathscr{C}^1$ curve. Curves having fixed end points are called admissibles. We consider the integral
$$\displaystyle J(C)=\int_a^bf(x(t),\dot{x}(t))dt,\quad\dot{x}(t):=\dfrac{dx}{dt}.$$
Let
$$\displaystyle E_i(C)  =\dfrac{d}{dt}\left(\dfrac{\partial f}{\partial y^i}\right)-\dfrac{\partial f}{\partial x^i},\quad y^i:=\dot{x}^i$$
be the Euler-Lagrange equations. We are going to require that the admissible curves are regarded as oriented curves with increasing parameter $t$ and that the integral $J(C)$ is independent of the choice of such a parameter: $t=\varphi(\tau),\,\tau\in\left[c,d\right]\subset \R$ is a change of parameter with $\dfrac{d\varphi}{d\tau}>0$. Suppose that
$$\displaystyle \int_a^bf(x,\dfrac{dx}{dt})dt=\int_c^df(x,\dfrac{dx}{d\tau})d\tau,\quad a=\varphi \left(c\right),b=\varphi(d)$$
this gives 
$$\displaystyle f(x,\dfrac{dx}{dt})\dfrac{dt}{d\tau}=f(x,\dfrac{dx}{d\tau}).$$
In particular for $t=p\tau,\,p>0$ we have
$$\displaystyle pf(x,\dfrac{dx}{dt})=f(x,p\dfrac{dx}{dt}).$$
\begin{defi}
A function $g(u)$, $u=(u^1,\cdots,u^n)$ is positively homogeneous of degree $r$ if for every $p>0$ one has
$$g(pu)=p^rg(u).$$
\end{defi} 
\begin{theo}[Caratheodory]
\label{carath}
The integral $J(C)$ is independent of the choice of the parameter if and only if $f(x,y)$ is positively homogeneous of degree $1$ with respect to the variable $y$.
\end{theo}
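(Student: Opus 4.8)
The plan is to establish the two implications separately; the heart of the matter is translating between the integral identity and a pointwise functional equation in the fibre variable $y$.

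\textbf{Homogeneity implies parameter-independence.} Assume $f(x,py)=pf(x,y)$ for all $p>0$. Given a change of parameter $t=\varphi(\tau)$ with $\varphi^\prime>0$ and $a=\varphi(c)$, $b=\varphi(d)$, I would write $y=\dfrac{dx}{dt}$, so that $\dfrac{dx}{d\tau}=\varphi^\prime(\tau)\,y$. Applying the homogeneity hypothesis with $p=\varphi^\prime(\tau)>0$ gives $f\!\left(x,\dfrac{dx}{d\tau}\right)=\varphi^\prime(\tau)\,f\!\left(x,\dfrac{dx}{dt}\right)$. Substituting this into $\int_c^d f\!\left(x,\dfrac{dx}{d\tau}\right)d\tau$ and carrying out the change of variables $t=\varphi(\tau)$, $dt=\varphi^\prime(\tau)d\tau$, turns the integral into $\int_a^b f\!\left(x,\dfrac{dx}{dt}\right)dt$, so $J(C)$ is unchanged. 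This direction is a one-line computation once homogeneity is available.

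\textbf{Parameter-independence implies homogeneity.} Applying independence not merely on $[a,b]$ but on every sub-interval $[a,t]$ — equivalently, letting the upper endpoint vary — and differentiating in that endpoint via the fundamental theorem of calculus recovers the pointwise relation $f\!\left(x,\dfrac{dx}{dt}\right)\dfrac{dt}{d\tau}=f\!\left(x,\dfrac{dx}{d\tau}\right)$ already recorded in the text. Specializing to the linear reparametrization $t=p\tau$ with constant $p>0$, for which $\dfrac{dt}{d\tau}=p$ and $\dfrac{dx}{d\tau}=p\dfrac{dx}{dt}$, yields $p\,f\!\left(x,\dfrac{dx}{dt}\right)=f\!\left(x,p\dfrac{dx}{dt}\right)$ along the curve.

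The step demanding the most care is upgrading this last relation, which a priori only constrains $f$ along the velocity field of a given curve, to the genuine pointwise statement $f(x_0,py_0)=pf(x_0,y_0)$ valid for every $(x_0,y_0)\in U\times\R^n$ and every $p>0$. For this I would note that an arbitrary pair $(x_0,y_0)$ is realized as $(x(t_0),\dot x(t_0))$ by an admissible curve — for instance the straight segment $x(t)=x_0+(t-t_0)y_0$, which lies in $U$ for $t$ near $t_0$ because $U$ is open. Evaluating the relation at $t=t_0$ then delivers the homogeneity identity at the arbitrary point $(x_0,y_0)$, completing the proof. The only genuine obstacle is this passage from a curve-wise to a fibre-wise statement; both the forward computation and the reparametrization identity are routine.
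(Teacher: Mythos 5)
Your proof is correct and follows essentially the same route as the paper, which presents the necessity direction in the discussion immediately preceding the theorem (deriving the localized identity $f\bigl(x,\frac{dx}{dt}\bigr)\frac{dt}{d\tau}=f\bigl(x,\frac{dx}{d\tau}\bigr)$ and then specializing to $t=p\tau$) and never writes out a formal proof. Your two additions — the explicit change-of-variables computation for sufficiency, and the straight-segment curve $x(t)=x_0+(t-t_0)y_0$ upgrading the curve-wise identity to the genuine fibre-wise statement $f(x_0,py_0)=pf(x_0,y_0)$ — correctly fill in precisely the steps the paper leaves implicit.
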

We list some properties of homogeneous functions.
\begin{enumerate}
\item Let $f(x,y)$ be a positively homogeneous function with respect to the variable $y$. The derivatives in $x$, $f_{x_i}(x,y)$ are also homogeneous in $y$.
\item Consider a homogenous function in $u$ of degree $r$: $g(pu)=p^rg(u)$; if we differentiate with respect to $p$ and set $p=1$, then 
$$\displaystyle \sum_i\dfrac{\partial g}{\partial u^i}u^i=rg(u).$$
Conversely $\displaystyle \sum_i\dfrac{\partial g}{\partial u^i}u^i=rg(u)$ implies that
$$\displaystyle \sum_i\dfrac{\partial g}{\partial u^i}(pu^i)=rg(pu)=\dfrac{\partial g(pu)}{\partial p}p.$$
Regarding $g(pu)$ as a function $h\left(p\right)$ of $p$, the previous relation shows that
$$\displaystyle\dfrac{d}{dp}\left(\dfrac{h\left(p\right)}{p^r}\right)=0\iff h\left(p\right)=p^rh(1).$$
This is the same as $g(pu)=p^rg(u)$.
\end{enumerate}
\begin{theo}[Euler]
A function of class $\mathscr{C}^1$ is homogeneous of degree $r$ if and only if
\begin{equation}
\displaystyle\displaystyle \sum_i\dfrac{\partial g}{\partial u^i}u^i=rg(u).\end{equation}
\end{theo}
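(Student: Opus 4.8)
The plan is to prove the two implications separately, in both cases reducing the multivariable statement to an ordinary one-variable computation in the scaling parameter $p > 0$. Indeed the computation sketched in item~2 of the preceding list of properties already contains the essential content, so I would simply organize it into a clean equivalence and supply the short differential-equation argument for the converse.

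For the direct implication, suppose $g$ is positively homogeneous of degree $r$, so that $g(pu) = p^r g(u)$ for all $p > 0$ and all $u$. Since $g$ is of class $\mathscr{C}^1$, I would differentiate both sides with respect to $p$. By the chain rule the left-hand side yields $\sum_i \frac{\partial g}{\partial u^i}(pu)\, u^i$, while the right-hand side yields $r p^{r-1} g(u)$. Evaluating at $p = 1$ gives the Euler identity $\sum_i \frac{\partial g}{\partial u^i}(u)\, u^i = r g(u)$ at once.

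For the converse, suppose the Euler identity holds at every point. Fixing $u$, I would set $h(p) := g(pu)$ for $p > 0$; this is a $\mathscr{C}^1$ function of the single variable $p$ whose derivative, by the chain rule, is $h'(p) = \sum_i \frac{\partial g}{\partial u^i}(pu)\, u^i$. Applying the Euler identity at the moving point $pu$ gives $\sum_i \frac{\partial g}{\partial u^i}(pu)\,(pu^i) = r g(pu)$, that is, $p\, h'(p) = r\, h(p)$. The key step is then to observe that this first-order linear relation forces $\frac{d}{dp}\bigl(h(p)/p^r\bigr) = \frac{p h'(p) - r h(p)}{p^{r+1}} = 0$, so that $h(p)/p^r$ is constant on $p > 0$. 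Since its value at $p = 1$ is $h(1) = g(u)$, I conclude $h(p) = p^r g(u)$, i.e. $g(pu) = p^r g(u)$, which is exactly positive homogeneity of degree $r$.

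The computation is entirely elementary, so there is no genuine obstacle; the only point requiring care is that in the converse the quotient $h(p)/p^r$ is well defined and differentiable precisely because we restrict to $p > 0$, and that the Euler identity must be used as a pointwise hypothesis, legitimately invoked at the moving argument $pu$ rather than only at the fixed point $u$.
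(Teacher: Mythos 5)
Your proof is correct and follows essentially the same route as the paper: the paper's item~2 in the list of properties of homogeneous functions carries out exactly this argument, differentiating $g(pu)=p^rg(u)$ with respect to $p$ and setting $p=1$ for the direct implication, and for the converse applying the Euler identity at the moving point $pu$, regarding $g(pu)$ as a function $h(p)$, and concluding from $\dfrac{d}{dp}\left(\dfrac{h(p)}{p^r}\right)=0$ that $h(p)=p^rh(1)$. Your write-up is in fact slightly more careful than the paper's, since you make explicit the restriction to $p>0$ and the pointwise use of the hypothesis, but the mathematical content is identical.
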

Now if we differentiate the Euler relation with respect to $u^j$ (for a fixed arbitrary j) we obtain
$$\displaystyle \sum_i\dfrac{\partial^2g}{\partial u^i\partial u^j}u^i+\dfrac{\partial g}{\partial u^j}=r\dfrac{\partial g}{\partial u^j}$$
or
$$\displaystyle \sum_i\dfrac{\partial^2g}{\partial u^i\partial u^j}u^i=(r-1)\dfrac{\partial g}{\partial u^j}.$$
Hence we have the
\begin{prop}
If a function $g$ of class $\mathscr{C}^2$ is positively homogeneous of degree $r$ in $u$, then $\dfrac{\partial g}{\partial u^j}$ is positively homogeneous of degree $r-1$.
\end{prop}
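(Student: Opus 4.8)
The plan is to reduce the claim to the Euler relation established just above the proposition. Write $h_j := \dfrac{\partial g}{\partial u^j}$. Since $g$ is of class $\mathscr{C}^2$, each $h_j$ is of class $\mathscr{C}^1$, so the Euler theorem (in its converse direction) applies to $h_j$; it therefore suffices to verify the identity $\sum_i \dfrac{\partial h_j}{\partial u^i} u^i = (r-1) h_j$.

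First I would observe that $\dfrac{\partial h_j}{\partial u^i} = \dfrac{\partial^2 g}{\partial u^i \partial u^j}$, so the identity to be checked reads $\sum_i \dfrac{\partial^2 g}{\partial u^i \partial u^j} u^i = (r-1)\dfrac{\partial g}{\partial u^j}$. But this is exactly the relation obtained by differentiating the Euler relation $\sum_i \dfrac{\partial g}{\partial u^i} u^i = r g(u)$ with respect to $u^j$, as displayed immediately before the statement. Hence by the Euler theorem, $h_j = \dfrac{\partial g}{\partial u^j}$ is positively homogeneous of degree $r-1$, which is the assertion.

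An alternative, perhaps more transparent, route is to differentiate the defining relation $g(pu) = p^r g(u)$ directly with respect to $u^j$. By the chain rule, and since $\partial (pu^i)/\partial u^j = p\,\delta_{ij}$, the left-hand side yields $p\,\dfrac{\partial g}{\partial u^j}(pu)$ while the right-hand side yields $p^r \dfrac{\partial g}{\partial u^j}(u)$; dividing by $p>0$ gives $\dfrac{\partial g}{\partial u^j}(pu) = p^{r-1}\dfrac{\partial g}{\partial u^j}(u)$, which is precisely positive homogeneity of degree $r-1$.

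There is no genuine obstacle here; the only point requiring care is the regularity bookkeeping. One uses the hypothesis $g \in \mathscr{C}^2$ so that the second partial derivatives exist and are continuous, making $h_j \in \mathscr{C}^1$ and legitimizing both the differentiation of the Euler relation and the application of the Euler theorem to $h_j$. One also uses $p>0$ throughout, so that division by $p$ and the positive-homogeneity hypothesis remain valid.
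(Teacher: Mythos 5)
Your first argument is exactly the paper's proof: differentiate the Euler relation $\sum_i \frac{\partial g}{\partial u^i}u^i = rg$ with respect to $u^j$ to get $\sum_i \frac{\partial^2 g}{\partial u^i\partial u^j}u^i = (r-1)\frac{\partial g}{\partial u^j}$, then invoke the converse direction of the Euler theorem stated just above, and your regularity bookkeeping ($g\in\mathscr{C}^2$ so $\frac{\partial g}{\partial u^j}\in\mathscr{C}^1$) is correct. Your alternative chain-rule computation on $g(pu)=p^r g(u)$ is also valid and even more direct, but the main route is essentially the same as the paper's.
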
 
\begin{prop}
If a positively homogeneous function $g(u)$ of degree $r$ is continuous at $u=0$, then
\begin{itemize}
\item $r>0\Longrightarrow g(0)=0$
\item $r=0\Longrightarrow g(u)=constant$.
\end{itemize} 
\end{prop}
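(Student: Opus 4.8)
The plan is to read the homogeneity identity $g(pu)=p^rg(u)$ as a statement about the one-parameter family $p\mapsto g(pu)$ along a fixed ray, and then to let $p\to 0^+$, at which point the continuity hypothesis at the origin fixes the limiting value. First I would fix an arbitrary nonzero $u=(u^1,\dots,u^n)$ and observe that as $p\to 0^+$ we have $pu\to 0$; since $g$ is assumed continuous at $0$, the left-hand side satisfies $g(pu)\to g(0)$. All the work is then in reading off what the right-hand side does in each of the two regimes.

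For the case $r>0$, the factor $p^r\to 0$ as $p\to 0^+$, so $p^rg(u)\to 0$; passing to the limit in $g(pu)=p^rg(u)$ forces $g(0)=0$. (I note in passing that for this case continuity is not even needed: substituting $u=0$ directly into the definition gives $g(0)=p^rg(0)$ for every $p>0$, and picking any $p\neq 1$, so that $p^r\neq 1$, yields $g(0)=0$ at once.) For the case $r=0$ the exponent kills the scaling, since $p^r=p^0=1$, and the identity reads simply $g(pu)=g(u)$, a quantity independent of $p$. Letting $p\to 0^+$ and using continuity gives $g(u)=g(0)$. As $u\neq 0$ was arbitrary and the equality $g(0)=g(0)$ is trivial, we conclude $g\equiv g(0)$, i.e. $g$ is constant.

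I do not expect a genuine obstacle in this argument; it is essentially a limiting argument riding on the defining relation. The one point deserving a moment of care is that the entire ray $\{pu:p>0\}$ must lie in the domain of $g$ so that the limit $p\to 0^+$ is legitimate and truly approaches the origin, which is exactly where the hypothesis of continuity at $u=0$ is brought to bear. This is automatic here because $g$ is defined on all of $\R^n$ (more generally it suffices that the domain be a cone having $0$ in its closure), so no further justification is required.
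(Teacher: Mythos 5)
Your proof is correct and follows essentially the same route as the paper: the paper's entire argument is the observation that $g(pu)=p^rg(u)$ together with $\lim_{p\to 0}p^rg(u)=g(0)$, which is exactly your limiting argument along rays, carried out with the two cases $r>0$ and $r=0$ made explicit. Your side remarks (that continuity is superfluous when $r>0$, and that the ray must lie in the domain) are accurate refinements but do not change the method.
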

\begin{proof}
We have $g(pu)=p^rg(u)$ and $\displaystyle\lim_{p\to 0}p^rg(u)=g(0)$.
\end{proof}
\section{Regularity}
We consider again the Euler-Lagrange equations with the hypothesis that $f(x,y)$ is of class $\mathscr{C}^2$ and homogeneous of degree $1$ in $y$. We set 
$$f_i:=\dfrac{\partial f}{\partial y^i},\quad f_{ij}=\dfrac{\partial f_i}{\partial y^j}.$$
We have $(f=f(x,\dot{x}))$
$$\displaystyle E_i\left(C\right)=f_{ij}\dfrac{dy^j}{dt}+\dfrac{\partial f_i}{\partial x^j}y^j-\dfrac{\partial f}{\partial x^i}=0.$$
Since $f_i$ is $0$-homogeneous in $y$, we have
$$\displaystyle\sum_jf_{ij}y^j=0.$$
Hence the rank of the matrix $(f_{ij})$ must be less than $n$ and we can not write the differential equations $E_i\left(C\right)$ in the so-called canonical form. We have
$$\displaystyle\sum_i\dfrac{\partial}{\partial y^i}\left(\dfrac{\partial f}{\partial x^j}\right)y^i=\dfrac{\partial f}{\partial x^j}$$
or
$$\displaystyle\sum_i\dfrac{\partial f_i}{\partial x^j}y^i=\dfrac{\partial f}{\partial x^j}.$$
Therefore using the just given relations we have
$$\displaystyle\sum_i E_i\left(C\right)y^i=0.$$
This identity means that the $n$-equations $E_i\left(C\right)$ are not independent. 

\noindent We consider the function $F(x,y)=\dfrac{1}{2}f(x,y)^2$, which is $2$-homogeneous in $y$ so that
$$E_i\left(C\right)=\dfrac{d\left(\dfrac{F_i}{f}\right)}{dt}-\dfrac{\dfrac{\partial F}{\partial x^i}}{f}$$
where $F_i:=\dfrac{\partial F}{\partial y^i}$. We take the normalized parameter $\tau$ of $C$ defined by
$$\displaystyle\tau=\int_a^tf(x(s),\dot{x}(s))ds$$
which gives 
$$\dfrac{d\tau}{dt}=f(x(t),\dot{x}(t));\,\,f(x,\dfrac{dx}{dt}\dfrac{dt}{d\tau})=1.$$ 
Hence the Euler equations may be rewritten
$$\displaystyle E_i\left(C\right)=\dfrac{d}{d\tau}\left(\dfrac{F_i}{f}\right)\dfrac{d\tau}{dt}-\dfrac{\dfrac{\partial F}{\partial x^i}}{f}=\dfrac{dF_i}{d\tau}-\dfrac{\partial F}{\partial x^i}=0.$$
We put
$$F_{ij}(x,y)=\dfrac{\partial^2F}{\partial y^i\partial y^j}$$
and we obtain with $y^j=\dfrac{dx^j}{d\tau}$
$$E_i\left(C\right)=F_{ij}(x,y)\left(\dfrac{dy^j}{d\tau}\right)+\bigg\{\left(\dfrac{\partial F_i}{\partial x^j}\right)y^j-\dfrac{\partial F}{\partial x^i}\bigg\}.$$
Consequently if $\det(F_{ij})\not=­0$, the $E_i\left(C\right)$ are $n$ independent equations and can be rewritten in normal form. If $\det(F_{ij})­\not=0$ then the variational problem is called regular.

\begin{ex}We consider now the Weirstrass invariant in dimension $2$. We set
$$f(x,y,\dot{x},\dot{y})=f(x,y,p,q),\quad p=\dot{x},\\ q=\dot{y}.$$
The Euler-Lagrange equations are
$$\displaystyle\begin{cases}
     f_x-\dfrac{d}{dt}f_p &=0 \\\\
     f_y-\dfrac{d}{dt}f_q & =0.
\end{cases}$$
These equations are not in general independent. Now $f$ is $1$-homogeneous in $p$, $q$ so the derivatives $f_p$ and $f_q$ are $0$-homogeneous in $p$, $q$. By Euler relations
$$\displaystyle\begin{cases}
     pf_{pp}+qf_{pq} &=0 \\\\ 

     pf_{qp}+qf_{qq} & =0
     \end{cases}\qquad\Longrightarrow\dfrac{f_{pp}}{q^2}=-\dfrac{f_{qq}}{p^2}=-\dfrac{f_{pq}}{pq}=:W.$$
     $W$ is called the Weirstrass invariant. Since $f_x$, $f_y$ are still $1$-homogeneous in $p$, $q$. We have (Euler relations)
$$\displaystyle\begin{cases}
     pf_{xp}+qf_{xq} &=f_x \\\\ 

     pf_{yp}+qf_{yq} & =f_y.\end{cases}$$\end{ex}
\begin{prop}
The extremal equations reduce to a single equation
\begin{equation}f_{xq}-f_{yq}+(p\dot{q}-\dot{p}q)W\end{equation}
\end{prop}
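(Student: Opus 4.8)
The plan is to exploit the redundancy of the two Euler--Lagrange equations. Write $E_1 = f_x - \frac{d}{dt}f_p$ and $E_2 = f_y - \frac{d}{dt}f_q$. The identity $\sum_i E_i(C)\,y^i = 0$ established above reads here $p\,E_1 + q\,E_2 = 0$, so the vector $(E_1,E_2)$ is always proportional to $(-q,p)$; consequently, as long as $(p,q)\neq(0,0)$ (which holds for an admissible parametrized curve), the system $E_1=E_2=0$ is equivalent to the single scalar equation $q\,E_1 - p\,E_2 = 0$. The whole argument then reduces to computing this one combination and showing it equals $(p^2+q^2)$ times the announced expression.

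Next I would expand the total derivatives by the chain rule, using $\dot x = p$ and $\dot y = q$:
$$\frac{d}{dt}f_p = p\,f_{px} + q\,f_{py} + \dot p\,f_{pp} + \dot q\,f_{pq},\qquad \frac{d}{dt}f_q = p\,f_{qx} + q\,f_{qy} + \dot p\,f_{qp} + \dot q\,f_{qq}.$$
Forming $q\,E_1 - p\,E_2$ then produces three groups of terms: the undifferentiated part $q f_x - p f_y$; a part linear in the mixed space--velocity derivatives $f_{xp},f_{xq},f_{yp},f_{yq}$; and a part linear in $\dot p,\dot q$ carrying the pure velocity Hessian $f_{pp},f_{pq},f_{qq}$. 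Into the last group I substitute the Weierstrass relations $f_{pp}=q^2 W$, $f_{pq}=-pq\,W$, $f_{qq}=p^2 W$; a short computation collapses it to $W(p^2+q^2)(p\dot q - q\dot p)$, which already exhibits the claimed $(p\dot q - \dot p q)W$ factor.

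It remains to treat the first two groups together. Here I would use that $f_x$ and $f_y$ are positively homogeneous of degree $1$ in $(p,q)$ (property $1$ of homogeneous functions listed above), so Euler's relation gives $f_x = p\,f_{xp} + q\,f_{xq}$ and $f_y = p\,f_{yp} + q\,f_{yq}$. Substituting these into $q f_x - p f_y$ and adding the mixed-derivative group, the coefficients of $f_{xp}$ and of $f_{yq}$ cancel and what survives is exactly $(p^2+q^2)(f_{xq} - f_{yp})$. Combining with the previous step,
$$q\,E_1 - p\,E_2 = (p^2+q^2)\big[(f_{xq}-f_{yp}) + (p\dot q - \dot p q)W\big],$$
and dividing by $p^2+q^2 \neq 0$ yields the single extremal equation $f_{xq} - f_{yp} + (p\dot q - \dot p q)W = 0$. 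The only real obstacle is the bookkeeping in this last step: one must keep the symmetry $f_{xp}=f_{px}$ straight and apply the two different incarnations of Euler's relation in the right places (degree $0$ for $f_p,f_q$, which produces the Weierstrass substitutions, versus degree $1$ for $f_x,f_y$, used here), after which the cancellation is automatic.
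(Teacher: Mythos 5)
Your proof is correct, and it supplies an argument the paper itself omits: the proposition is stated there without proof, as the classical Weierstrass reduction. Your route is to form the antisymmetric combination $qE_1-pE_2$ and, using the identity $pE_1+qE_2=0$ together with $p^2+q^2\neq 0$, observe that the $2\times 2$ system $qE_1-pE_2=0$, $pE_1+qE_2=0$ has determinant $p^2+q^2$, forcing $E_1=E_2=0$; your bookkeeping then correctly yields $qE_1-pE_2=(p^2+q^2)\left[(f_{xq}-f_{yp})+(p\dot q-\dot p q)W\right]$. The more traditional derivation (the one implicit in the paper's setup) expands a single Euler--Lagrange equation and factors out one velocity component: using $f_x=pf_{xp}+qf_{xq}$ and the Weierstrass substitutions one finds directly
\begin{equation*}
E_1=q\left[(f_{xq}-f_{yp})+(p\dot q-\dot p q)W\right],\qquad E_2=-p\left[(f_{xq}-f_{yp})+(p\dot q-\dot p q)W\right],
\end{equation*}
which exhibits the proportionality of $(E_1,E_2)$ to $(q,-p)$ explicitly and makes the redundancy identity a corollary rather than an input. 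Your combination buys manifest symmetry in $p$ and $q$ at the cost of one extra division; the factored form is slightly shorter and works at any point where at least one of $p,q$ is nonzero without invoking the determinant argument. Two small points worth flagging: your final equation reads $f_{xq}-f_{yp}$, which is the correct classical form and shows that the $f_{yq}$ in the paper's statement (which also lacks the ``$=0$'') is a typo; likewise your substitutions $f_{pp}=q^2W$, $f_{pq}=-pqW$, $f_{qq}=p^2W$ are the ones actually forced by the Euler relations $pf_{pp}+qf_{pq}=0$ and $pf_{qp}+qf_{qq}=0$, so the sign in the paper's display $\frac{f_{pp}}{q^2}=-\frac{f_{qq}}{p^2}$ is another typo (it should be $\frac{f_{pp}}{q^2}=\frac{f_{qq}}{p^2}=-\frac{f_{pq}}{pq}=W$), which your computation silently corrects.
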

This is Weirstrass form of the Euler-Lagrange equations. We can observe that $W(x,y,p,q)$ is (-$3)$-homogeneous in $p$, $q$. Because $W=\dfrac{f_{pp}}{q^2}$ and $f_{pp}$ is (-$1)$-homogeneous.     
\section{Finsler Metric}
Let $\mathbf{M}^n$ be a smooth $n$-dimensional manifold and $T_x\mathbf{M}^n$ its tangent space at $x\in \mathbf{M}^n$. Let $\pi:T\mathbf{M}\to \mathbf{M}^n$ be its tangent bundle: $\pi(y)=x$ for $y\in T_x\mathbf{M}$. Consider a local coordinate system $(U,(x^i))$ on $U\subset \mathbf{M}^n$. A tangent vector $y$ at $x=(x^i)$ is written as $\displaystyle\sum_{i=1}^ny^i\dfrac{\partial}{\partial x^i}$. Thus a coordinate system on $T\,\mathbf{M}$ is $\{\pi^{-1}(U),(x^i,y^i)\}$ and $T\,\mathbf{M}$ is a $2n$-dimensional manifold. Consider two non-emptily intersecting coordinate systems on $\mathbf{M}^n$ $(U,(x^i))$ and $(\overline{U},(\overline{x}^r))$: $U\cap\overline{U}­\not=\emptyset$, $\overline{x}^r=\overline{x}^r(x^i)$.

Define $\overline{X}^r_i:=\dfrac{\partial\overline{x}^r}{\partial x^i}$ and $\underline{X}_r^i=\dfrac{\partial x^i}{\partial \overline{x}^r}$. 
So between $(U,(x^i))$ and $(\overline{U},(\overline{x}^r))$ we have
$$\displaystyle\sum_r\overline{y}_r\dfrac{\partial}{\partial\overline{x}^r}=\sum_r\overline{y}_r\left(\sum_i\dfrac{\partial x^i}{\partial \overline{x}^r}\cdot\dfrac{\partial }{\partial x^i}\right)=\sum_iy^i\dfrac{\partial}{\partial x^i}$$
with $\displaystyle y^i=\sum_r\overline{y}^r\dfrac{\partial x^i}{\partial \overline{x}^r}$.
\begin{defi}
A tensor field of type $(1,2)$ on $\mathbf{M}^n$ is a collection of $n^3$ smooth functions $T_{jk}^i(x)$ (the components of T) in each chart $U$ such that if $U\cap\overline{U}\not=­\emptyset$
$$\overline{T}^r_{st}=T^i_{jk}\overline{X}^r_i\underline{X}_s^j\underline{X}_t^k$$
in Einstein's notation.
\end{defi}
We extend this definition to the case of the tangent bundle. We consider smooth functions $T_{jk}^i(x,y)$ in each coordinate neighborhood $\pi^{-1}(U)$. We then obtain a vectorial tensor field (spray tensor field). Differentiating the relation of definition of the tensor field with respect to $\overline{y}^q$ we obtain
$$\dfrac{\partial}{\partial\overline{y}^q}\overline{T}_{st}^r=\dfrac{\partial}{\partial y^h}T^i_{jk}\overline{X}_i^r\underline{X}_s^j\underline{X}_t^k\left(\dfrac{\partial y^h}{\partial\overline{y}^q}\right)=\left(\dfrac{\partial}{\partial y^h}T^i_{jk}\right)\overline{X}_i^r\underline{X}_s^j\underline{X}_t^k\underline{X}_q^h.$$
This shows that the partial derivative with respect to $y^i$ of the components of a vectorial tensor field of type $(1,2)$ gives a vectorial tensor field of type $(1,3)$.
\begin{rem}
One may define analogously vectorial tensors of type $(r,s)$. The partial derivative with respect to $y^i$ of a vectorial tensor of type $(r,s)$ is a vectorial tensor of type $(r,s+1)$.
\end{rem}
Assume that a vectorial scalar field $L(x,y)$ is given in $\mathbf{M}^n$. We can then define the length of a curve
$$\displaystyle x^i:t\in I=\left[a,b\right]\mapsto x^i(t).$$
It is given by
$$\displaystyle s=\int_a^bL(x(t),\dot{x}(t))dt.$$
\begin{defi}
\label{g}
A smooth $n$-dimensional manifold $\mathbf{M}^n$ equipped with such a notion of length is called a Finsler space with metric $L$ if
\begin{enumerate}
  \item The length does not depend on the parametrization $L(x,y)$ and $L$ is $1$-homogenous in $y$.
  \item The length integral gives rise to a regular variational problem, that is the Finslerian spray field $(g_{ij})$ defined by
  $$\displaystyle g_{ij}:=\dot{\partial}_i\dot{\partial}_jF=\dfrac{\partial^2F}{\partial y^i\partial y^j}$$ 
  has a non-vanishing determinant.
  Here $F=\dfrac{1}{2}L^2$, $\partial_i=\dfrac{\partial}{\partial x^i}$, $\dot{\partial}_i=\dfrac{\partial}{\partial y^i}$.
We denote by $(g^{ij}):=(g_{ij})^{-1}$. We observe that $(g_{ij})$ and $(g^{ij})$ are $0$-homogeneous as $F$ is $2$-homogeneous. We define
$$\displaystyle y_i:=g_{ij}y^j=\sum_{j}g_{ij}(x,y)y^j=\dot{\partial}_iF=L(\dot{\partial}_iL)$$
because $\dot{\partial}_iF$ is $1$-homogeneous and of the Euler's theorem. We moreover have
$$\displaystyle\sum_{i,j}g_{ij}(x,y)y^iy^j=2F(x,y)=L^2(x,y)$$
due to the $2$-homogeneity of $F$.
If $g_{ij}(x,y)$ is continuous at $y=0$, then $g_{ij}(x,y)$ is constant in $y$, i.e. it depends on $x$ alone and the previous equation gives
$$\sum_{i,j}g_{ij}(x)y^iy^j=2F(x,y)=L^2(x,y)$$   
that is a Riemannian metric.
\item In each tangent space $\mathbf{M}_x:=T_x\mathbf{M}$ we need a region $\mathbf{M}_x^\star$ such that $L(x,y)$ is differentiable in $y^i$ on $\mathbf{M}_x^\star$, where $\mathbf{M}_x^\star$ does not contain $y=0$ and is a positively conical region ($y\in \mathbf{M}_x^\star$ and $\lambda>0\Longrightarrow \lambda y\in \mathbf{M}_x^\star$).
In this situation we consider $\displaystyle \mathbf{M}^\star=\bigcup_{x\in \mathbf{M}}\mathbf{M}_x^\star$ as the union of the domains of definition of $L$. 

We normalize by setting $l^i=\dfrac{y^i}{L(x,y)}$, then $\displaystyle\sum g_{ij}l^il^j=1$ ($l=(l^i)$ is a unit vector with respect to the fundamental tensor $g=(g_{ij})$) and
$$L(x,l)=L\left(x,\dfrac{y}{L(x,y)}\right)=\dfrac{L(x,y)}{L(x,y)}=1.$$
\item The fundamental function $L(x,y)$ is positive for each $x$ and for each $y\in \mathbf{M}_x^\star$.
\end{enumerate}
\end{defi}
\section{Indicatrix}
The extremals of the integral $\displaystyle s=\int_a^bL(x(t),\dot{x}(t))dt$ are called the geodesics of the Finsler space. They are given by
\begin{equation}\label{30003}\displaystyle\dfrac{d^2x^i}{ds^2}+2G^i\left(x,\dfrac{dx}{ds}\right)=0\end{equation}
where $s$ is the normalized parameter (arc length) with
$$\displaystyle G^i:=\sum g^{ij}G_j$$
and 
$$\displaystyle 2G_j=\sum_i(\dot{\partial}_j\partial_iF)y^i-\partial_jF.$$
We can define the Christoffel symbols
\begin{equation}\label{y}\gamma_{ijk}(x,y)=\sum g_{ih}\gamma_{jk}^h=\dfrac{1}{2}\left(\partial_kg_{ij}+\partial_jg_{ik}-\partial_ig_{jk}\right).\end{equation}
\begin{theo}
\label{z}
$$2G^i(x,y)=\gamma_{jk}^iy^ky^j.$$
\end{theo}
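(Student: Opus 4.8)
The plan is to prove the equivalent ``lowered'' identity $2G_i=\gamma_{ijk}y^jy^k$ and then raise the index. Since $G^i=\sum_h g^{ih}G_h$ and, by the defining relation $\gamma_{ijk}=\sum_h g_{ih}\gamma^h_{jk}$ in \eqref{y}, one has $\gamma^i_{jk}=\sum_h g^{ih}\gamma_{hjk}$, contracting the claimed formula with the metric shows that $2G^i=\gamma^i_{jk}y^jy^k$ holds if and only if $2G_i=\gamma_{ijk}y^jy^k$. I would reduce at once to this first-kind formulation, which avoids carrying $(g^{ij})$ through the computation; the only role of regularity (non-degeneracy of $(g_{ij})$) is to guarantee the inverse used in this final raising.

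Next I would expand the right-hand side using $\gamma_{ijk}=\tfrac12(\partial_k g_{ij}+\partial_j g_{ik}-\partial_i g_{jk})$. Contracting with $y^jy^k$ and relabelling the dummy indices $j\leftrightarrow k$ in the second summand shows that the first two terms coincide, so that $\gamma_{ijk}y^jy^k=(\partial_k g_{ij})y^jy^k-\tfrac12(\partial_i g_{jk})y^jy^k$. The whole proof then comes down to evaluating these two contractions.

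The key step is to feed in the two homogeneity identities recorded in Definition~\ref{g}, namely $\sum_{j,k}g_{jk}y^jy^k=2F$ and $\sum_j g_{ij}y^j=\dot{\partial}_iF$, and to differentiate them with respect to the base variables. Because $x$ and $y$ are independent coordinates on $T\mathbf{M}$, the operators $\partial_i$ and $\dot{\partial}_j$ commute and each $y^j$ is annihilated by $\partial_i$. Differentiating $g_{jk}y^jy^k=2F$ in $x^i$ gives $(\partial_i g_{jk})y^jy^k=2\partial_iF$, so the second contraction contributes $-\partial_iF$. Differentiating $g_{ij}y^j=\dot{\partial}_iF$ in $x^k$ and commuting partials gives $(\partial_k g_{ij})y^j=\dot{\partial}_i\partial_kF$, so the first contraction equals $\sum_k(\dot{\partial}_i\partial_kF)y^k$. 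Adding the two pieces reproduces exactly $\sum_k(\dot{\partial}_i\partial_kF)y^k-\partial_iF$, which is $2G_i$ by the defining relation $2G_i=\sum_k(\dot{\partial}_i\partial_kF)y^k-\partial_iF$, and this closes the argument.

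The only real obstacle is the bookkeeping: one must pick the correct homogeneity relation to differentiate for each of the two surviving contractions and keep the commutation of $\partial_i$ with $\dot{\partial}_j$ straight, since interchanging their roles would spoil the match with the definition of $G_i$. The symmetrisation that merges the first two Christoffel terms and the applications of Euler's theorem are otherwise routine, and, apart from the inverse $(g^{ij})$ needed to raise the index at the end, no further non-degeneracy hypothesis is used.
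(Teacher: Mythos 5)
Your proof is correct and complete. The paper states Theorem~\ref{z} without giving any proof at all, so there is no argument to compare yours against; what you wrote is the standard derivation and fills the gap cleanly: reduce to the lowered identity $2G_i=\gamma_{ijk}y^jy^k$ (with non-degeneracy of $(g_{ij})$ entering only in the final raising of the index), symmetrise the dummy indices so that $\gamma_{ijk}y^jy^k=(\partial_k g_{ij})y^jy^k-\tfrac{1}{2}(\partial_i g_{jk})y^jy^k$, then differentiate the homogeneity identities $g_{jk}y^jy^k=2F$ and $g_{ij}y^j=\dot{\partial}_iF$ of Definition~\ref{g} in $x$, using that $\partial_i$ annihilates $y^j$ and that $\partial_k$ and $\dot{\partial}_i$ commute for $F$ of class $\mathscr{C}^2$, which lands exactly on the paper's definition $2G_i=\sum_k(\dot{\partial}_i\partial_kF)y^k-\partial_iF$.
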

\subsection{Formulas in Riemannian Geometry}
We recall that the Christoffel symbols are given by:
$$\Gamma_{kij}=\dfrac{1}{2}\left(\dfrac{\partial}{\partial x_j}g_{ki}+\dfrac{\partial}{\partial x_i}g_{kj}-\dfrac{\partial}{\partial x_k}g_{ij}\right)=\dfrac{1}{2}\left(g_{ki,j}+g_{kj,i}-g_{ij,k}\right).$$
\begin{equation}\begin{split}\Gamma^m_{ij}&=g^{mk}\Gamma_{kij}=\dfrac{1}{2}g^{mk}\left(\dfrac{\partial}{\partial x_j}g_{ki}+\dfrac{\partial}{\partial x_i}g_{kj}-\dfrac{\partial}{\partial x_k}g_{ij}\right)\\
&=\dfrac{1}{2}g^{mk}\left(g_{ki,j}+g_{kj,i}-g_{ij,k}\right).
\end{split}
\end{equation}
Here $g^{ij}$ is the inverse matrix of the metric tensor $g_{ij}$:
$$\delta_j^i=g^{ik}g_{kj}$$
and thus if $n=\dim \mathbf{M}$
$$n=\delta^i_i=g_i^i=g^{ij}g_{ji}.$$
Everything comes from the relation
\begin{equation}
\label{3000}
Z(x,ky)=k^rZ(x,y)\Longrightarrow y^i\dfrac{\partial Z(x,y)}{\partial y^i}=rZ(x,y)
\end{equation}
The application of \eqref{3000} to $L^2$ yields
\begin{equation}
\label{3001}
L^2(x,y)=g_{ij}y^iy^j
\end{equation}
where
\begin{equation}
\label{3002}
g_{ij}(x,y)=\dfrac{1}{2}\dfrac{\partial^2L^2(x,y)}{\partial y^i\partial y^j}
\end{equation}
is called the Finsler metric tensor.
Introducing the covariant tangent vector
\begin{equation}
\label{3003}
y_i=g_{ij}(x,y)y^j
\end{equation}
we obtain from \eqref{3003} and the Euler  \theoref{3000} with $r=1$ that 
\begin{equation}
\label{3004}
y_i=L(x,y)\dfrac{\partial L(x,y)}{\partial y^i}.
\end{equation} 
To show this we remark since $\dfrac{\partial}{\partial y^i}L^2$ is $1$-homogeneous that
\begin{equation}
\label{3005}
\begin{split}
\displaystyle&\dfrac{1}{2}\sum_jy^j\dfrac{\partial}{\partial y^j}\left(\dfrac{\partial}{\partial y^i}L^2\right)=\dfrac{1}{2}\dfrac{\partial}{\partial y^i}L^2=L\dot{L}\\
&=\sum g_{ij}y^j=y_i.
\end{split}
\end{equation}
Consider the tangent vector
\begin{equation}
\label{3006}
\begin{split}
l_i&=\dfrac{y_i}{L(x,y)}\\
&=\dfrac{\partial L(x,y)}{\partial y^i}.
\end{split}
\end{equation}
The definitions \eqref{3002} and \eqref{3003} imply that
\begin{equation}
\label{3007}
g_{ij}=\dfrac{\partial y_i}{\partial y^j}
\end{equation}
whereas
\begin{equation}
\label{3008}
\dfrac{\partial l_i}{\partial y^j}=\dfrac{h_{ij}(x,y)}{L(x,y)}
\end{equation}
where
\begin{equation}
\label{3009}
h_{ij}(x,y)=g_{ij}(x,y)-l_i(x,y)l_j(x,y).
\end{equation}
In fact:
$$\dfrac{\partial}{\partial y^j}l_i=\dfrac{\partial}{\partial y^j}\dfrac{y_i}{L}=\dfrac{g_{ij}}{L}-\dfrac{l_il_j}{L}.$$ 
\begin{defi}
$(h_{ij})$ is called the angular metric tensor.
\end{defi}
We have
\begin{equation}
\label{3010}
y^ih_{ij}=0\quad g^{ij}h_{ij}=n-1.
\end{equation}
Indeed
\begin{equation}
\begin{split}
y^ih_{ij}&=y^ig_{ij}-y^il_il_j=y_j-l_jy^il_i=y_j-l_jy^i\dfrac{\partial L}{\partial y^i}\\
&=y_j-l_jL\\
&=y_j-y_j\\
&=0
\end{split}
\end{equation}
and
$$g^{ij}h_{ij}=g^{ij}g_{ij}-g^{ij}l_il_j=n-1.$$
As we observed a Finsler geometry reduces to a Riemannian geometry in the case when the metric tensor $g_{ij}(x,y)$ is assumed to be independent of $y^i$. This reads 
\begin{equation}
\label{3011}
C_{ijk}:=\dfrac{1}{2}\dfrac{\partial g_{ij(x,y)}}{\partial y^k}=0.
\end{equation}
This entity is a tensor because differentiation of a tensor with respect to a vector argument gives again a tensor, in contrast to differentiation with respect to the coordinates $x_i$. We have from \eqref{3002} and \eqref{3011}
\begin{equation}
\label{3012}
C_{ijk}=\dfrac{1}{4}\dfrac{\partial^3 L^2}{\partial y^i\partial y^j\partial y^k}
\end{equation}
which says that this tensor is symmetric in all its subscripts.
\begin{defi}
$C_{ijk}(x,y)$ is called the Cartan torsion tensor.
\end{defi}
The Euler theorem indicates that ($r=0$)
\begin{equation}
\label{3013}
y^iC_{ijk}(x,y)=0.
\end{equation}
Similarly the tensor 
\begin{equation}
\label{3014}
C_{ijkm}(x,y)=\dfrac{\partial C_{ijk(x,y)}}{\partial y^m}
\end{equation}
is also symmetric in all its subscripts and satisfies
\begin{equation}
\label{3015}
y^iC_{ijkm}(x,y)=-C_{jkm}.
\end{equation}
The contravariant reciprocal tensor $g^{ij}(x,y)$ of the covariant metric tensor \eqref{3002} is such that
\begin{equation}
\label{3016}
g^{ki}(x,y)g_{ij}(x,y)=\delta_j^k.
\end{equation}
The differentiation with respect to $y^n$ yields:
\begin{equation}
\label{3017}
g^{ki}\dfrac{\partial g_{ij}}{\partial y^n}+g_{ij}\dfrac{\partial g^{ki}}{\partial y^n}=0.
\end{equation}
But \eqref{3011} shows that
\begin{equation}
\label{3018}
\dfrac{1}{2}\dfrac{\partial g^{ki}(x,y)}{\partial y^n}=-C_n^{ki}
\end{equation}
with 
$$C^{ni}_m=C_{mkj}g^{nk}g^{ij},\quad C^i_{jkm}=C_{njkm}g^{in},\quad l^i=g^{ij}l_j$$
To see \eqref{3018}
\begin{equation}
\begin{split}
C_n^{ki}=C_{nrs}g^{kr}g^{is}&=\dfrac{1}{2}g^{kr}g^{is}\dfrac{\partial g_{rs}}{\partial y^n}=-\dfrac{1}{2}g^{is}g_{rs}\dfrac{\partial g^{kr}}{\partial y^n}\\
&=-\dfrac{1}{2}\delta_r^i\dfrac{\partial g^{kr}}{\partial y^n}=-\dfrac{1}{2}\dfrac{\partial g^{ki}}{\partial y^n}.
\end{split}
\end{equation}
The geodesics of a Finsler space are the stationary curves of the functional
\begin{equation}
\label{3019}
\displaystyle I\left(C\right)=\int_{p_1}^{p_2}F(x,dx)=\int_a^bF(x,\dot{y})dt.
\end{equation}
The natural parameter $\tau$ of the (Finslerian) arc-length is defined by
\begin{equation}
\label{3020}
d\tau=L(x,dx)
\end{equation}
and the homogeneity of $F(x,y)$ shows that (from \theoref{carath})
$$\displaystyle\int L(x,dx)=\int L(x,\dfrac{dx}{d\tau})d\tau=\int L(x,\dfrac{dx}{dt})dt$$
for any parameter $t=t(\tau)$ with $\dfrac{dt}{d\tau}­\not=0$.

\noindent The Euler-Lagrange equations take the form
\begin{equation}
\label{3021}
\dfrac{d}{dt}\left(\dfrac{\partial F(x,\dot{x})}{\partial \dot{x}^i}\right)-\dfrac{\partial F(x,\dot{x})}{\partial x^i}=0
\end{equation}
and Finslerian geodesics satisfy the Euler-Lagrange equations associated with the Lagrangian $L(x,\dot{x})$.

\noindent The equations of the geodesics can be written as
\begin{equation}
\label{3022}
\dfrac{d^2x^i}{dt^2}+\gamma_{mn}^i\dot{x}^m\dot{x}^n-\dot{x}^i\dfrac{d(\ln L(x,\dot{x}))}{dt}=0
\end{equation}
where
$$\gamma_{mn}^i(x,y)=\dfrac{1}{2}g^{ik}(x,y)\left(\dfrac{\partial g_{mk}(x,y)}{\partial x^n}+\dfrac{\partial g_{nk}(x,y)}{\partial x^m}-\dfrac{\partial g_{mn}(x,y)}{\partial x^k}\right)$$
are the Finslerian Christoffel symbols.  We are going to confirm this: we use \eqref{3001}: $L^2=g_{ij}(x,y)y^iy^j,\,(y^i=\dot{x}^i)$ in the Euler-Lagrange equation \eqref{3021} to get
$$2L\dfrac{\partial L}{\partial \dot{x}^i}=2g_{ij}y^j\iff\dfrac{\partial L}{\partial \dot{x}^i}=\dfrac{g_{ij}(x,y)y^j}{L}.$$
In the same way
$$2L\dfrac{\partial L}{\partial x^i}=y^my^n\dfrac{\partial g_{mn}}{\partial x^i}$$
so that the Euler-Lagrange equations become:
$$\dfrac{d}{dt}\left(\dfrac{g_{ij}(x,\dot{x})\dot{x}^j}{L(x,\dot{x})}\right)-\dfrac{1}{2}L^{-1}(x,\dot{x})\dot{x}^m\dot{x}^n\dfrac{\partial g_{mn}(x,\dot{x})}{\partial x^i}=0.$$
Multiplying by $L$ we arrive at
$$\displaystyle g_{ij}\{\ddot{x}^j-\dot{x}^j\dfrac{d}{dt}\log L\}+\dot{x}^j\dfrac{d}{dt}g_{ij}(x,\dot{x})-\dfrac{1}{2}\dot{x}^m\dot{x}^n\dfrac{\partial g_{mn}}{\partial x^i}(x,\dot{x})=0.$$
This gives using \eqref{3011}
\begin{equation}
\label{3023}
\displaystyle g_{ij}\{\ddot{x}^j-\dot{x}^j\dfrac{d}{dt}\log L\}+\dot{x}^j\dot{x}^n\dfrac{\partial g_{ij}(x,\dot{x})}{\partial x^n}g_{ij}-\dfrac{1}{2}\dot{x}^m\dot{x}^n\dfrac{\partial g_{mn}}{\partial x^i}(x,\dot{x})=0.\end{equation}
Raising the index $i$ in \eqref{3023} and noting that
$$\dot{x}^m\dot{x}^n\gamma_{min}(x,\dot{x})=\dot{x}^m\dot{x}^n\left(\dfrac{\partial g_{im}(x,\dot{x})}{\partial x^n}-\dfrac{1}{2}\dfrac{\partial g_{mn}(x,\dot{x})}{\partial x^i}\right)$$
we obtain the desired relation
$$\dfrac{d^2x^i}{dt^2}+\gamma_{mn}^i\dot{x}^m\dot{x}^n-\dot{x}^i\dfrac{d(\log L(x,\dot{x}))}{dt}=0.$$
In the case where the parameter $t$ is chosen to be the Finslerian arc-length $\tau$ defined by \eqref{3020} we find that if
\begin{equation}
\label{3024}
x^{\prime i}=\dfrac{dx^i}{d\tau}
\end{equation}
then $L(x,x^\prime)=1$ and the equation of the geodesics reduces to
\begin{equation}
\label{3025}
\dfrac{d^2x^i}{d\tau^2}+\gamma_{mn}^i(x,x^\prime)x^{\prime m}x^{\prime n}=0.
\end{equation}
\section{Review of Tensor calculus}
We shall consider some facts on tensors. Let $\mathbf{E}$ be a finite dimensional real vector of dimension $n$. For us $\mathbf{E}$ will typically be the tangent space at $x\in \mathbf{M}$ to a differentiable manifold $\mathbf{M}$: $\mathbf{E}=T_x\mathbf{M}$. We denote a basis of $E$ by $\partial=(\partial_1,\cdots,\partial_n)$, with its dual basis given by $\sigma=(dx^1,\cdots,dx^n)$.
\begin{defi}
A covariant tensor of rank $r$ is an $r$-multilinear real-valued function
$$Q:\mathbf{E}\times \cdots \times \mathbf{E}\to \R.$$
\end{defi}  
We emphasize that the values of this function must be independent of the basis in which the components are expressed.

\noindent A covariant vector is covariant tensor of rank $1$. The most well-known second rank tensor is the metric tensor $g=(g_{ij})$
$$\displaystyle g(v,w)=\langle u,w\rangle=\sum_{i,j}g_{ij}v^iw^j.$$
In components we have
\begin{equation}
\label{3026}
\begin{split}
\displaystyle Q(v_1,\cdots,v_r)&=Q(\sum_{i_1}v_1^{i_1}\partial_{i_1},\cdots,\sum_{i_r}v_r^{i_r}\partial_{i_r})\\
&=\sum_{i_1\cdots i_r}v_1^{i_1}\cdots v_{r}^{i^r}Q(\partial_{i_1},\cdots,\partial{i_r})
\end{split}
\end{equation}
that is
$$\displaystyle Q(v_1,\cdots,v_r)=\sum_{i_1,\cdots,i_r}Q_{i_1\cdots i_r}v_1^{i_1}\cdots v_r^{i_r},\, Q_{i_1\cdots i_r}=Q(\partial_{i_1},\cdots,\partial_{i_r}).$$ 
The collection of all covariant tensors of rank $r$ forms a vector space of dimension $n^r$. It is denoted by 
$$\displaystyle \mathbf{E}^\star\otimes \mathbf{E}^\star\otimes\cdots\otimes \mathbf{E}^\star=\bigotimes^r\mathbf{E}^\star.$$
If $\alpha,\,\beta$ are two covectors, that is elements of $\mathbf{E}^\star$ we can form a second-rank covariant tensor, the tensor product of $\alpha$ and $\beta$, by
\begin{equation}
\begin{split}
\alpha\otimes\beta&:\mathbf{E}\otimes\mathbf{E}\to \R\\
&\alpha\otimes\beta(v,w):=\alpha(v)\beta(w)
\end{split}
\end{equation}
A contravariant vector that is an element of $\mathbf E$, can be considered as linear functional on covectors by defining
$$v(\alpha):=\alpha(v).$$
\begin{defi}
A contravariant tensor of rank $s$ is a multilinear real-valued function on $s$-tuples of covectors
$$T:\mathbf{E}^\star\times\mathbf{E}^\star\times\cdots\times\mathbf{E}^\star\to\R$$ 
so that 
$$T(\alpha_1,\cdots,\alpha_s)=a_{1i_1}\cdots a_{si_s}T^{i_1,\cdots,i_r}$$
where
$$T^{i_1\cdots i_s}=T(dx^{i_1},\cdots,dx^{i_s}).$$
\end{defi} 
The space of contravariant tensors is
$$\displaystyle \mathbf{E}\otimes \mathbf{E}\otimes\cdots\otimes \mathbf{E}=\bigotimes^s\mathbf{E}.$$
Contravariant vectors are contravariant tensors of rank $1$. 

\noindent We can in the same way define mixed tensors $W:\displaystyle \otimes^s\mathbf{E}^\star\otimes\otimes^r\mathbf{E}\to\R$ of $s$-tuples of covectors and $r$-tuples of vectors
$$W(\alpha_1,\cdots,\alpha_s,v_1,\cdots,v_r)=a_{1i_1}\cdots a_{si_s}W^{i_1\cdots i_s}_{j_1\cdots j_r}v_1^{j_1}\cdots v_r^{j_r};$$
with $W^{i_1\cdots i_s}_{j_1\cdots j_r}=W(dx^{i_1},\cdots,\partial_{j_r}).$

\noindent We have seen that 
\begin{equation}
\label{3028}
C_{ijk}(x)=g_{ih}C^h_{jk}=\dfrac{1}{2}\left(\dot{\partial_k}g_{ij}+\dot{\partial_j}g_{ik}-\dot{\partial_i}g_{jk}\right)=\dfrac{1}{2}\dot{\partial_k}g_{ij}.
\end{equation}
$C_{jik}$ is symmetric in $i,\,j,\,k$ and $C_{ijk}(x,y)y^k=0$.

\noindent Each tangent space $\mathbf{M}_x$ may be regarded as an $n$-dimensional Riemannian space with the Riemann metric
$$g_{ij}(x,y)dy^idy^j$$ 
where $x=(x^i)$ is fixed. Then the $C^i_{jk}$ are the Christoffel symbols of $\mathbf{M}_x$. This a very special type of Riemannian space since the $\gamma_{ijk}$ of a general Riemannian space are not symmetric in all indices.

\noindent For $y$ with $L(x,y)>0$ we have $L(x,l)=1$, $l^i=\dfrac{y^i}{L(x,y)}$.

\noindent The set
$$I_x=\{y\in \mathbf{M}_x\,/\,L(x,y)=1\}$$
is called the indicatrix at the point $x$. It is a hypersurface of $M_x$ written in parametric form as
$$y^i=y^i(u^\alpha),\,\alpha=1,\cdots,n-1.$$
$I_x$ has the induced Riemannian metric 
$$g_{\alpha\beta}(u)=g_{ij}(x,y(u))B_\alpha^iB_\beta^j,\,\,B_\alpha^i=\dfrac{\partial y^i}{\partial u^\alpha}.$$
Differentiating $L(x,y(u))=1$ with respect to $u^\alpha$ we get 
$$l_iB_\alpha^i=0$$
which implies that $l_i$ are covariant components of the normal vector of $I_x$. In terms of the angular metric
$$h_{ij}(x,y)=g_{ij}-l_il_j=L(\dot{\partial}_jl_i)$$
the induced metric may be written 

\begin{equation}\label{30001}g_{\alpha\beta}(u)=h_{ij}(x,y(u))B_\alpha^iB_\beta^j.\end{equation}
Thus the Riemannian metric $g_{\alpha\beta}$ of $I_x$ may be regarded as the induced metric from the angular metric tensor $h$.

\noindent We have seen that
$$h_{ij}(x,y)y^i=0$$
so the rank $r$ of the matrix $(h_{ij})$ must be less than $n$. As 
$$\rank((g_{ij})-(l_il_j))³|\rank(g_{ij}-\rank(l_il_j))|=n-1,$$
we have $\rank((g_{ij})-(l_il_j))=n-1$. Hence
\begin{prop}
The components $h_{ij}$ of the angular metric tensor $h$ constitute a matrix of rank $n-1$. A non-trivial solution $(v^i)$ to the system of linear equations 
$$h_{ij}v^j=0,\,i=1,\cdots,n$$
must be proportional to $y^i$.
\end{prop}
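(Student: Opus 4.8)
The plan is to treat everything pointwise at a fixed $(x,y)$ with $y\in\mathbf{M}_x^\star$, so that the assertion becomes a purely linear-algebraic statement about the symmetric $n\times n$ matrix $(h_{ij})=(g_{ij})-(l_il_j)$. First I would record the two structural facts that drive the argument: that $(h_{ij})$ is symmetric, since both $(g_{ij})$ and the outer product $(l_il_j)$ are symmetric, and that $(l_il_j)$ is an outer product and hence has rank at most $1$ (in fact exactly $1$, because $l_il^i=1$ forces the covector $(l_i)$ to be nonzero).

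For the lower bound on the rank I would use subadditivity of matrix rank applied to $(g_{ij})=(h_{ij})+(l_il_j)$. This gives
$$n=\rank(g_{ij})\leqslant\rank(h_{ij})+\rank(l_il_j)\leqslant\rank(h_{ij})+1,$$
where $(g_{ij})$ is invertible by the regularity hypothesis in \defiref{g}. Hence $\rank(h_{ij})\geqslant n-1$, which is exactly the estimate already sketched immediately before the statement.

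For the upper bound I would invoke the identity $y^ih_{ij}=0$ from \eqref{3010}. By the symmetry of $h$ this also reads $h_{ij}y^j=0$, so the vector $(y^j)$ lies in the kernel of $(h_{ij})$; and $y\neq 0$ because the conical domain $\mathbf{M}_x^\star$ excludes the origin. A nontrivial kernel forces $(h_{ij})$ to be singular, so $\rank(h_{ij})\leqslant n-1$. Combining the two bounds yields $\rank(h_{ij})=n-1$, which is the first assertion.

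For the second assertion I would apply the rank--nullity theorem: $\dim\ker(h_{ij})=n-(n-1)=1$, and we have already exhibited the nonzero element $(y^j)$ of this kernel. Consequently the solution set of $h_{ij}v^j=0$ is the one-dimensional line spanned by $(y^j)$, so any nontrivial solution $(v^j)$ must be proportional to $y^j$. There is no real analytic obstacle here; the only points deserving care are the verification that $y\neq 0$ (supplied by the definition of $\mathbf{M}_x^\star$) and the use of the symmetry of $h$ to read $y^ih_{ij}=0$ as a genuine right-kernel condition. The entire content is the bookkeeping that an invertible symmetric matrix perturbed by a rank-one term drops in rank by exactly one precisely because it acquires the explicit null direction $y$.
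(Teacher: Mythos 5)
Your proof is correct and takes essentially the same route as the paper: the lower bound $\rank(h_{ij})\geqslant n-1$ comes from the rank inequality applied to $g_{ij}=h_{ij}+l_il_j$ (the paper phrases it as $\rank((g_{ij})-(l_il_j))\geqslant|\rank(g_{ij})-\rank(l_il_j)|=n-1$, which is the same subadditivity you use), and the upper bound from the explicit null direction $y$ given by $y^ih_{ij}=0$, after which rank--nullity forces the kernel to be the line through $y$. Your explicit checks of the symmetry of $h$, of $\rank(l_il_j)=1$ via $l_il^i=1$, and of $y\neq 0$ are just careful bookkeeping that the paper leaves implicit.
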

Consider the Taylor expansion of the function $L^2(x,y)$ at a point $y_0\in I_x$:
$$L^2(x,y)=L^2(x,y_0)+\left[\dot{\partial}_iL^2\right](y^i-y_0^i)+\left[\dfrac{\dot{\partial}_i\dot{\partial}_jL^2}{2}\right](y^i-y_0^i)(y^j-y_0^j)+\cdots$$
where $\left[\cdots\right]$ means that $y=y_0$ is assumed. From the homogeneity of $L$ we have
$$\left[\dot{\partial_i}L^2\right]y^i_0=2L^2(x,y_0)$$
and
$$\left[\dot{\partial_i}\dot{\partial_j}L^2\right]l^i_0=\dot{\partial_j}L^2(x,y_0)$$
so that:
$$L^2(x,y)=g_{ij}(x,y_0)y^iy^j+\cdots.$$
For $y_0\in I_x$, the set
$$I_x(y_0)=\big\{y\,/\,g_{ij}(x,y_0)y^iy^j=1\big\}$$
is called the osculating indicatrix at $y_0$. Therefore the indicatrix $I_x$ is osculating with the osculating indicatrix $I_x(y_0)$ at each point $y_0$.
\begin{rem}
In Riemannian geometry, the geodesic $X(t)$ starting at the origin with initial speed $(v^i)$ has Taylor expansion in the chart
$$X^i(t)=tv^i-\dfrac{1}{2}\Gamma_{jk}^iv^jv^k+O(t^3).$$
\end{rem}
We would like to give some important Finsler spaces but before we proceed, we first recall the classical definition of the covariant derivative.

\noindent The covariant derivative is a generalization of the classical notion of directional derivative: $\nabla_uv$, where $u$ is a vector defined at a point $p$ of a vector space and $v$ a vector defined in a neighborhood of $p$.

\noindent Suppose first that a Riemann manifold $\mathbf{M}$ is embedded into the Euclidean space $(\R^n,\langle\cdot\,;\cdot\rangle)$ by $\psi:U\subset \mathbf{M}\to \R^n$ such that the tangent space at $\psi\left(p\right)\in M\subset\R^n$ is spanned by the vectors
$$\dfrac{\partial\psi}{\partial x_i}_{|_p},\,1²i²d.$$
The metric is
$$g_{ij}=\langle\dfrac{d\psi}{dx^i},\dfrac{d\psi}{dx^j}\rangle.$$
Consider a vector field $v=v^j\dfrac{\partial\psi}{\partial x^j}$. One has
$$\dfrac{\partial v}{\partial x^i}=\dfrac{\partial v^j}{\partial x^i}\dfrac{\partial\psi}{\partial x^j}+v^j\dfrac{\partial^2\psi}{\partial x^i\partial x^j}.$$
The last term is not tangent to $\mathbf{M}$ but we can write 
$$\dfrac{\partial^2\psi}{\partial x^i\partial x^j}=\Gamma_{ij}^k\dfrac{\partial\psi}{\partial x^k}+n$$
with $n$ is a vector normal to the tangent space.

\noindent The covariant derivative $\nabla_{e_i}v$ is just the tangential part of $\dfrac{\partial v}{\partial x^i}$:
$$\nabla_{e_i}v=\dfrac{\partial v}{\partial x^i}-n=\left(\dfrac{\partial v^k}{\partial x^i}+v^j\Gamma^k_{ij}\right)\dfrac{\partial\psi}{\partial x^k}.$$
Now
$$\langle\dfrac{\partial^2\psi}{\partial x^i\partial x^j},\dfrac{\partial \psi}{\partial x^l}\rangle=\Gamma_{ij}^k\langle\dfrac{\partial\psi}{\partial x^k},\dfrac{\partial\psi}{\partial x^l}\rangle=\Gamma_{ij}^kg_{kl}.$$ 
On the other hand 
$$\dfrac{\partial g_{ab}}{\partial x^c}=\langle\dfrac{\partial^2\psi}{\partial x^c\partial x^a},\dfrac{\partial\psi}{\partial x^b}\rangle+\langle\dfrac{\partial\psi}{\partial x^a},\dfrac{\partial^2\psi}{\partial x^c\partial x^b}\rangle$$
which implies, using the symmetry of the scalar product and swapping the order of partial differentiation
$$\dfrac{\partial g_{jk}}{\partial x^i}+\dfrac{\partial g_{ki}}{\partial x^j}-\dfrac{\partial g_{ij}}{\partial x^k}=2\langle\dfrac{\partial^2\psi}{\partial x^i\partial x^j},\dfrac{\partial\psi}{\partial x^k}\rangle$$
or 
$$g_{kl}\Gamma_{ij}^k=\dfrac{1}{2}\left(\dfrac{\partial g_{jl}}{\partial x^i}+\dfrac{\partial g_{li}}{\partial x^j}-\dfrac{\partial g_{ij}}{\partial x^l}\right).$$

\noindent For the general case: Given coordinates $x^i,i=0,1,2,\cdots$, any tangent vector can be described by its components in the basis: $e_i=\dfrac{\partial}{\partial x^i}$. The covariant derivative of a basis vector is again a vector and we can write
$$\nabla_{e_i}e_j=\Gamma^k_{ij}e_k.$$
We require that: a covariant derivative $\nabla$ at a point $p$ of a smooth manifold $\mathbf{M}$ assigns a tangent vector $(\nabla_vu)_p$ to each pair consisting of a tangent vector $v$ at $p$ and a vector field $u$ defined in a neighborhood of $p$ such that
\begin{itemize}
\item $(\nabla_{gx+hy}u)_p=g\left(p\right)(\nabla_{x}u)_p+h\left(p\right)(\nabla_{y}u)_p$
\item $(\nabla_{v}(u+w))_p=(\nabla_{v}u)_p+(\nabla_{v}w)_p$
\item $(\nabla_{v}(fu))_p=f\left(p\right)(\nabla_{v}u)_p+(\nabla_{v}f)_pu_p$ with $\displaystyle (\nabla_{v}f)=\lim_{t\to 0}t^{-1}(f(\varphi(t))-f\left(p\right))$ where $\varphi:\left[-1,1\right]\to \mathbf{M},\,\varphi(0)=p,\,\varphi^\prime(0)=v$.
\end{itemize}
In coordinates if $v=v^ie_i$ and $u=u^je_j$ we get
$$\nabla_vu=\nabla_{v^ie_i}(u^je_j)=\left( v^iu^j\Gamma_{ij}^k+v^i\dfrac{\partial u^k}{\partial x^i}\right)e_k$$
(take the curves $\varphi^i:\left[-1,1\right]\to M,\,t\mapsto p+te_i$). The covariant derivative is the usual derivative along the coordinates with correction terms which tell how the coordinates change. We introduce the notation
$$\nabla_{e_j}v=v_{;j}^se_s,\, v_{;j}^s=\dfrac{\partial v^i}{\partial x^j}+v^k\Gamma_{kj}^i.$$
The covariant derivative of a type $(r,s)$ tensor field along $e_c$ is given by
\begin{equation}
\begin{split}
(\nabla_{e_c}T)^{a_1\cdots a_r}_{b_1\cdots b_s}&=\dfrac{\partial}{\partial x_c}T^{a_1\cdots a_r}_{b_1\cdots b_s}+\Gamma_{dc}^{a_1}T^{da_2\cdots a_r}_{b_1\cdots b_s}+\cdots+\Gamma_{dc}^{a_r}T^{a_1\cdots a_{r-1}d}_{b_1\cdots b_s}\\
&-\Gamma_{b_1c}^{d}T^{a_1\cdots a_r}_{db_2\cdots b_s}-\cdots-\Gamma_{b_sc}^{d}T^{a_1\cdots a_r}_{b_1b_2\cdots b_{s-1}d}.
\end{split}
\end{equation}
\begin{rem}
The covariant derivative of the (covariant) metric tensor is zero:
$$g_{mn;k}=\dfrac{\partial g_{mn}}{\partial x^k}-\Gamma_{mk}^sg_{sn}-\Gamma^s_{nk}g_{ms}=0.$$ 
\end{rem}
Indeed we have
$$g^{\alpha\nu}g_{\nu\beta}=\delta_\beta^\alpha.$$
Let us take any derivative $\delta$ of this identity; we get:
$$\delta g^{ij}g_{jl}+g^{ij}\delta g_{jl}=0.$$
Multiplying this relation by $g^{rl}$ we have:
$$\delta g^{ij}g_{jl}g^{rl}=\delta g^{ij}g_j^r=-g^{rl}g^{ij}\delta g_{jl}.$$
We now make the substitution $r\to j,\,l\to n,\,j\to m$ and obtain
\begin{equation}
\label{3029}
\delta g^{ij}=-g^{jn}g^{im}\delta g_{mn}.
\end{equation}
Besides it can be shown that
\begin{equation}
\label{3030}
\dfrac{\partial g^{ij}}{\partial x^k}+g^{lj}\Gamma_{lk}^i+g^{il}\Gamma_{lk}^j=0
\end{equation}
with
\begin{equation}
\label{3031}
g^{lj}\Gamma_{lk}^i=g^{jn}(\delta^i_s)\Gamma_{nk}^s=g^{jn}g^{im}g_{ms}\Gamma_{nk}^s
\end{equation}
\begin{equation}
\label{3032}
g^{il}\Gamma_{lk}^j=g^{im}(\delta^j_s)\Gamma_{mk}^s=g^{im}g^{jn}g_{ns}\Gamma_{mk}^s
\end{equation}
Substituting \eqref{3029}, \eqref{3031} and \eqref{3032} into \eqref{3030} we arrive at
$$\dfrac{\partial g_{mn}}{\partial x^k}=\Gamma_{mk}^sg_{sn}+\Gamma^s_{nk}g_{ms}\iff g_{mn;k}=0.$$
\subsection{Locally Minkowski space}
Let $g_{ij}$ be the fundamental tensor of an $n$-dimensional Finsler space $\mathbf F^n=(\mathbf M^n,L(x,y))$. We have two extreme cases: $g_{ij}$ depends on $x$ alone and on $y$ alone. In the first case we have a Riemannian space  and from equation \eqref{3011} we have $C_{ijk}=0$. Since $C_{ijk}$ are the components of the Cartan tensor, the dependence on $x$ alone is independent of the choice of coordinate system.

On the other hand, the dependence on $y$ alone is characterized by $\partial_k g_{ij}=0$. Then \defiref{g} shows that $g_{ij}$ depends on $y$ alone, if and only if $L$ depends on $y$ alone. But $\partial_i L$ and $\partial_k g_{ij}$ do not constitute tensor fields and hence this motivates the following definition
\begin{defi}
A Finsler space $\mathbf F^n= (\mathbf M^n, L(x,y))$ is called a locally Minkowski space, if $M^n$ is covered by coordinate neighborhoods in each of which $L$ does not depend on the coordinates $(x^{i})$. Such coordinates $(x^{i})$ are called adapted.
\end{defi}
\subsection{Link with symplectic manifolds}
\label{u}
We refer in this part to \citep{shen1, shen2}. We first introduce the following
\begin{defi}
A Minkowski norm on a vector space $\mathbf V$ is function $L:\mathbf V\to \R_{+}^\times$ such that
\begin{enumerate}
\item $L$ is smooth on $\mathbf V\setminus\{0\}$,\\
\item$L$ is homogeneous of degree $1$,\\
\item for all $y\in\mathbf V^\times$, the symmetric bilinear form
\begin{equation}
\begin{split}
g_y: &\mathbf{V}\times \mathbf{V}\to \R\\
&(u,v)\mapsto\dfrac{1}{2}\dfrac{\partial^2 L^2(y+su+tv)}{\partial s\partial t}|_{t=s=0}=\dfrac{1}{2}\dfrac{\partial^2 L^2}{\partial y^{i}\partial y^j}(y)
\end{split}
\end{equation}
is positive definite.
\end{enumerate}
\end{defi}
Here a Finsler manifold $\mathbf M$ is a manifold with a Minkowski norm at the tangent space of each of its points. This Minkowski norm is required to be smooth on $T\mathbf M\setminus\{0\}$. We recall
\begin{defi}[Legendre transformation]
The Legendre transformation $l:\mathbf V\to \mathbf V^\star$ is defined as $l(y)=g_y(y,.)$ for $y\in \mathbf V\setminus\{0\}$, and $l(0)=0$.
\end{defi}
\begin{defi}[Global Legendre transformation]
Let $T\mathbf M$ be the tangent bundle and $\pi$ the canonical projection on $\mathbf M$. The Legendre transformation is defined as
\begin{equation}
\begin{split}
\mathcal{L}:T\mathbf{ M}&\to T^*\mathbf {M} \\
&\xi\mapsto l_{\pi(\xi)}(\xi).
\end{split}
\end{equation}
It is a bijection and a diffeomorphism $T\mathbf M\setminus\{0\}\to T^\star\mathbf M\setminus\{0\}$.
\end{defi}
From \chapref{chap3} it follows that one has a canonical symplectic structure on $T^\star\mathbf M\setminus\{0\}$. No such canonical symplectic structure is known for the tangent bundle however. But if $\mathbf M $ is a Finsler manifold, then $T\mathbf M\setminus\{0\}$ has canonical symplectic structure induced by the so-called Hilbert $1$-form on $T\mathbf M\setminus\{0\}$.
\begin{defi}[Hilbert $1$-form]
Let $L$ be a Finsler norm on $\mathbf M$. Then the Hilbert $1$-form $\eta\in \Omega^1(T\mathbf M\setminus\{0\})$ is defined as
$$\eta|_y=-g_{ij}(y)y^{i}dx^j|_y,y\in T\mathbf M\setminus\{0\}.$$
\end{defi} 
The next proposition shows that $\eta$ is globally defined.
\begin{prop}
If $\mathcal{L}:T\mathbf M\to T^*\mathbf M$ is the Legendre transformation induced by a Finsler norm, then
$$\mathcal{L}^\star\alpha=-\eta,$$
$d\eta$ is a symplectic form for $T\mathbf M\setminus\{0\}$, and $\mathcal{L}:T\mathbf M\setminus\{0\}\to T^\star\mathbf M\setminus\{0\}$ is a symplectic mapping.
\end{prop}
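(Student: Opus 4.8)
The plan is to reduce the entire statement to the single local identity $\mathcal{L}^\star\alpha=-\eta$, after which both symplectic assertions follow formally. First I would pin down the coordinate expression of the Legendre transformation. If $(x^i,y^i)$ are the natural coordinates on $T\mathbf M$ and $(x^i,\xi_i)$ the associated cotangent coordinates on $T^\star\mathbf M$, then for $y=y^i\dot\partial_i$ one has $\mathcal{L}(y)=g_y(y,\cdot)$, whose $j$-th component is $\sum_i g_{ij}(x,y)y^i=y_j$; equivalently $\xi_j\circ\mathcal{L}=y_j=g_{jk}y^k$ while $x^j\circ\mathcal{L}=x^j$ since $\mathcal{L}$ covers the identity on $\mathbf M$. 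Because $y_j=\dot\partial_jF$, the fiber Jacobian of $\mathcal{L}$ is $\partial y_j/\partial y^k=\dot\partial_k\dot\partial_jF=g_{jk}$, which is nondegenerate by the regularity hypothesis on $(g_{ij})$; this is exactly what realizes $\mathcal{L}$ as the local diffeomorphism that the earlier definition records as a global diffeomorphism $T\mathbf M\setminus\{0\}\to T^\star\mathbf M\setminus\{0\}$.

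Next I would compute the pullback of the tautological form. Using $\alpha=\sum_j\xi_j\,dx^j$ together with $\mathcal{L}^\star(dx^j)=dx^j$, one obtains
$$\mathcal{L}^\star\alpha=\sum_j(\xi_j\circ\mathcal{L})\,dx^j=\sum_{j,k}g_{jk}y^k\,dx^j.$$
Comparing with $\eta=-g_{ij}y^i\,dx^j$ and relabeling indices, the symmetry $g_{jk}=g_{kj}$ yields $\mathcal{L}^\star\alpha=-\eta$. This incidentally shows that $\eta$ is globally defined, since $\alpha$ is intrinsic on $T^\star\mathbf M$ and $\mathcal{L}$ is globally defined.

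Then I would differentiate. Because $\omega=-d\alpha$ and $d$ commutes with pullback, applying $d$ to $\mathcal{L}^\star\alpha=-\eta$ gives $\mathcal{L}^\star(d\alpha)=-d\eta$, i.e. $\mathcal{L}^\star\omega=d\eta$. Now $d\eta$ is closed automatically, and it is nondegenerate because it is the pullback, under the diffeomorphism $\mathcal{L}$, of the nondegenerate form $\omega$ established in \chapref{chap3}: concretely $(d\eta)_p(u,v)=\omega_{\mathcal{L}(p)}(d\mathcal{L}_p u,d\mathcal{L}_p v)$ with $d\mathcal{L}_p$ an isomorphism, so $d\eta$ is symplectic on $T\mathbf M\setminus\{0\}$. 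Finally, the relation $\mathcal{L}^\star\omega=d\eta$ is precisely the defining condition for $\mathcal{L}$ to be a symplectomorphism between $(T\mathbf M\setminus\{0\},d\eta)$ and $(T^\star\mathbf M\setminus\{0\},\omega)$, which closes the argument.

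The only genuinely delicate point is the nondegeneracy of $d\eta$. It is tempting to verify it by expanding $d\eta=-dy_i\wedge dx^i$ directly, but this produces horizontal $dx^k\wedge dx^i$ terms that obscure the structure. Routing nondegeneracy through the pullback of the symplectic $\omega$ by the diffeomorphism $\mathcal{L}$ avoids that computation entirely, so the real work is confined to the coordinate identity $\mathcal{L}^\star\alpha=-\eta$ and to checking that the fiber Jacobian of $\mathcal{L}$ equals $(g_{ij})$.
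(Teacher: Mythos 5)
Your proposal is correct and follows essentially the same route as the paper's (very terse) proof: the paper also obtains $\mathcal{L}^\star\alpha=-\eta$ by expanding the left-hand side from the definition, and deduces non-degeneracy of $d\eta$ from that of $d\alpha$ via the diffeomorphism $\mathcal{L}$. You have merely supplied the details the paper leaves implicit — the coordinate identity $\xi_j\circ\mathcal{L}=g_{jk}y^k$, the fiber Jacobian $\partial y_j/\partial y^k=g_{jk}$, and the explicit pullback relation $\mathcal{L}^\star\omega=d\eta$ underlying both symplectic claims — all of which are accurate.
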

\begin{proof}
The first claim follows directly from the definition by expanding the left hand side. Since $d\alpha$ is non-degenerate, it follows that $d\eta$ is non-degenerate.
\end{proof}
We recall that the geodesic spray is given by
\begin{equation}\displaystyle \mathbb{G}|_y=y^{i}\dfrac{\partial}{\partial x^{i}}|_y-2G^{i}(y)\dfrac{\partial}{\partial y^{i}}|_y, y\in T\mathbf M\setminus\{0\}.\end{equation}
\begin{prop}
In a Finsler space $\mathbf F= (\mathbf M, L)$, the Hilbert $1$-form $\eta$ and the geodesic spray $\mathbb{G}$ satisfy 
$$d\eta(\mathbb{G},.)=d(\dfrac{1}{2}L^2),$$
so $X_{\frac{1}{2}L^2}=\mathbb{G}$. What is more $X_L= \mathbb{G}/L$. 
\end{prop}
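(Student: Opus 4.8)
The plan is to work in the natural coordinates $(x^i,y^i)$ on $T\mathbf M\setminus\{0\}$ and reduce the statement to the homogeneity identities and the definition of the spray coefficients recalled above. First I would rewrite the Hilbert form: since $F=\tfrac12 L^2$ is $2$-homogeneous in $y$, Euler's theorem gives $y_i=g_{ij}y^j=\dot{\partial}_iF$, so $\eta=-\dot{\partial}_iF\,dx^i$. Exterior differentiation then produces
$$d\eta=-(\partial_j\dot{\partial}_iF)\,dx^j\wedge dx^i-(\dot{\partial}_j\dot{\partial}_iF)\,dy^j\wedge dx^i=-(\partial_j\dot{\partial}_iF)\,dx^j\wedge dx^i-g_{ij}\,dy^j\wedge dx^i,$$
where I used $\dot{\partial}_i\dot{\partial}_jF=g_{ij}$.

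Next I would contract with $\mathbb{G}=y^k\partial_k-2G^k\dot{\partial}_k$, using $\iota_{\mathbb{G}}dx^i=y^i$, $\iota_{\mathbb{G}}dy^i=-2G^i$ and the Leibniz rule $\iota_{\mathbb{G}}(\alpha\wedge\beta)=(\iota_{\mathbb{G}}\alpha)\beta-\alpha(\iota_{\mathbb{G}}\beta)$. The term in $dy^j\wedge dx^i$ yields $2g_{ij}G^j\,dx^i+g_{ij}y^i\,dy^j$, which by $g_{ij}G^j=G_i$ and $g_{ij}y^i=y_j=\dot{\partial}_jF$ becomes $2G_i\,dx^i+\dot{\partial}_jF\,dy^j$. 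The term in $dx^j\wedge dx^i$ yields $-(\partial_j\dot{\partial}_iF)y^j\,dx^i+(\partial_j\dot{\partial}_iF)y^i\,dx^j$; here both homogeneity inputs enter. On one hand the definition $2G_i=(\dot{\partial}_i\partial_kF)y^k-\partial_iF$ (with symmetry of mixed partials) gives $(\partial_j\dot{\partial}_iF)y^j=2G_i+\partial_iF$; on the other hand differentiating Euler's relation $y^k\dot{\partial}_kF=2F$ with respect to $x^j$ gives $(\partial_j\dot{\partial}_iF)y^i=2\partial_jF$. Substituting, the $dx^j\wedge dx^i$ contribution collapses to $(\partial_iF-2G_i)\,dx^i$.

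Adding the two pieces, the $2G_i\,dx^i$ terms cancel and one is left with $\iota_{\mathbb{G}}d\eta=\partial_iF\,dx^i+\dot{\partial}_jF\,dy^j=dF=d(\tfrac12L^2)$, which is the first assertion. Since $d\eta$ is nondegenerate by the previous proposition, $\mathbb{G}$ is the unique Hamiltonian vector field of $\tfrac12L^2$, i.e. $X_{\frac12L^2}=\mathbb{G}$. For the last claim I would note $dF=d(\tfrac12L^2)=L\,dL$ and use that the interior product is $C^\infty(T\mathbf M\setminus\{0\})$-linear in the vector slot: $d\eta(\tfrac1L\mathbb{G},\cdot)=\tfrac1L\,d\eta(\mathbb{G},\cdot)=\tfrac1L\,L\,dL=dL$, so again by nondegeneracy $X_L=\mathbb{G}/L$.

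I expect the one genuinely delicate step to be the cancellation of the $2G_i\,dx^i$ terms in the $dx\wedge dx$ part: it works only because the definition of $G_i$ and the $x$-derivative of Euler's identity feed in with exactly matching coefficients, so that the spray coefficients generated by the ``horizontal'' part of $d\eta$ are precisely those reintroduced by the vertical $-2G^k\dot{\partial}_k$ part of $\mathbb{G}$. Everything else is bookkeeping with the interior product and the relations $g_{ij}G^j=G_i$ and $g_{ij}y^i=\dot{\partial}_jF$.
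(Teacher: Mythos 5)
Your proof is correct and follows essentially the same route as the paper: a direct local-coordinate computation of $d\eta(\mathbb{G},\cdot)$ showing it equals $\partial_iF\,dx^i+\dot{\partial}_iF\,dy^i=d\left(\tfrac{1}{2}L^2\right)$, with nondegeneracy of $d\eta$ (from the preceding proposition) and the homogeneity relation $dF=L\,dL$ handling the identifications $X_{\frac{1}{2}L^2}=\mathbb{G}$ and $X_L=\mathbb{G}/L$. The only cosmetic difference is that the paper feeds in the Christoffel expression $2G^i=\gamma^i_{jk}y^jy^k$ via its equation \eqref{y} and \theoref{z}, whereas you use the equivalent definitional formula $2G_i=(\dot{\partial}_i\partial_kF)y^k-\partial_iF$ together with the $x$-differentiated Euler identity, which if anything makes the cancellation of the $2G_i\,dx^i$ terms more transparent.
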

\begin{proof}
Using equation \eqref{y} and \theoref{z} we have
\begin{equation}
\begin{split}
d\eta(\mathbb G, .)&=\left(\left(\dfrac{\partial g_{ij}}{\partial x^s}-\dfrac{\partial g_{is}}{\partial x^j}\right)y^{i}y^j+2g_{is}G^{i}\right)dx^s+\dfrac{1}{2}\dfrac{\partial L^2}{\partial y^{i}}dy^{i}\\
&=\dfrac{1}{2}\dfrac{\partial L^2}{\partial x^{i}}dx^{i}+\dfrac{1}{2}\dfrac{\partial L^2}{\partial y^{i}}dy^{i}.
\end{split}
\end{equation} 
The second assertion is immediate.
\end{proof}
\subsection{Conformal change of a Finsler metric}
The absolute length $|v|$ of a tangent vector $v$ at a point $x$ of a Finsler space $\mathbf F^n=(\mathbf M, L)$ is defined as the value $L(x,v)$ provided that $L(x,y)>0$ for any $y$. On the other hand, if $g_{ij}(x,y)\xi^{i}\xi^j$ is positive-definite, then we get the length $|v|_y$ of $v$ relative to $y$ which is equal to $\displaystyle \sqrt{g_{ij}(x,y)v^{i}v^j}$.

As to the angle $\theta$ between two vectors $u$ and $v$ at $x$ it is defined by,

$$\displaystyle\cos(\theta)=\dfrac{\displaystyle g_{ij}(x,y)u^{i}v^j}{ \sqrt{g_{ij}(x,y)v^{i}v^j} \sqrt{g_{ij}(x,y)u^{i}u^j}}.$$
Consequently, this notion of angle is not symmetric. Thus, the above $\theta$ may be called the angle of $v$ with respect to $u$. If we take $y$ as $u$, then we get the angle of $v$ with respect to $y$ as 
$$\displaystyle\cos(\theta)=\dfrac{\displaystyle y_iv^{i}}{L(x,y) \sqrt{g_{ij}(x,y)v^{i}v^j}},$$
where $y_i=g_i(x,y)y^j$.
\begin{defi}
Let $\mathbf F^n=(\mathbf M, L)$ and $^\star\mathbf F^n=(\mathbf M, ^\star L)$ be two Finsler spaces on the same underlying manifold $\mathbf{M}$. If the angles of any tangent vector with respect to any $y$ in $\mathbf F^n$ and $^\star\mathbf F^n$, are equal to each other then $^\star \mathbf F$ is called conformal to $\mathbf F^n$ and the change $L\to ^\star L$ of the metrics is called a conformal change of metrics.
\end{defi}
Hence $^\star \mathbf F $ is conformal to $\mathbf F^n$ if and only if 
\begin{equation}
\label{30002}
^\star L^{2} \,^\star g_{pq}(x,y)v^pv^q(y_rv^r)(y_sv^s)-L^2g_{pq}(x,y)v^pv^q(^\star y_rv^r)(^\star y_sv^s)=0
\end{equation}
for any $x$, $y$ and $v$.
\begin{theo}[\citep{matsu}, p. 784]
The change $L\to ^\star L$ of metrics is conformal, if and only if we have a function $c(x)$ of position alone, satisfying $^\star L= e^{c(x)}L.$
\end{theo}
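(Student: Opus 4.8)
The plan is to prove the two implications separately. The ``if'' direction is a one-line substitution, while the ``only if'' direction requires reading the conformality relation \eqref{30002} as a polynomial identity at a fixed point and then exploiting the Finslerian homogeneity to remove the fibre dependence.

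For sufficiency I would assume ${}^\star L = e^{c(x)}L$ with $c$ depending on position alone. Then ${}^\star F = \tfrac12 {}^\star L^2 = e^{2c(x)}F$, and since $e^{2c(x)}$ is constant in $y$ it passes through the $y$-derivatives $\dot{\partial}_i\dot{\partial}_j$, so that ${}^\star g_{ij} = e^{2c(x)}g_{ij}$ and hence ${}^\star y_i = {}^\star g_{ij}y^j = e^{2c(x)}y_i$. Substituting these into the left member of \eqref{30002}, both the first and the second term collapse to the common expression $e^{4c(x)}L^2\,g_{pq}v^pv^q\,(y_rv^r)(y_sv^s)$, so their difference vanishes identically and the change is conformal.

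For necessity, fix $x$ and a direction $y$ in the conical domain and regard \eqref{30002} as an identity in the free vector $v$. Writing $\phi(v)=y_rv^r$, ${}^\star\phi(v)={}^\star y_rv^r$ for the two linear forms and $Q(v)=g_{pq}v^pv^q$, ${}^\star Q(v)={}^\star g_{pq}v^pv^q$ for the two quadratic forms, the relation becomes
$${}^\star L^2\,{}^\star Q(v)\,\phi(v)^2 = L^2\,Q(v)\,{}^\star\phi(v)^2.$$
Working in the unique factorization domain $\R[v^1,\dots,v^n]$, I would first show that $\phi$ and ${}^\star\phi$ are proportional. Nondegeneracy of the fundamental tensor (\defiref{g}) with $n\geqslant 2$ gives $\phi,{}^\star\phi\not\equiv 0$ (as $y\neq 0$ forces $y_i=g_{ij}y^j\neq 0$) and ensures $Q$ has rank $\geqslant 2$, so $Q$ is not the square of a linear form; were $\phi$ and ${}^\star\phi$ non-proportional they would be coprime degree-one irreducibles, and the identity would force $\phi^2\mid Q$, a contradiction. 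Writing ${}^\star\phi=\mu\,\phi$ and cancelling the common factor $\phi(v)^2$ then leaves ${}^\star L^2\,{}^\star Q(v)=\mu^2 L^2\,Q(v)$ for all $v$, that is ${}^\star g_{ij}(x,y)=\kappa(x,y)\,g_{ij}(x,y)$ for a positive scalar $\kappa$.

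It remains to see that $\kappa$ is independent of $y$, which is where the homogeneity of the Finsler structure is used. Contracting ${}^\star g_{ij}=\kappa g_{ij}$ with $y^iy^j$ and invoking \eqref{3001} gives ${}^\star L^2=\kappa L^2$; contracting with $y^j$ and using \eqref{3003} gives ${}^\star y_i=\kappa y_i$. On the other hand the Euler identity \eqref{3005} applied to ${}^\star L$ yields ${}^\star y_i=\tfrac12\dot{\partial}_i({}^\star L^2)=\tfrac12(\dot{\partial}_i\kappa)L^2+\kappa y_i$. Comparing the two expressions for ${}^\star y_i$ forces $(\dot{\partial}_i\kappa)L^2=0$, and since $L^2>0$ on the domain we get $\dot{\partial}_i\kappa=0$, i.e. $\kappa=\kappa(x)$. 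As $\kappa={}^\star L^2/L^2>0$, setting $c(x)=\tfrac12\log\kappa(x)$ gives ${}^\star L=e^{c(x)}L$. I expect the main obstacle to be precisely the proportionality step $\phi\parallel{}^\star\phi$: one must argue cleanly, via unique factorization together with the nondegeneracy of $g_{ij}$, that a full-rank quadratic form cannot be divisible by the square of a linear form, so that the only way to balance the two degree-four sides is for the two Legendre covectors to agree up to scale; once this is established, the reduction to ${}^\star g=\kappa g$ and the elimination of the fibre dependence through \eqref{3005} are routine.
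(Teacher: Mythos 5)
The paper does not actually prove this theorem: it states it with a bare citation to Matsumoto (p.~784), so there is no internal proof to compare against, and your argument supplies one. It is correct. Sufficiency is indeed the one-line substitution $^\star g_{ij}=e^{2c(x)}g_{ij}$, $^\star y_i=e^{2c(x)}y_i$ into \eqref{30002}. For necessity, your key step is sound: at fixed $(x,y)$ the relation \eqref{30002} is a polynomial identity in $\R[v^1,\dots,v^n]$; non-proportional linear forms $\phi=y_rv^r$ and $^\star\phi={}^\star y_rv^r$ would be coprime primes in this UFD, forcing $\phi^2\mid Q$ where $Q(v)=g_{pq}v^pv^q$, which contradicts the nondegeneracy required in \defiref{g} (rank $n\geqslant 2$, so $Q$ is not a scalar multiple of the square of a linear form); and both $\phi,{}^\star\phi$ are nonzero precisely because $\det(g_{ij})\not=0$ and $\det({}^\star g_{ij})\not=0$ make $y\mapsto y_i$ injective. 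Cancelling $\phi^2$ then yields $^\star g_{ij}=\kappa(x,y)g_{ij}$, and your elimination of the fibre variable — contracting with $y^iy^j$ and $y^j$ via \eqref{3001} and \eqref{3003}, then comparing with the Euler identity $^\star y_i=\tfrac12\dot\partial_i({}^\star L^2)$ of \eqref{3004}--\eqref{3005} to get $(\dot\partial_i\kappa)L^2=0$ — is exactly the classical Knebelman-type step that underlies Matsumoto's treatment, so in substance your route reconstructs the standard proof, with the UFD factorization serving as a clean replacement for the usual ad hoc specializations of $v$. Three small points you should make explicit: the rank argument silently assumes $n\geqslant 2$ (for $n=1$ the statement is trivially true, since any two $1$-homogeneous positive functions on a ray are proportional at each $x$); $\dot\partial_i\kappa=0$ yields $\kappa=\kappa(x)$ on each connected conical domain $\mathbf{M}_x^\star$, which is the sense in which the theorem is to be read; and the positivity $\kappa={}^\star L^2/L^2>0$ needed to set $c(x)=\tfrac12\log\kappa(x)$ is guaranteed by item 4 of \defiref{g}, with smoothness of $\kappa$ in $x$ coming from that same quotient.
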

\section{Example of Finsler spaces: Finsler spaces with m-th Root metric}
We consider an $n$-dimensional Finsler manifold $\mathbf{F}^n= (\mathbf{M},L)$ with such a metric $L(x,y)$ that
\begin{equation}
\label{3033}
L^m=a_{hi\ldots k}(x)y^{h}y^{i}\ldots y^k,
\end{equation}
where $a_{hi\ldots k}(x)$ are components of an $m$-th covariant symmetric tensor, i.e., $L^m$ is a symmetric homogeneous polynomial in $y^{i}$ of the $m$-th degree.
\begin{defi}
The Finsler metric $L$ given by \eqref{3033} is called an $m$-th root metric.
\end{defi}
In the case $m=2$ the metric $L$ is Riemannian, and in the cases $m=3$ and $4$ these metrics are called cubic and quartic, respectively.
\begin{ex}
\label{f}
We have two well-known cubic metrics:
\begin{itemize}
\item $n=2$, \,\,\,$(L_1)^3=(y^1)^3+(y^2)^3$.\\
\item $n=3$, \,\,\,$(L_1)^3=(y^1)^3+(y^2)^3+(y^3)^3-3y^1y^2y^3$.
\end{itemize}
\end{ex}
Now we are concerned with $F^n=(M,L(x,y))$ with $m$-th root metric $L$, given by \eqref{3033}. Differentiating \eqref{3033} by $y^{i}$ and then by $y^j$, we obtain
$$mL^{m-1}l_i=ma_{ij\ldots k}y^j\ldots y^k,$$
$$m(m-1)L^{m-2}l_il_j+mL^{m-2}h_{ij}=m(m-1)a_{ijk\ldots h}(x)y^k\ldots y^h$$
with $l_i$ given by equation \eqref{3006} and $h_{ij}$ given by equation \eqref{3008}. Consequently, we get

\begin{equation}\label{3034} l_i=a_i,\quad h_{ij}=(m-1)(a_{ij}-a_ia_j),\end{equation}
where we put 
\begin{equation}\label{3035}a_i=\displaystyle \dfrac{a_{ij\ldots k}y^j\ldots y^k}{L^{m-1}},\quad a_{ij}=\dfrac{a_{ijk\ldots h}y^k\ldots y^h}{L^{m-2}}.\end{equation}
From \eqref{3034} it follows that the fundamental tensor $g_{ij}=h_{ij}+l_il_j$ is given by 
\begin{equation}
g_{ij}=(m-1)a_{ij}-(m-2)a_ia_j.
\end{equation}
\begin{defi}
A domain $D$ of a Finsler space with $m$-th root metric is called regular, if the matrix $(a_{ij})$ is non-singular at every point of $D$.
\end{defi}
\begin{ex}
We deal with the cubic metrics $L_1$ and $L_2$ of Example \ref{f}.
For $L_1$ we have
$$a_{11}=\dfrac{a_{11j}y^j}{L_1}=\dfrac{y^1}{L_1},\quad a_{22}=\dfrac{y^2}{L_1},\quad a_{12}=\dfrac{a_{12j}y^j}{L_1}=0,$$
which imply $\det(a_{ij})=\dfrac{y^1y^2}{L_1^2}$. Hence $L_1$ is regular, if $y^1y^2­0$.
For $L_2$ we have
$$a_{11}=\dfrac{y^1}{L_2},\quad a_{22}=\dfrac{y^2}{L_2},\quad a_{12}=\dfrac{-y^3}{L_2},$$
which imply $$-\det(a_{ij})=\dfrac{L^3_2}{4}=\dfrac{1}{8}(y^1+y^2+y^3)\left((y^2-y^3)^2+(y^3-y^1)^2+(y^1-y^2)^2\right)$$
Hence $L_2$ is regular in the domain where $y^1+y^2+y^3\not=0$. We shall restrict our consideration of Finsler spaces with $m$-th root metric to a regular domain in the above sense.
\end{ex}
Let $G^{i}(x,y)$ the fundamental function given in \eqref{30003}. We set $\displaystyle G^{i}_{jk}=\dot{\partial}_j\dot{\partial}_kG^i$. We recall that by very definition $G^{i}$ is a homogeneous function of degree $2$, therefore it follows from Euler-relation that
$$G_{jk}^{i}(x,y)y^jy^k=2 G^{i}.$$
\begin{defi}
A Finsler space $\mathbf F=(\mathbf M, L)$ is called a Berwald space, if $G_{jk}^{i}$ are functions of position alone, meaning that they do not depend on $y$. 
\end{defi}
The purpose of the present paragraph is to consider three-dimensional Finsler spaces with cubic metric which are Berwald spaces and locally Minkowski spaces. All cubic metrics of dimension three are divided into the following six classes according to the normal forms: in the notation $(x^{i})=(x,y,z)$ and $(y^{i})=(p,q,r)$, we have \citep[p. 862]{matsu}
\begin{equation}
\label{3036}
\begin{split}
 L^3&=c_1p^3+c_2q^3+c_3r^3+6bpqr, \quad c_1c_2c_3b\not=­0,\\
L^3&=c_1p^3+c_2q^3+c_3r^3, \quad c_1c_2c_3­\not=0,\\
 L^3&=c_1p^3+c_2q^3+6bpqr, \quad c_1c_2b\not=­0,\\
 L^3&=c_1p^3+6bpqr, \quad c_1b­\not=0,\\
 L^3&=6bpqr, \quad b­\not=0,\\
 L^3&=3apr^2+bq^3, \quad ab\not=­0.
\end{split}
\end{equation}
\begin{theo}[\citep{matsu}, p. 886]
The three-dimensional Finsler spaces with cubic metric of the normal form given in equation \eqref{3036} are Berwald spaces, if and only if   
\begin{enumerate}
\item $c_1=c_1(x)$, $c_2=c_2(y)$, $c_3=c_3(z)$, $b^3=kc_1c_2c_3$, with $k\not=­0$ a constant. The space is locally Minkowski.\\
\item $c_1=e^df_1(x)$, $c_2=e^df_2(y)$, $c_3=e^df_3(z)$, $d=d(x,y,z)$, $8b^3=-e^{3d}f_1f_2f_3$. The space is conformal to a Minkowski space with the metric\\
\begin{equation}
\label{3037}
L^3=p^3+q^3+r^3-3pqr.
\end{equation}
\item $c_1=c_1(x)$, $c_2=c_2(y)$, $c_3=c_3(z)$. The space is locally Minkowski. \\
\item $c_1=c_1(x)$, $c_2=c_2(y)$, $b^3=c_1c_2w(z)$. The space is locally Minkowski.\\
\item $c_1=c_1(x)$, $b^3=c_1g(y,z)$.\\
\item The space is conformal to a Minkowski space with metric $(pqr)^{1/3}$.\\
\item $a=a(x,z)$, $b=b(y)$.
\end{enumerate}
\end{theo}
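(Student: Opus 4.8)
The plan is to reduce the Berwald condition to a statement about the geodesic coefficients and then to run that reduction through each of the six normal forms of \eqref{3036}. By the definition of a Berwald space, $\mathbf{F}=(\mathbf{M},L)$ is Berwald exactly when $G^i_{jk}=\dot{\partial}_j\dot{\partial}_k G^i$ is independent of $y$. Since $G^i$ is positively homogeneous of degree $2$ in $y$, Euler's theorem shows this is equivalent to requiring that each $G^i(x,y)$ be a quadratic form in $y$ with coefficients depending on $x$ alone; equivalently the Berwald curvature $\dot{\partial}_j\dot{\partial}_k\dot{\partial}_l G^i$ must vanish. So the whole problem becomes: for which coefficient functions is $2G^i=\gamma^i_{jk}(x,y)y^jy^k$ (see \theoref{z}) a genuine polynomial of degree $2$ in $y$?

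First I would assemble the ingredients specific to a cubic metric. Writing $A:=L^3=a_{hij}(x)y^hy^iy^j$, the quantities $a_i$ and $a_{ij}$ of \eqref{3035} give $l_i=a_i$, and the fundamental tensor is $g_{ij}=2a_{ij}-a_ia_j$ (the $m=3$ case of $g_{ij}=(m-1)a_{ij}-(m-2)a_ia_j$). Feeding this into the Finslerian Christoffel symbols \eqref{y} and contracting twice with $y$ produces $2G_i=\gamma_{ijk}y^jy^k$, in which the spatial derivatives $\partial_k a_{hij}$ enter linearly and every denominator is a power of $L=A^{1/3}$. Raising the index with $g^{ij}$ then expresses $2G^i$ as a finite sum of terms each carrying an explicit negative power of $A$. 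The Berwald requirement is precisely that, after collecting terms, all fractional powers of $A$ cancel and a quadratic form in $y$ remains.

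Next I would carry out this cancellation form by form. The two purely diagonal situations (with $a_{hij}$ supported on $a_{111}=c_1$, $a_{222}=c_2$, $a_{333}=c_3$) are the cleanest: there the condition that the $A^{-1}$-weighted pieces drop out forces each $c_\alpha$ to depend on its own coordinate only, giving item 3 (form $L^3=c_1p^3+c_2q^3+c_3r^3$) and, once the coupling $6bpqr$ is added so that $a_{123}=b$, items 1 and 2 (form $L^3=c_1p^3+c_2q^3+c_3r^3+6bpqr$). In the coupled forms the surviving cross terms produce, beyond the separation of variables, an algebraic tie between $b$ and $c_1c_2c_3$; demanding polynomiality pins this down to $b^3=kc_1c_2c_3$ in the locally Minkowski branch and to the scaling $8b^3=-e^{3d}f_1f_2f_3$ in the conformal branch, and likewise yields $b^3=c_1c_2w(z)$, $b^3=c_1g(y,z)$, and $a=a(x,z),\,b=b(y)$ for forms four, five and six. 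This bookkeeping is the main obstacle: for the forms containing $6bpqr$ the metric is non-diagonal, its inverse $g^{ij}$ is not transparent, and one must keep careful track of which combinations of $\partial_k a_{hij}$ can be absorbed into the quadratic part and which must vanish outright.

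Finally I would upgrade ``Berwald'' to the geometric conclusions. For the locally Minkowski claims the point is that, once each $c_\alpha$ is a function of its own variable, the change of coordinates $\bar x^\alpha=\int (c_\alpha)^{1/3}$ is admissible and turns $c_\alpha(\dot x^\alpha)^3$ into $(\dot{\bar x}^\alpha)^3$; the algebraic constraint (for instance $b^3=kc_1c_2c_3$) is exactly what makes the residual coupling coefficient constant in these adapted coordinates, so $L$ becomes independent of position and the space is locally Minkowski by definition. For the conformal claims I would factor the common $e^d$ out of the coefficients, apply the same normalization to the $f_\alpha$, and then invoke the conformal characterization above (${}^\star L=e^{c(x)}L$): the conditions $8b^3=-e^{3d}f_1f_2f_3$ (respectively the bare form $6bpqr$) are precisely those under which the reduced metric becomes the fixed Minkowski metric $L^3=p^3+q^3+r^3-3pqr$ of \eqref{3037} (respectively $(pqr)^{1/3}$), with $L=e^{d/3}\,{}^\star L$ exhibiting the conformal factor.
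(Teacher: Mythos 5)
First, a point of comparison: the paper itself gives no proof of this statement at all --- it is imported verbatim from Matsumoto (\citep{matsu}, p.~886) with a page citation, so there is no internal argument to measure you against. Your proposal must therefore stand on its own, and as it stands it is a strategy outline rather than a proof. The reduction you begin with is correct: Berwald is equivalent to $G^i$ being quadratic in $y$ with $x$-dependent coefficients (equivalently, vanishing of $\dot{\partial}_j\dot{\partial}_k\dot{\partial}_l G^i$), and via \theoref{z}, \eqref{y} and $g_{ij}=2a_{ij}-a_ia_j$ from \eqref{3035} this becomes the question of when all fractional powers of $A=L^3$ cancel in $2G^i$. But the entire mathematical content of the theorem lies in executing that cancellation for each of the six normal forms of \eqref{3036}, and this is exactly the step your proposal defers: you write that ``demanding polynomiality pins this down to $b^3=kc_1c_2c_3$'' and that the remaining cases ``likewise yield'' their conditions, without computing $g^{ij}$ for the non-diagonal forms, without exhibiting $2G^i$ as an explicit rational function of $y$, and without showing that the stated conditions are both necessary and sufficient for the fractional parts to drop out. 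For a non-diagonal cubic metric the inverse metric and the resulting $G^i$ are genuinely complicated (in Matsumoto's treatment this is a long case analysis), and it is not a priori clear, for instance, why necessity produces the specific algebraic tie $b^3=kc_1c_2c_3$ with $k$ \emph{constant} rather than some weaker relation, or why form four yields $b^3=c_1g(y,z)$ with no further constraint on $g$. Naming the obstacle (``this bookkeeping is the main obstacle'') is not the same as overcoming it; as written, the proof has a hole precisely where the theorem lives. You also misalign the correspondence between conditions and forms near the end (the condition $b^3=c_1c_2w(z)$ belongs to the third normal form $c_1p^3+c_2q^3+6bpqr$, $b^3=c_1g(y,z)$ to the fourth, the conformal-to-$(pqr)^{1/3}$ statement to the fifth, and $a=a(x,z)$, $b=b(y)$ to the sixth), which matters if the cases are to be checked one by one.

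By contrast, the final part of your sketch --- upgrading the coefficient conditions to the geometric conclusions --- is essentially sound and could be completed as stated: when each $c_\alpha$ depends on its own coordinate, the substitution $\bar{x}^\alpha=\int c_\alpha^{1/3}\,dx^\alpha$ normalizes the diagonal terms, the relation $b^3=kc_1c_2c_3$ makes the residual cross coefficient the constant $6k^{1/3}$, so $L$ loses its $x$-dependence in adapted coordinates and the space is locally Minkowski by the paper's definition; and in the branch $c_\alpha=e^{d}f_\alpha$ with $8b^3=-e^{3d}f_1f_2f_3$ the same normalization of the $f_\alpha$ exhibits $L=e^{d/3}\,{}^\star L$ with ${}^\star L$ the metric of \eqref{3037}, which is exactly the conformal characterization ${}^\star L=e^{c(x)}L$ recorded in the paper. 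So the defect is localized: supply the explicit computation of $G^i$ and the necessity argument for each normal form, and the remainder of your plan goes through.
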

\begin{cor}[\citep{matsu}, p. 886]
We have three-dimensional Berwald spaces with cubic metric of two kinds which are conformal to one of the Minkowski metrics
$$L_1=(p^3+q^3+r^3-3pqr)^{1/3},\quad L_2=(pqr)^{1/3}.$$
The conformal factors are arbitrary functions of $x,y,z$.
\end{cor}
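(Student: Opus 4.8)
The plan is to read the corollary off the preceding classification theorem, so the argument is essentially bookkeeping together with a normalization of the coefficients. First I would recall that, by the conformal-change theorem stated above (Matsumoto p.~784), a change ${}^\star L = e^{c(x)}L$ is conformal precisely when the factor $e^{c(x)}$ depends on position alone; hence to prove the corollary it suffices to single out, among the seven cases of the classification, those whose metric is conformal to a fixed Minkowski cubic normal form, and then to exhibit the admissible conformal factors.

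Next I would inspect the seven cases. Cases~1, 3 and 4 are locally Minkowski, so they carry their own adapted Minkowski representatives rather than being genuinely conformal to a distinct standard form, while cases~5 and 7 are the remaining Berwald types for which no conformal reduction is asserted. The two cases that are explicitly conformal to a standard Minkowski metric are Case~2, conformal to the metric \eqref{3037}, i.e. to $L_1^3 = p^3+q^3+r^3-3pqr$, and Case~6, conformal to $(pqr)^{1/3} = L_2$. These are exactly the two kinds announced in the statement.

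It then remains to check that in each of these cases the conformal factor may be an arbitrary function of $x,y,z$. In Case~2 the coefficients are $c_1 = e^{d}f_1(x)$, $c_2 = e^{d}f_2(y)$, $c_3 = e^{d}f_3(z)$ with $d = d(x,y,z)$ a free function and $8b^3 = -e^{3d}f_1f_2f_3$, so that
$$L^3 = e^{d}\left(f_1 p^3 + f_2 q^3 + f_3 r^3 - 3(f_1f_2f_3)^{1/3}\,pqr\right).$$
Since each $f_i$ depends on a single coordinate, the diagonal change of coordinates on $\mathbf{M}$ with $\tilde x = \int f_1^{1/3}dx$ (and similarly for the other two) is a legitimate coordinate transformation; it rescales the fibre coordinates as $\tilde p = f_1^{1/3}p$, $\tilde q = f_2^{1/3}q$, $\tilde r = f_3^{1/3}r$, and carries the form onto $L^3 = e^{d}(\tilde p^3 + \tilde q^3 + \tilde r^3 - 3\tilde p\tilde q\tilde r)$, whence $L = e^{d/3}L_1$ with the arbitrary conformal factor $e^{d/3}$. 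In Case~6 the normal form is $L^3 = 6b\,pqr$ from \eqref{3036}; writing $b = b(x,y,z)$ gives $L = (6b)^{1/3}(pqr)^{1/3} = (6b)^{1/3}L_2$, again an arbitrary positive conformal factor of position.

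The only genuinely nontrivial point is verifying that the diagonal rescaling $\tilde y^i = f_i^{1/3}y^i$ in Case~2 is indeed induced by an admissible coordinate transformation on the manifold and carries the scaled form onto the exact representative $L_1$ without disturbing the conformal-factor structure; this works precisely because the $f_i$ are each functions of a single coordinate. Everything else—namely that Cases~2 and 6 are exactly the conformal Berwald cases—is already contained in the preceding theorem, so no further obstacle arises.
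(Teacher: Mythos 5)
Your proposal is correct and matches the paper's treatment: the paper offers no independent proof, presenting the corollary as an immediate reading-off of items 2 and 6 of the preceding classification theorem, exactly as you do, and your normalization computation (the diagonal change $\tilde{x}=\int f_1^{1/3}\,dx$, etc., with $b=-\tfrac{1}{2}e^{d}(f_1f_2f_3)^{1/3}$ forced by $8b^3=-e^{3d}f_1f_2f_3$) correctly supplies the only detail left implicit. The lone caveat is cosmetic: in your Case 6 the factor $(6b)^{1/3}$ need not be positive, so one should restrict to the regular conical domain where $L>0$ (or absorb the sign into the cone) to match the form ${}^\star L=e^{c(x)}L$ of the conformal-change theorem.
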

\begin{ex}
Another example of Finsler metric $(\mathbf{M},L)$ with $m-th$ root metric is given by the following $4$-th root metric
\begin{equation*}
\begin{split}
L^4(x,y)&=(y_1+y_2+y_3+y_4)\cdot(y_1+y_2-y_3-y_4)\cdot(y_1-y_2+y_3-y_4)\cdot\\
&(y_1-y_2-y_3+y_4),
\end{split}
\end{equation*}
where $x=(x_1,x_2,x_3,x_4)$ and $y=(y_1,y_2,y_3,y_4)$. Indeed one easily verifies that it is invariant under all transpositions of the symmetric group $S_4$. 
\end{ex}
\section{Connections on Principal Bundles and Finslerian connections}
\begin{par}
According to the "standard model" of elementary particle physics, every fundamental force (boson) is associated with a kind of curvature. But the curvatures involved are not only the curvatures of space-time, but curvatures associated with the notion of a connection on a principal bundle. We assume that we are dealing with a right principal $\mathbf{G}$-bundle. We remind that it is a quadruple $(\mathbf{P},\mathbf{M},\mathbf{G},\pi)$. $\mathbf{P}$ is the total space, $\mathbf{M}$ the base manifold, $\mathbf{G}$ is a Lie group acting on the right on $\mathbf{P}$ and $\pi$ the projection is a submersion. We will denote it (the principal bundle) $\pi:\mathbf{P}\to \mathbf{M}$ or $\mathbf{G}\to \mathbf{P}\to \mathbf{M}$. See below.
\end{par}
\begin{defi}
\label{prbun}
Let $\mathbf P$ and $\mathbf M$ be two differentiable manifolds and $\pi:\mathbf P\to \mathbf M$ a surjective submersion. Let $\mathbf G$ be a Lie group with a right action on $\mathbf P$. Then $(\mathbf P,\mathbf M,\pi,\mathbf G)$ is a principal fiber bundle over $\mathbf M$ with structural group $\mathbf{G}$ if
\begin{itemize}
  \item $\mathbf G$ acts freely on $\mathbf P$: $\exists \quad a: p.a=p\iff a=e$.
  \item $\pi(p_1)=\pi(p_2)\iff$ $p_1=p_2.a$ ($p_1$ et $p_2$ are in the same orbit if and only if they have the same image under $\pi$ on $\mathbf M$); one identifies therefore $\mathbf M$ with the quotient under that equivalence relation. 
  \item $\mathbf P$ is locally trivial i.e. every $x\in \mathbf M$ has a neighborhood $U$, such that there exists a diffeomorphism $\psi:\pi^{-1}(U)\to U\times \mathbf G$: $\psi(p)=(\pi(p),\eta(p))$ with $\eta(p)\in\mathbf G$ and $\psi(p.a)=(\pi\left(p\right),\eta\left(p\right)).a$.  
\end{itemize}
\end{defi}
\begin{par}
Let $q\in \mathbf P$, $x=\pi(q)\in \mathbf M$, $p\in\pi^{-1}(x)$. Let the group right action be denoted by $\Phi$:
\begin{equation}
\begin{split}
\Phi&:\mathbf G\times \mathbf P\to \mathbf P\\
\Phi(g,p)&=\Phi_g(p):=p.g
\end{split}
\end{equation}
\end{par} 
\begin{par}
We can think of the group as an action which pushes points in the bundle along the fibers. 
\end{par}
\subsection{Horizontal spaces, vertical spaces, and connections}
Taking $p$ as previously. We define the vertical subspace $ V_p\subset T_p\mathbf P$ as
$$ V_p=\ker (\pi_{\star})_p, \text{with $\pi_{\star}$ the differential of $\pi$}.$$
A vector field $X$ on $\mathbf P$ is vertical if $X_p\in  V_p$ for any $p$. The Lie bracket of two vertical vector fields is again vertical. Indeed consider two vector fields $X$ and $Y$ belonging to $ V$ and $f\in \mathscr{C}^\infty(\mathbf M)$. We have
\begin{equation}
\begin{split}
\pi_{\star}\left[X,Y\right]_p(f)&:=\left[X,Y\right]_p(f\circ \pi)\\
&=X_p(Y(f\circ \pi))-Y_p(X(f\circ \pi))\\
&=X_p((\pi_{\star}\circ Y)(f))-Y_p((\pi_{\star}\circ X)(f))\\
&=0;
\end{split}
\end{equation}
where we have used the fact that
$$(\pi_{\star}\circ X)(f)=X(f\circ\pi)\quad \text{and}\quad (\pi_{\star}\circ Y)(f)=Y(f\circ\pi).$$
Therefore the vertical subspaces define a $\mathbf G$-invariant distribution $ V\subset T\mathbf P$; because $\pi\circ\Phi_g(p)=\pi$ implies that we have
$$(\Phi_g)_{\star}V_p= V_{p.g}.$$
In the absence of any extra structure, there is no natural way to choose a complement to $ V_p$ in $T_p\mathbf P$. This is in a sense what a connection provides.
\begin{defi}
A connection on $\mathbf P$ is a smooth choice of horizontal subspaces $ H_p\subset T_p\mathbf P$ complementary to $ V_p$:
$$T_p\mathbf P=V_p\oplus H_p$$
and such that $(\Phi_g)_{\star}H_p=H_{p.g}$. This also means that a connection is a $G$-invariant distribution $H\subset T\mathbf P$ complementary to $V$.
\end{defi} 
We visualize below the local representation of a connection and horizontal subspaces
\begin{figure}
\begin{center}
\tikzstyle{grisEncadre}=[dashed]
\tikzstyle{nfEncadre}=[thick]
\tikzstyle{nffEncadre}=[very thick]
\tikzstyle{ddEncadre}=[densely dotted]
\begin{tikzpicture}
\draw [grisEncadre] (0,0,0) -- (2.1,0,2.2) -- (2,2.2,2) -- (0,2.2,0) -- cycle;
\draw [grisEncadre] (-1.2,0,2.2) -- (0,-0.8,2.2) -- (0,1.6,2.2) -- (-1.2,2.4,2.2) -- cycle;
\draw [grisEncadre] (0,0,0) -- (-1.2,0,2.2);
\draw [grisEncadre] (2.1,0,2.2) -- (0,-0.8,2.2);
\draw [grisEncadre] (2,2.2,2) -- (0,1.6,2.2);
\draw [grisEncadre] (0,2.2,0) -- (-1.2,2.4,2.2);
\draw (-5,-4.3) ..controls +(3,1) and +(-4,0).. (5,-5);
\draw (-3,-6) ..controls +(3,1) and +(-4,0).. (7,-7);
\draw (-5,-4.3) -- (-3,-6);
\draw (5,-5) -- (7,-7);
\draw [grisEncadre] (0.3,-4.7) -- (-0.5,-4.8) -- (0.06,-5.3) -- (1,-5.3) -- cycle;
\node at (-0.7,0.27) {\tiny{$\bullet$}};
\node at (-1,0.16) {$q$};
\node at (0.2,-5) {\tiny{$\bullet$}};
\node at (2,1.97) {\tiny{$\bullet$}};
\node at (2,1.6) {$p$};
\node at (3.2,2.25) {\tiny{$\bullet$}};
\node at (4,1.8) {$\Phi_{g}(p) = p \cdot g$};
\draw (0.2,-5) ..controls +(-3,1.3) and +(0.7,-0.5).. (0.75,-0.3);
\draw (0.75,-0.3) ..controls +(0,0.1) and +(0,0).. (-0.3,0.1);
\draw (-0.3,0.1) ..controls +(-3,1) and +(0,0).. (4.4,2.5);
\draw [nfEncadre] (-2.5,1.3) -- (-0.7,0.25) -- (2,-1.22);
\node at (2.5,-1.4) {$T_{q}\mathbf G$};
\draw [->,>=latex] (2.1,2.2) to[bend left = 35] (3.1,2.4);
\node at (2.6,2.8) {$\Phi_{g}$};
\draw [ddEncadre] [->,>=latex] (-0.65,0.17) to[bend left = 20] (0.3,-4.6);
\draw [->,>=latex] (2.6,0.7) to[bend right = 25] (1.5,0.65);
\node at (3.1,0.6) {$T_{q}\mathbf P$};
\node at (-1.5,-3) {$\textup{fiber} \hspace{1mm} \mathbf G$};
\node at (0.6,-2.5) {$\pi$};
\node at (1.57,-4.89) {$x = \pi (p) = \pi (q)$};
\draw [->,>=latex] (1.5,-7.1) to[bend right = 20] (0.8,-5.4);
\node at (1.5,-7.5) {$T_{\pi (q)}\mathbf M$};
\node at (5.5,-6.5) {$\textup{base} \hspace{1mm} \mathbf M$};
\draw [nffEncadre] (-6,-1.5) ..controls +(0.3,0.3) and +(-0.2,0).. (-5.8,3.5);
\draw [nffEncadre] (-6,-1.5) ..controls +(0.3,0.3) and +(-0.2,0).. (-5.8,-6.4);
\node at (-6.35,-1.5) {$\mathbf P$};
\end{tikzpicture}
\end{center}
\caption{Visualizing a local representation of a principal bundle.}
\end{figure}
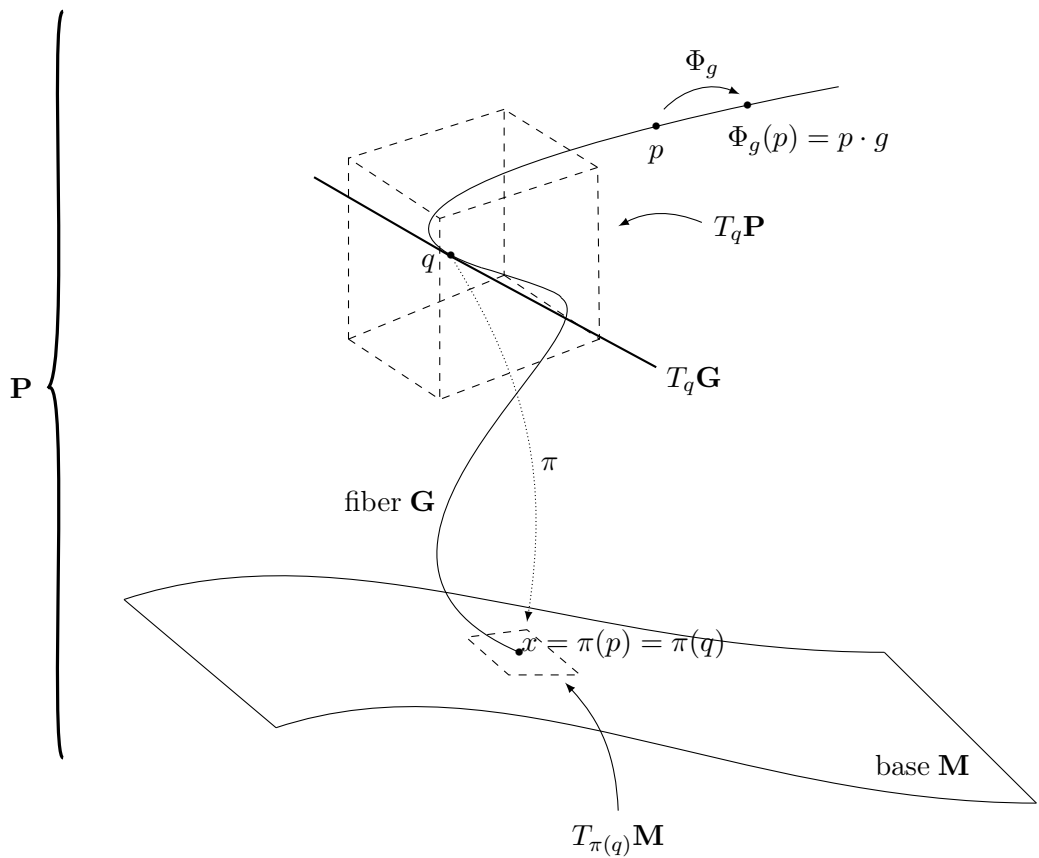
\clearpage
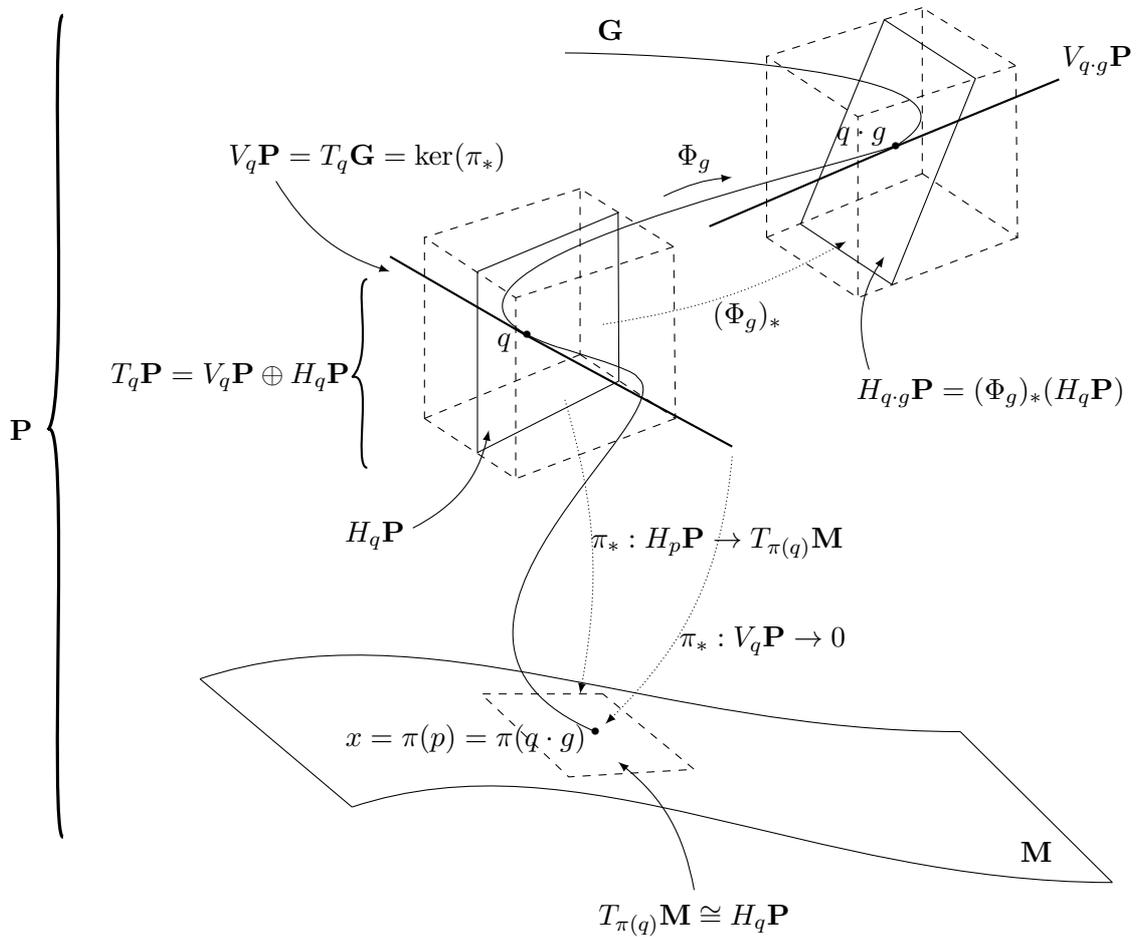
\begin{figure}
\begin{center}
\tikzstyle{grisEncadre}=[dashed]
\tikzstyle{nfEncadre}=[thick]
\tikzstyle{nffEncadre}=[very thick]
\tikzstyle{ddEncadre}=[densely dotted]
\begin{tikzpicture}
\draw [grisEncadre] (4.5,2.4,0) -- (6.6,2.4,2.2) -- (6.5,4.6,2) -- (4.5,4.6,0) -- cycle;
\draw [grisEncadre] (3.3,2.4,2.2) -- (4.5,1.6,2.2) -- (4.5,4,2.2) -- (3.3,4.8,2.2) -- cycle;
\draw [grisEncadre] (4.5,2.4,0) -- (3.3,2.4,2.2);
\draw [grisEncadre] (6.6,2.4,2.2) -- (4.5,1.6,2.2);
\draw [grisEncadre] (6.5,4.6,2) -- (4.5,4,2.2);
\draw [grisEncadre] (4.5,4.6,0) -- (3.3,4.8,2.2);
\draw [grisEncadre] (0,0,0) -- (2.1,0,2.2) -- (2,2.2,2) -- (0,2.2,0) -- cycle;
\draw [grisEncadre] (-1.2,0,2.2) -- (0,-0.8,2.2) -- (0,1.6,2.2) -- (-1.2,2.4,2.2) -- cycle;
\draw [grisEncadre] (0,0,0) -- (-1.2,0,2.2);
\draw [grisEncadre] (2.1,0,2.2) -- (0,-0.8,2.2);
\draw [grisEncadre] (2,2.2,2) -- (0,1.6,2.2);
\draw [grisEncadre] (0,2.2,0) -- (-1.2,2.4,2.2);
\draw (-5,-4.3) ..controls +(3,1) and +(-4,0).. (5,-5);
\draw (-3,-6) ..controls +(3,1) and +(-4,0).. (7,-7);
\draw (-5,-4.3) -- (-3,-6);
\draw (5,-5) -- (7,-7);
\draw [grisEncadre]  (0.3,-4.5) -- (-1.3,-4.5) -- (-0.15,-5.6) -- (1.5,-5.5) -- cycle;
\node at (-0.7,0.27) {\tiny{$\bullet$}};
\node at (0.2,-5) {\tiny{$\bullet$}};
\node at (4.15,2.77) {\tiny{$\bullet$}};
\draw (0.2,-5) ..controls +(-3,1.3) and +(0.7,-0.5).. (0.75,-0.3);
\draw (0.75,-0.3) ..controls +(0,0.1) and +(0,0).. (-0.3,0.1);
\draw (-0.3,0.1) ..controls +(-3,1) and +(-0.2,-0.2).. (4.2,2.8);
\draw (4.2,2.8) ..controls +(1.5,1) and +(1,0).. (-0.2,4);
\draw [nfEncadre] (-2.5,1.3) -- (-0.7,0.25) -- (2,-1.22);
\draw [nfEncadre] (1.7,1.7) -- (4.15,2.76) -- (6.3,3.65);
\draw (0.5,1.88) -- (0.51,-0.35) -- (-1.35,-1.3) -- (-1.35,1.1) -- cycle;
\draw (5.2,3.66) -- (4.1,0.93) -- (2.9,1.73) -- (4,4.44) -- cycle;
\draw [->,>=latex] (1.5,-7.1) to[bend right = 20] (0.5,-5.4);
\node at (1.5,-7.5) {$T_{\pi (q)}\mathbf M \cong  H_{q}\mathbf P$};
\node at (6,-6.6) {$\mathbf M$};
\node at (-1.5,-5.1) {$x = \pi (p) = \pi (q \cdot g)$};
\draw [ddEncadre] [->,>=latex] (2,-1.35) to[bend left = 20] (0.33,-4.9);
\draw [ddEncadre] [->,>=latex] (-0.2,-0.6) to[bend left = 13] (0,-4.5);
\node at (2.4,-3.8) {$\pi_{\ast} : V_{q}\mathbf P \rightarrow 0$};
\node at (1.8,-2.5) {$\pi_{\ast} : H_{p}\mathbf P \rightarrow T_{\pi (q)}\mathbf M$};
\draw [->,>=latex] (-2.2,-2.3) to[bend right = 25] (-1.2,-1);
\node at (-2.7,-2.4) {$H_{q}\mathbf P$};
\node at (-1,0.16) {$q$};
\node at (3.7,2.9) {$q \cdot g$};
\draw [->,>=latex] (3.8,-0.2) to[bend left = 25] (4,1.3);
\node at (5.4,-0.5) {$H_{q \cdot g}\mathbf P = (\Phi_{g})_{\ast}(H_{q}\mathbf P)$};
\draw [ddEncadre] [->,>=latex] (0.3,0.4) to[bend right = 10] (3.5,1.5);
\node at (2.2,0.5) {$(\Phi_{g})_{\ast}$};
\draw [->,>=latex] (1.1,2.1) to[bend left = 10] (2,2.35);
\node at (1.5,2.6) {$\Phi_{g}$};
\node at (6.8,3.9) {$V_{q \cdot g}\mathbf P$};
\node at (0.4,4.3) {$\mathbf G$};
\draw [->,>=latex] (-4,2.3) to[bend right = 20] (-2.5,1.1);
\node at (-2.8,2.6) {$V_{q}\mathbf P = T_{q}\mathbf G = \ker (\pi_{\ast})$};
\draw [nfEncadre] (-3,-0.3) ..controls +(0.3,0.3) and +(-0.2,0).. (-2.8,1);
\draw [nfEncadre] (-3,-0.3) ..controls +(0.3,0.1) and +(-0.2,0).. (-2.8,-1.5);
\draw [nffEncadre] (-7,-1) ..controls +(0.3,0.3) and +(-0.2,0).. (-6.8,4.5);
\draw [nffEncadre] (-7,-1) ..controls +(0.3,0.3) and +(-0.2,0).. (-6.8,-6.4);
\node at (-7.35,-1) {$\mathbf P$};
\node at (-4.6,-0.3) {$T_{q}\mathbf P = V_{q}\mathbf P \oplus H_{q}\mathbf P$};
\end{tikzpicture}
\end{center}
\caption{Horizontal spaces.}
\end{figure}
The action of $\mathbf G$ on $\mathbf P$ defines a map $\varrho:\mathfrak{g}\to \mathcal{H}(\mathbf P)$, with $\mathcal{X}(\mathbf P)$ the set of vector fields on $\mathbf P$. It assigns to every $v\in \mathfrak{g}$, the vector field $\varrho(v)$ whose value at $p$ is given by
$$\varrho_p(v)=\dfrac{d}{dt}(p.e^{tv})\vert_{t=0}.$$
Notice that 
$$\pi_{\star}\varrho_p(v)=\dfrac{d}{dt}\pi(p.e^{tv})\vert_{t=0}=\dfrac{d}{dt}\pi(p)\vert_{t=0}=0,$$
whence $\varrho(v)$ is a vertical vector field. In fact, since $\mathbf G$ acts freely on $\mathbf P$, the map $v\to \varrho_p(v)$ is an isomorphism between $\mathfrak{g}$ and $V_p$ for every $p$.
\begin{lem}
$$(\Phi_g)_{\star}\varrho(v)=\varrho(ad_{g^{-1}}v).$$
\end{lem}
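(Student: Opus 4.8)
The plan is to verify the stated identity pointwise as an equality of vector fields on $\mathbf{P}$, using only the definition of the fundamental vector field $\varrho$, the associativity of the right action, and the standard relation between the exponential map and conjugation in $\mathbf{G}$. Throughout I write $\Phi_g(q)=q\cdot g$ and recall that for a diffeomorphism the pushforward of a vector field is characterized by $((\Phi_g)_\star X)_{\Phi_g(q)}=(d\Phi_g)_q(X_q)$.

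First I would evaluate the left-hand side at the point $p\cdot g$. Applying the characterization above with $X=\varrho(v)$ and inserting the definition $\varrho_p(v)=\frac{d}{dt}(p\cdot e^{tv})\big|_{t=0}$, I would compute
\[
((\Phi_g)_\star \varrho(v))_{p\cdot g} = (d\Phi_g)_p\Big(\frac{d}{dt}(p\cdot e^{tv})\big|_{t=0}\Big) = \frac{d}{dt}\big(p\cdot e^{tv}\cdot g\big)\big|_{t=0},
\]
where the last equality simply moves $d\Phi_g$ inside the $t$-derivative, since $\Phi_g$ is smooth and $t\mapsto p\cdot e^{tv}$ is a curve through $p$ whose velocity at $t=0$ is $\varrho_p(v)$.

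Next I would evaluate the fundamental field associated to $ad_{g^{-1}}v$ at the same point. The key input is the conjugation formula $e^{t\,ad_{g^{-1}}v}=g^{-1}e^{tv}g$, valid because $ad_{g^{-1}}$ is the differential at the identity of the conjugation $a\mapsto g^{-1}ag$ and the exponential intertwines this group homomorphism with its differential. Using this together with the action axiom $(p\cdot a)\cdot b=p\cdot(ab)$, I would write
\[
\varrho_{p\cdot g}(ad_{g^{-1}}v) = \frac{d}{dt}\big((p\cdot g)\cdot e^{t\,ad_{g^{-1}}v}\big)\big|_{t=0} = \frac{d}{dt}\big(p\cdot g\cdot g^{-1} e^{tv} g\big)\big|_{t=0} = \frac{d}{dt}\big(p\cdot e^{tv}\cdot g\big)\big|_{t=0}.
\]
Comparing the two displays shows that the two vector fields take the same value at $p\cdot g$. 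Since $p$ is arbitrary and right multiplication by $g$ is a bijection of $\mathbf{P}$, every point of $\mathbf{P}$ is of the form $p\cdot g$, so the fields agree everywhere, which is the claim.

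I expect the only genuine point requiring care to be the conjugation identity $e^{t\,ad_{g^{-1}}v}=g^{-1}e^{tv}g$; everything else is bookkeeping with the definition of $\varrho$ and the associativity of the action. If one prefers not to invoke the exponential--conjugation relation as a black box, one can instead differentiate the curve $s\mapsto g^{-1}e^{sv}g$ at $s=0$ to identify its velocity as $ad_{g^{-1}}v$ and feed this directly into the definition of $\varrho_{p\cdot g}$, arriving at the same conclusion.
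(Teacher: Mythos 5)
Your proof is correct and follows essentially the same route as the paper: both arguments differentiate the curve $t\mapsto p\cdot e^{tv}\cdot g$ and use the conjugation identity $e^{t\,ad_{g^{-1}}v}=g^{-1}e^{tv}g$ (inserted via $p\cdot e^{tv}\cdot g=p\cdot g\cdot g^{-1}e^{tv}g$) to identify it with the flow defining $\varrho_{p\cdot g}(ad_{g^{-1}}v)$. Your version is in fact slightly more careful about basepoints, making explicit that the pushforward of the field is evaluated at $\Phi_g(p)=p\cdot g$, which the paper's computation leaves implicit.
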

\begin{proof}
By definition, at $p\in \mathbf P$ we have
\begin{equation}
\begin{split}
(\Phi_g)_{\star}\varrho_p(v)&=\dfrac{d}{dt}\Phi_g(p.e^{tv})\vert_{t=0}\\
&=\dfrac{d}{dt}(p.e^{tv}g)\vert_{t=0}\\
&=\dfrac{d}{dt}(p.gg^{-1}e^{tv}g)\vert_{t=0}\\
&=\dfrac{d}{dt}(p.ge^{tag_{g^{-1}}v})\vert_{t=0}\\
&=\varrho_{p.g}(ad_{g^{-1}}v).
\end{split}
\end{equation}
\end{proof}
 The horizontal subspace $H_p\subset T_p\mathbf P$ being a linear subspace, is annihilated by $k=\dim \mathbf G$ linear equations $T_p\mathbf P\to \R$ (its orthogonal). In other words, $H_p$ is the kernel of $k$ one-forms at $p$, the components of a one form $\omega$ at $p$ with values in a $k$-dimensional vector space. There is a natural such vector space, namely the Lie algebra of $\mathbf G$, and since $\omega$ annihilates horizontal vectors (vectors belonging to $H_p$ for any $p$), it is defined by what it does on vertical vectors, and we have a natural map $V_p\to \mathfrak{g}$ given by the inverse of $\varrho_p$. This prompts the following definition
 \begin{defi}
 The connection one-form of a connection $H\subset T\mathbf P$ is the $\mathfrak{g}$-valued one form $\omega$ defined by
  \begin{equation}
  \omega(X)=
\begin{cases}
      v& \text{if $X=\varrho(v)$ }, \\
      0& \text{if $X$ is horizontal};
\end{cases}
\end{equation}
and obeys to the identity
$$(\Phi_{g}){\star}\omega=ad_{g^{-1}}\omega.$$
 \end{defi}
 Now a form on $\mathbf P$ is said to be horizontal if it it annihilates the vertical vectors. The following picture shows how the previous definition works
 \clearpage
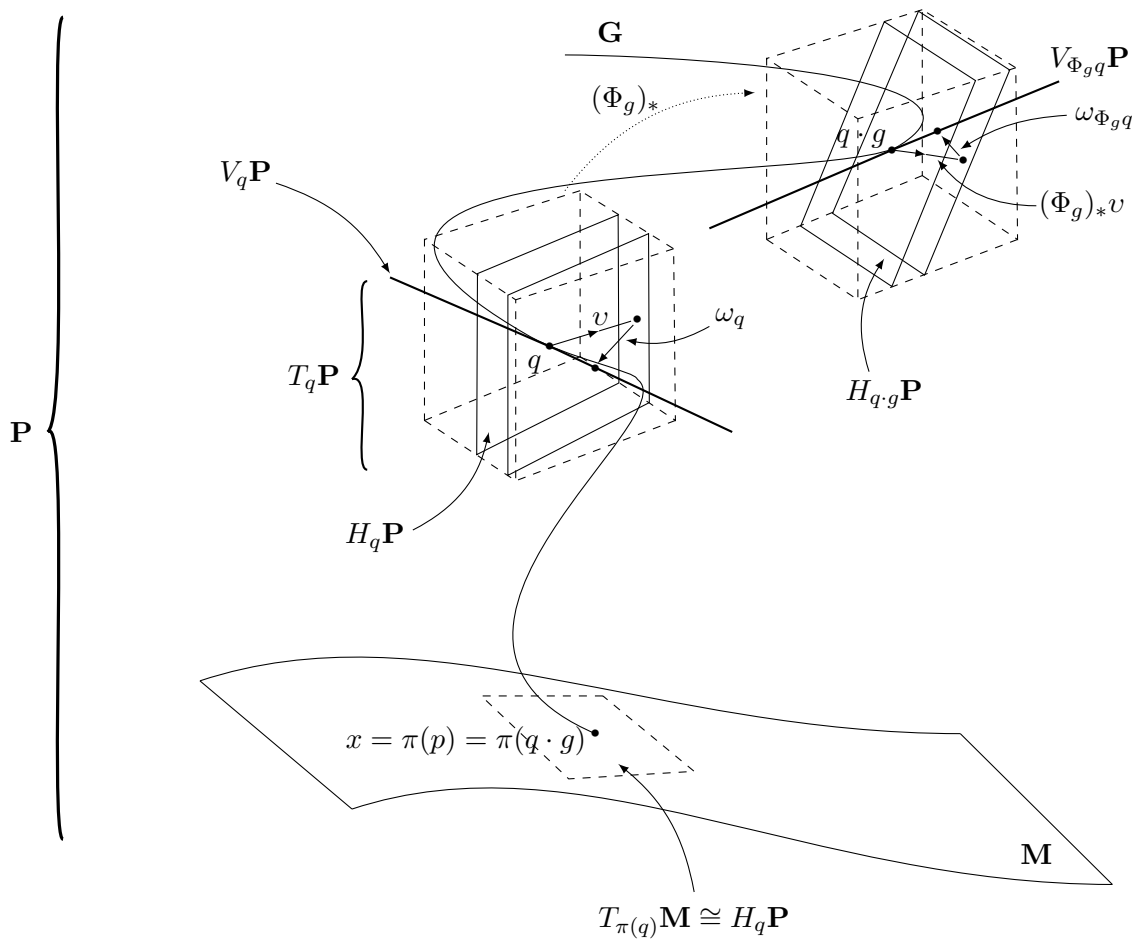
\begin{figure}
\begin{center}
\tikzstyle{grisEncadre}=[dashed]
\tikzstyle{nfEncadre}=[thick]
\tikzstyle{nffEncadre}=[very thick]
\tikzstyle{ddEncadre}=[densely dotted]
\begin{tikzpicture}
\draw [grisEncadre] (4.5,2.4,0) -- (6.6,2.4,2.2) -- (6.5,4.6,2) -- (4.5,4.6,0) -- cycle;
\draw [grisEncadre] (3.3,2.4,2.2) -- (4.5,1.6,2.2) -- (4.5,4,2.2) -- (3.3,4.8,2.2) -- cycle;
\draw [grisEncadre] (4.5,2.4,0) -- (3.3,2.4,2.2);
\draw [grisEncadre] (6.6,2.4,2.2) -- (4.5,1.6,2.2);
\draw [grisEncadre] (6.5,4.6,2) -- (4.5,4,2.2);
\draw [grisEncadre] (4.5,4.6,0) -- (3.3,4.8,2.2);
\draw [grisEncadre] (0,0,0) -- (2.1,0,2.2) -- (2,2.2,2) -- (0,2.2,0) -- cycle;
\draw [grisEncadre] (-1.2,0,2.2) -- (0,-0.8,2.2) -- (0,1.6,2.2) -- (-1.2,2.4,2.2) -- cycle;
\draw [grisEncadre] (0,0,0) -- (-1.2,0,2.2);
\draw [grisEncadre] (2.1,0,2.2) -- (0,-0.8,2.2);
\draw [grisEncadre] (2,2.2,2) -- (0,1.6,2.2);
\draw [grisEncadre] (0,2.2,0) -- (-1.2,2.4,2.2);
\draw (-5,-4.3) ..controls +(3,1) and +(-4,0).. (5,-5);
\draw (-3,-6) ..controls +(3,1) and +(-4,0).. (7,-7);
\draw (-5,-4.3) -- (-3,-6);
\draw (5,-5) -- (7,-7);
\draw [grisEncadre]  (0.3,-4.5) -- (-1.3,-4.5) -- (-0.15,-5.6) -- (1.5,-5.5) -- cycle;
\node at (-0.4,0.14) {\tiny{$\bullet$}};
\node at (0.2,-5) {\tiny{$\bullet$}};
\node at (4.10,2.74) {\tiny{$\bullet$}};
\draw (0.2,-5) ..controls +(-3,1.3) and +(0.7,-0.5).. (0.75,-0.3);
\draw (0.75,-0.3) ..controls +(0,0.1) and +(0.5,-0.3).. (-0.7,0.29);
\draw (-0.7,0.29) ..controls +(-4.3,2.5) and +(-0.8,-0.55).. (4.2,2.8);
\draw (4.2,2.8) ..controls +(1.6,1) and +(1,0).. (-0.2,4);
\draw [nfEncadre] (-2.5,1.05) -- (-0.3,0.09) -- (2,-1);
\draw [nfEncadre] (1.7,1.7) -- (4.15,2.76) -- (6.3,3.65);
\draw (0.5,1.88) -- (0.51,-0.35) -- (-1.36,-1.3) -- (-1.35,1.1) -- cycle;
\draw (0.9,1.63) -- (0.91,-0.61) -- (-0.95,-1.575) -- (-0.95,0.815) -- cycle;
\draw (5.2,3.66) -- (4.1,0.93) -- (2.9,1.73) -- (4,4.44) -- cycle;
\draw (5.65,3.8) -- (4.53,1.08) -- (3.32,1.9) -- (4.45,4.58) -- cycle;
\draw [->,>=latex] (1.5,-7.1) to[bend right = 20] (0.5,-5.4);
\node at (1.5,-7.5) {$T_{\pi (q)}\mathbf M \cong H_{q}\mathbf P$};
\node at (6,-6.6) {$\mathbf M$};
\node at (-1.5,-5.1) {$x = \pi (p) = \pi (q \cdot g)$};
\draw [->,>=latex] (-2.2,-2.3) to[bend right = 25] (-1.2,-1);
\node at (-2.7,-2.4) {$H_{q}\mathbf P$};
\node at (-0.6,-0.1) {$q$};
\node at (3.7,2.9) {$q \cdot g$};
\draw [->,>=latex] (3.8,-0.2) to[bend left = 25] (4,1.3);
\node at (4,-0.5) {$H_{q \cdot g}\mathbf P$};
\draw [ddEncadre] [->,>=latex] (-0.2,2.2) to[bend left = 25] (2.3,3.5);
\node at (0.6,3.4) {$(\Phi_{g})_{\ast}$};
\node at (6.7,3.9) {$V_{\Phi_{g}q}\mathbf P$};
\node at (0.4,4.3) {$\mathbf G$};
\draw [->,>=latex] (-4,2.3) to[bend left = 20] (-2.5,1.1);
\node at (-4.4,2.4) {$V_{q}\mathbf P$};
\draw [nfEncadre] (-3,-0.3) ..controls +(0.3,0.3) and +(-0.2,0).. (-2.8,1);
\draw [nfEncadre] (-3,-0.3) ..controls +(0.3,0.1) and +(-0.2,0).. (-2.8,-1.5);
\draw [nffEncadre] (-7,-1) ..controls +(0.3,0.3) and +(-0.2,0).. (-6.8,4.5);
\draw [nffEncadre] (-7,-1) ..controls +(0.3,0.3) and +(-0.2,0).. (-6.8,-6.4);
\node at (-7.35,-1) {$\mathbf P$};
\node at (-3.5,-0.3) {$T_{q}\mathbf P$};
\node at (0.2,-0.149) {\tiny{$\bullet$}};
\node at (0.75,0.5) {\tiny{$\bullet$}};
\draw [->,>=latex] (0.7,0.42) -- (0.23,-0.09);
\draw (0.25,0.34) -- (0.67,0.47);
\draw [->,>=latex] (-0.4,0.14) -- (0.25,0.34);
\node at (4.7,2.985) {\tiny{$\bullet$}};
\node at (5.04,2.6) {\tiny{$\bullet$}};
\draw [->,>=latex] (4.99,2.66) -- (4.75,2.93);
\draw [->,>=latex] (4.10,2.74) -- (4.55,2.68);
\draw (4.55,2.68) -- (4.97,2.615);
\draw [->,>=latex] (6.4,3.2) to[bend right = 14] (5,2.75);
\node at (6.9,3.15) {$\omega_{\Phi_{g}q}$};
\draw [->,>=latex] (6,2) to[bend left = 25] (4.7,2.61);
\node at (6.6,2) {$(\Phi_{g})_{\ast}\upsilon$};
\draw [->,>=latex] (1.7,0.4) to[bend left = 20] (0.6,0.22);
\node at (1.98,0.5) {$\omega_{q}$};
\node at (0.27,0.5) {$\upsilon$};
\end{tikzpicture}
\end{center}
\caption{$\mathbf G$-equivariance of the connection form.}
\end{figure}
 \begin{rem}
 A choice of connection (form) is in fact a choice of projection of $T_p\mathbf P$ on $V_p$ with the complement $H_p$ varying in an equivariant manner \rm{(}$\Phi_g{_{\star}}H_p=H_{p.g}$\rm{)}.  
 \end{rem}
 Let us see how one can project $T_p\mathbf P$ on the vertical space $V_p$. 
 \clearpage
 \begin{figure}
\begin{center}
\tikzstyle{grisEncadre}=[dashed]
\tikzstyle{nfEncadre}=[thick]
\tikzstyle{nffEncadre}=[very thick]
\tikzstyle{ddEncadre}=[densely dotted]
\begin{tikzpicture}[scale=1.5]
\draw [grisEncadre] (0,0) -- (2.7,-0.2) -- (2.7,2.3) -- (0,2.5) -- cycle;
\draw [grisEncadre] (-1.25,-1.25) -- (1.45,-1.45) -- (1.45,1.05) -- (-1.25,1.25) -- cycle;
\draw [grisEncadre] (0,0) -- (-1.25,-1.25);
\draw [grisEncadre] (2.7,-0.2) -- (1.45,-1.45);
\draw [grisEncadre] (2.7,2.3) -- (1.45,1.05);
\draw [grisEncadre] (0,2.5) -- (-1.25,1.25);
\draw (-0.1,-1.33) -- (1.62,-0.12) -- (1.45,2.39) -- (-0.3,1.18) -- cycle;
\draw (1,-1.42) -- (2.62,-0.19) -- (2.5,2.31) -- (0.75,1.11) -- cycle;
\draw [nfEncadre] (-2,0.8) -- (4.5,0.1);
\node at (0.6,0.53) {\tiny{$\bullet$}};
\node at (1.9,0.38) {\tiny{$\bullet$}};
\node at (2.45,1.85) {\tiny{$\bullet$}};
\draw (-2.8,0.3) to[bend left = 16] (3.7,-0.3);
\draw [->,>=latex] (2.43,1.80) -- (1.92,0.42);
\draw (1.5,1.175) -- (2.41,1.82);
\draw [->,>=latex] (0.6,0.53) -- (1.5,1.175);
\draw [->,>=latex] (0.68,0.56) -- (1.85,0.44);
\draw [nfEncadre] (0.7,-2.2) ..controls +(-0.2,0.1) and +(0,-0.1).. (-1.4,-2.1);
\draw [nfEncadre] (0.7,-2.2) ..controls +(-0.2,0.08) and +(0,-0.1).. (2.9,-2.1);
\node at (0.5,-2.6) {$T_{q}\mathbf P = V_{q}\mathbf P \oplus H_{q}\mathbf P$};
\node at (0.55,0.35) {$q$};
\node at (3.8,-0.5) {$\mathbf G$};
\node at (4.6,0.35) {$V_{q}\mathbf P$};
\node at (1.4,1.3) {$\mathcal{V}$};
\draw [->,>=latex] (1.8,3) to[bend left = 15] (1.35,2.15);
\node at (1.85,3.2) {$H_{q}\mathbf P$};
\end{tikzpicture}
\end{center}
\caption{Using $H_{q}\mathbf P$ to project $\mathcal{V} \in T_{q}\mathbf P$ on to $V_{q}\mathbf P$.}
\end{figure}
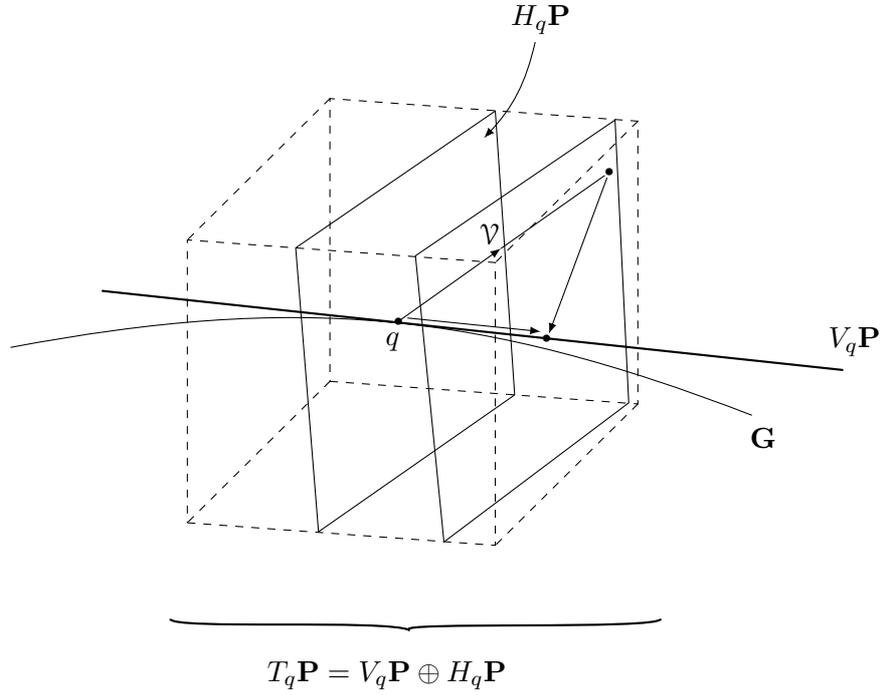
 \begin{par}
 Let $\omega_1$ a $1$-form with values in $\mathfrak{g}$ with and $\omega_2$ another $1$-form with values in $\mathfrak{g}$. We define the $2$-form
 $$\left[\omega_1,\omega_2\right]:(X,Y)\to \left[\omega_1(X),\omega_2(Y)\right]-\left[\omega_1(Y),\omega_2(X)\right]$$
 \end{par}
 \begin{par}
Consider now a connection with a given connection one form $\omega$; its curvature is defined by the formula
 \begin{equation}
\Omega=d\omega+\dfrac{1}{2}\left[\omega,\omega\right].
\end{equation}
\end{par}
We can thus rewrite the previous equation in the form
$$\Omega(X,Y)=d\omega(X,Y)+\left[\omega(X),\omega(Y)\right].$$
When $\omega$ is given in matrix form, i,e, there exists a representation of $\mathfrak{g}$ in some $M_n(\R)$, the matrix $\left[\omega(X),\omega(Y)\right]$ (the commutator of $\omega(X)$ and $\omega(Y)$) is just $\omega(X)\wedge\omega(Y)$:
$$\left[\omega(X),\omega(Y)\right]=\omega(X)\wedge\omega(Y);$$
because if $\omega=\omega_{ij}$ then
$$\displaystyle\omega\wedge\omega(X,Y)=\sum_k\omega_{ik}\wedge\omega_{kj}(X,Y)=\sum_k(\omega_{ik}(X))(\omega_{kj}(Y))-(\omega_{ik}(Y))(\omega_{kj}(X)),$$
with $(\omega_{ij}(X))$ denoting the matrix with components $\omega_{ij}(X)$.
\begin{par}
The curvature hence takes the form
$$\Omega=d\omega+\omega\wedge\omega$$
\end{par}
 Next we define the notions of Cartan geometry and connection
\begin{defi}
A Cartan geometry $\xi=(\mathbf P,\omega,\mathbf G,\mathbf H)$ on a manifold $\mathbf M$ modeled on $(\mathfrak{g},\mathfrak{h})$ with group $\mathbf H$ consists of the following data:
\begin{itemize}
\item a smooth manifold $\mathbf M$
\item a principal right $\mathbf H$ bundle $\mathbf P$ over $\mathbf M$.
\item\textit{ a $\mathfrak{g}$-valued $1$-form $\omega$ on $\mathbf P$ \rm{(}called Cartan connection\rm{)} which satisfies the following conditions:}
\begin{enumerate}
\item \textit{for each point $p\in \mathbf P$, the linear map $\omega_p:T_p\mathbf P\to\mathfrak{g}$ is a linear isomorphism;}
\item \textit{$(\Phi_h)^{\star}\omega=\rm{ad}_{h^{-1}}\omega$ for all $h\in \mathbf H$; }
\item \textit{$\omega(\varrho(v))=v$ for all $v\in\mathfrak{h}$ with $\varrho(v)$ the associated vector field.}
\end{enumerate}
\end{itemize}
\end{defi}
The curvature of a Cartan connection $\omega$ is defined analogously and verifies the relation
$$\Omega=d\omega+\omega\wedge\omega,$$
when $\omega$ is a matrix of one-forms. If it vanishes the model geometry is called flat.
\begin{rem}
A Cartan connection in $\mathbf P$ is not a connection in the usual sense since it is not $\mathfrak{h}$-valued. It can however be considered as a connection in a bundle associated to the bundle $\mathbf P$.
\end{rem}
\subsection{Connections in some Locally Minkowski spaces}
In this part we restrict attention to Finsler spaces as defined in subsection \ref{u}.

\noindent In many ways, Finsler geometry is analogous to Riemannian geometry. However, a typical difference is that Finsler geometry's objects exist on $T\mathbf M$ whereas in Riemann geometry they exist on $\mathbf M$. For example, in Finsler geometry, curvature is a tensor on $T\mathbf M\setminus \{0\}$, wheresas in Riemannian geometry, it is a tensor on $\mathbf M$. For this reason we need to study vectors and co-vectors on $T\mathbf M$, that is, elements in $T(T\mathbf M\setminus \{0\})$ and $T(T^\star\mathbf M\setminus \{0\})$. In this section we define the horizontal-vertical decomposition in $T(T\mathbf M\setminus \{0\})$ and $T(T^\star\mathbf M\setminus \{0\})$. This decomposition will greatly simplify calculations in local coordinates. It will also give a certain structure compatible with the Finsler metric. For example, the tangent vector to a geodesic will be a horizontal vector. Also, the derivative of $F$ will be a vertical co-vector. In order to introduce the horizontal-vertical decomposition, one needs a non-linear connection, that is one needs some structure on $T\mathbf M\setminus \{0\}$.
\begin{defi}[Non-linear connection]
A non-linear connection on a manifold $\mathbf M$ is a collection of locally defined $1$-homogeneous functions $N_j^i$ on $T\mathbf M\setminus \{0\}$ satisfying the transformation rules
$$\displaystyle \dfrac{\partial \tilde{x}^j}{\partial x^i}\tilde{N}^h_j=\dfrac{\partial \tilde{x}^h}{\partial x^i}N^j_i-\dfrac{\partial^2 \tilde{x}^h}{\partial x^i\partial x^j}y^j.$$
\end{defi}
We recall that the transformation rules of the coefficient $G^i$ of the geodesic spray is given by 
\begin{equation}
\label{4000}
\tilde{G}^r=\dfrac{\partial\tilde{x}^r}{\partial x^i}G^i-\dfrac{1}{2}\dfrac{\partial^2\tilde{x}^r}{\partial x^i\partial x^s}y^iy^s.
 \end{equation}
 Therefore if we take the derivative of $G^i$ with respect to $y^i$:
 \begin{equation}\label{4001}
N^i_j=\dfrac{\partial G^i}{\partial y^j}
\end{equation}
 we see that the $N_j^i$ are the coefficients for a non-linear connection.
 
\subsubsection{Decomposition of $T(T\mathbf M\setminus \{0\})$}
We recall that under a change of coordinates $\dfrac{\partial}{\partial x^i}$ and $\dfrac{\partial}{\partial y^i}$ vary in the following way
\begin{equation}
\label{4002}
\displaystyle \dfrac{\partial}{\partial x^i}|_y=\dfrac{\partial\tilde{x}^r}{\partial x^i}\dfrac{\partial}{\partial \tilde{x}^r}|_y+\dfrac{\partial^2\tilde{x}^r}{\partial x^i\partial x^s}y^s\dfrac{\partial}{\partial \tilde{y}^r}|_y
\end{equation} 
and
\begin{equation}
\label{4003}
\displaystyle \dfrac{\partial}{\partial y^i}|_y=\dfrac{\partial\tilde{x}^r}{\partial x^i}\dfrac{\partial}{\partial \tilde{y}^r}|_y.
\end{equation}
Therefore the vector space $\spa\left\{\dfrac{\partial}{\partial x^i}|_y,\, 1,\ldots,n\right\}$ depends on local coordinates. However, if $\mathbf M$ is equipped with a non-linear connection $N_j^i$, and we let
$$\dfrac{\delta}{\delta x^i}|_y=\dfrac{\partial}{\partial x^i}-N_i^k(y)\dfrac{\partial}{\partial y^k}|_y\in T(T\mathbf M\setminus \{0\})$$
then
$$\dfrac{\delta}{\delta x^i}|_y=\dfrac{\partial\tilde{x}^r}{\partial x^i}\dfrac{\delta}{\delta \tilde{x}^r}|_y.$$
Thus the $2n$-dimensional vector space $T_y(T\mathbf M\setminus\{0\})$ has two $n$-dimensional subspaces,
$$\mathcal{V}_yT\mathbf M=\spa\left\{\dfrac{\partial}{\partial y^i}|_y\right\},\quad \mathcal{H}_yT\mathbf M=\spa\left\{\dfrac{\delta}{\delta x^i}|_y\right\},$$
and these are independent of local coordinate coordinates . Let us also define
$$\displaystyle\mathcal{V}T\mathbf M=\bigcup_{y\in T\mathbf M\setminus \{0\}} \mathcal{V}_yT\mathbf M ,\quad \mathcal{H}T\mathbf M=\bigcup_{y\in T\mathbf M\setminus \{0\}} \mathcal{H}_yT\mathbf M .$$
Thence pointwise
$$T(T\mathbf M\setminus \{0\})=\mathcal{V}T\mathbf M\oplus\mathcal{H}T\mathbf M.$$
Vectors in $\mathcal{V}T\mathbf M$ are called vertical vectors and vectors in $\mathcal{H}T\mathbf M$ are called horizontal vectors.
\subsubsection{Decomposition of $T^\star(T\mathbf M\setminus \{0\})$}
In the same way one can show that dually the co-vectors
$$\delta y^i|_y=dy^i|_y+N_j^i(y)dx^j|_y$$
and
$$dx^i|_y$$
span canonical subspaces of $T_y^\star(T\mathbf M\setminus \{0\})$ 
$$\mathcal{V}_y^\star T\mathbf M=\spa\left\{\delta y^i|_y\right\},\quad \mathcal{H}_y^\star T\mathbf M=\spa\left\{dx^i|_y\right\}.$$
Moreover we have a canonical decomposition 
$$T^\star(T\mathbf M\setminus \{0\})=\mathcal{V}^\star T\mathbf M\oplus\mathcal{H}^\star T\mathbf M.$$
Co-vectors in $\mathcal{V}^\star T\mathbf M$ are called vertical co-vectors and those in $\mathcal{H}^\star T\mathbf M$ are called horizontal co-vectors.
\subsection{Finsler connections}
\begin{defi}[Finsler connection] A Finsler connection is determined by a triple $(N,H,C)$ where $N$ is a non-linear connection on $\mathbf M$ and $H=(H^i_{jk})$, $C=(C^i_{jk})$ are collections of locally defined $0$-homogeneous functions  $H^i_{jk}, C^i_{jk}:T\mathbf M\setminus \{0\}\to\R$ satisfying the transformation rules
\begin{equation}
\label{4004}
\dfrac{\partial \tilde{x}^l}{\partial x^i}H^i_{jk}=\dfrac{\partial^2\tilde{x}^l}{\partial x^j\partial x^k}+\dfrac{\partial \tilde{x}^r}{\partial x^j}\dfrac{\partial \tilde{x}^s}{\partial x^k}\tilde{H}^l_{rs},
\end{equation}
\begin{equation}
\label{4005}
C_{jk}^i=\dfrac{\partial x^i}{\partial \tilde{x}^p}\dfrac{\partial \tilde{x}^q}{\partial x^j}\dfrac{\partial \tilde{x}^r}{\partial x^k}\tilde{C}^p_{qr}.
\end{equation}
Let $\pi:T\mathbf M\ \mathbf M$ be the canonical projection. The Finsler connection induced by $(N,H,C)$ is the mapping 
$$\nabla:T_y(T\mathbf M\setminus\{0\})\times\mathcal{X}(T\mathbf M)\to T_{\pi(y)},\quad(Y,X)\mapsto\nabla_y(Y)$$
satisfying the following properties
\begin{enumerate}
\item $\nabla$ is linear over $\R$ in $X$ and $Y$.\\
\item If $f\in \mathscr{C}^\infty(M)$ and $y\in T\mathbf M\setminus\{0\}$, then in local coordinates
\begin{equation*}
\begin{split}
&\nabla_{\frac{\delta}{\delta x^i}|_y}\left(f\dfrac{\partial}{\partial x^j}|_x\right)=df\left(\dfrac{\partial}{\partial x^i}|_x\right)f\dfrac{\partial}{\partial x^j}|_x+fH^m_{ij}\dfrac{\partial}{\partial x^m}|_x\\
&\nabla_{\frac{\partial}{\partial y^i}|_y}\left(f\dfrac{\partial}{\partial x^j}|_x\right)=fC^m_{ij}\dfrac{\partial}{\partial x^m}|_x\end{split}
\end{equation*}
\end{enumerate}
\end{defi}
\begin{ex}
Let $(\mathbf M, L)$ be a Finsler space, and let 
\begin{equation*}\begin{split}\Gamma_{ijk}&=\dfrac{1}{2}\left(\dfrac{\delta g_{ik}}{\delta x^j}+\dfrac{\delta g_{ij}}{\delta x^k}-\dfrac{\delta g_{jk}}{\delta x^i}\right)\\
\Gamma^i_{jk}&=g^{ir}\Gamma_{ijk},
\end{split}
\end{equation*}
be locally defined functions. Then $(N_j^i,\Gamma_{jk}^i , 0)$ is the Chern-Rund connection.
\end{ex}
\begin{ex}
We recall that on a Finsler manifold the Cartan tensor $C_{ijk}$ is defined as 
\begin{equation*}
\begin{split}
C_{ijk}&=\dfrac{1}{2}\dfrac{\partial g_{ij}}{\partial y^k}=\dfrac{1}{4}\dfrac{\partial^3 L^2}{\partial y^i\partial y^j\partial y^k},\\
C_{jk}^l&=g^{li}C_{ijk}.
\end{split}
\end{equation*}
The Cartan connection is the Finsler connection $(N_j^i,\Gamma_{jk}^i , C_{jk}^i)$.
\end{ex}
\subsection{Covariant derivative and Curvature}
\begin{defi}[Covariant derivative]
Suppose $\mathbf  M$ is a manifold with a non-linear connection $N_j^i$. The the covariant derivative (induced by $N_j^i$) is a mapping
$$D:T_x\mathbf M\setminus\{0\}\times \mathcal{X}(\mathbf M)\to T_x\mathbf M\setminus\{0\},\quad (y,X)\mapsto D_y(X)$$
determined by the following properties
\begin{enumerate}
\item $D_y(X+Y)=D_y(X)+D_y(Y)$ for all $X,\,Y\in\mathcal{X}(\mathbf M)$ and $y\in T_\mathbf M\setminus\{0\}$,\\
\item $D_y(fX)=df(y)X+fD_y(X)$ for all $X\in \mathcal{X}(\mathbf M)$ and $f\in \mathcal{C}^\infty(\mathbf M)$,\\
\item in local coordinates , $D_y\left(\dfrac{\partial}{\partial x^i}|_x\right)=N^j_i(y)\dfrac{\partial}{\partial x^j}|_x$ for all $y\in T_x\mathbf M\setminus\{0\} $.
\end{enumerate}
\end{defi}
In Riemannian geometry, the Riemann curvature is a tensor on $M$. We next derive an analogous curvature tensor in a Finslerian setting, which will be a tensor on $T\mathbf M\setminus\{0\}$.
\begin{defi}[Curvature]
Let us set 
$$R^m_{jk}=\dfrac{\delta N_k^m}{\delta x^j}-\dfrac{\delta N_j^m}{\delta x^k}$$
and 
$$R^m_{ijk}=\dfrac{\partial R^m_{jk}}{\partial y^i}.$$
Then the curvature tensor $R$ is the $(1,3)$-tensor defined as 
$$R=R^m_{ijk}dx^i\otimes dx^j\otimes dx^k\otimes \dfrac{\delta}{\delta x^m}.$$
\end{defi}

\end{document}